\newcommand{\ra}{\rightarrow}
\newcommand{\g}{\gamma}
\newcommand{\Om}{\Omega}
\newcommand{\lb}{\lambda}
\newcommand{\la}{\lambda}
\newcommand{\Lb}{\Lambda}
\newcommand{\F}{\mathcal{F}}
\newcommand{\mean}[1]{\,-\hskip-1.08em\int_{#1}} 
\newcommand{\meantext}[1]{\,-\hskip-0.88em\int_{#1}} 
\newcommand{\au}{\boldsymbol u_\alpha}
\newcommand{\av}{\boldsymbol v_\alpha}
\def\ds{\displaystyle}
\def\eps{{\varepsilon}}
\def\N{\mathbb{N}}
\def\R{\mathbb{R}}
\def\F{\mathcal{F}}
\def\H{\mathcal{H}}
\def\Dr{D}
\newcommand{\be}{\begin{equation}}
\newcommand{\ee}{\end{equation}}
\newcommand{\bib}[4]{\bibitem{#1}{\sc#2: }{\it#3. }{#4.}}
\newcommand{\cp}{\mathop{\rm cap}\nolimits}
\newcommand{\ind}{\mathbbm{1}}
\numberwithin{equation}{section}
\theoremstyle{plain}
\newtheorem{teo}{Theorem}[section]
\newtheorem{lemma}[teo]{Lemma}
\newtheorem{cor}[teo]{Corollary}
\newtheorem{prop}[teo]{Proposition}
\newtheorem{deff}[teo]{Definition}
\newtheorem{oss}[teo]{Remark}
\title{Lipschitz regularity of the eigenfunctions on optimal domains}
\author{Dorin Bucur, Dario Mazzoleni, Aldo Pratelli, Bozhidar Velichkov}
\begin{document}

\maketitle

\begin{abstract}
We study the optimal sets $\Omega^\ast\subset\R^d$ for spectral functionals $F\big(\lambda_1(\Omega),\dots,\lambda_p(\Omega)\big)$, which are bi-Lipschitz with respect to each of the eigenvalues $\lambda_1(\Omega),\dots,\lambda_p(\Omega)$ of the Dirichlet Laplacian on $\Omega$, a prototype being the problem 
$$ \min{\big\{\la_1(\Om)+\dots+ \la_p(\Omega)\;:\;\Om\subset\R^d,\ |\Omega|=1\big\}}. $$ 
We prove the Lipschitz regularity of the eigenfunctions $u_1,\dots,u_p$ of the Dirichlet Laplacian on the optimal set $\Om^*$ and, as a corollary, we deduce that $\Om^*$ is open.
For functionals depending only on a generic subset of the spectrum, as for example $\lambda_k(\Omega)$ or $\lambda_{k_1}(\Omega)+\dots+\lambda_{k_p}(\Omega)$ , our result proves only the existence of a Lipschitz continuous eigenfunction in correspondence to each of the eigenvalues involved.
\end{abstract}

\vspace{1cm}

\section{Introduction}
In this paper we study the domains of prescribed volume, which are optimal for functionals depending on the spectrum of the Dirichlet Laplacian. Precisely, we consider shape optimization problems of the form
\begin{equation}\label{intro130930-1}
\min\Big\{F\big(\lambda_1(\Omega),\dots,\lambda_p(\Omega)\big)\,:\,\Omega\subset\R^d,\ |\Omega|=1\Big\},
\end{equation}
where $F:\R^p\to\R$ is a given continuous function, increasing in each variable, and $\lambda_k(\Omega)$, for $k=1,\dots,p$, denotes the $k^{th}$ eigenvalue of the Dirichlet Laplacian on $\Omega$, i.e. the $k^{th}$ element of the spectrum\footnote{We recall that due to the volume constraint $|\Omega|=1$ the Dirichlet Laplacian on $\Omega$ has compact resolvent and its spectrum is dicrete.} of the Dirichlet Laplacian.

The optimization problems of the form \eqref{intro130930-1} naturally arise in the study of physical phenomena as, for example, heat diffusion or wave propagation inside a domain $\Omega\subset\R^d$. Despite of their simple formulation, these problems turn out to be quite challenging and their analysis usually depends on sophisticated variational techniques. Even the question of the existence of a minimizer for the simplest spectral optimization problem  
\begin{equation}\label{bmpvi1}
\min\left\{\lambda_k(\Omega)\,:\ \Omega\subset\R^d,\ |\Omega|=1\right\},
\end{equation}
was answered only recently for general $k\in\N$ (see \cite{bulbk} and \cite{mp}).
This question was first formulated in the $19$th century by Lord Rayleigh in his treatise \emph{The Theory of Sound} \cite{rayleigh} and it was related to the specific case $k=1$.  It was proved only in the 1920s by Faber and Krahn that the minimizer in this case is the ball. From this result one can easily deduce the Krahn-Szeg\"o inequality, which states that a union of two disjoint balls is optimal for \eqref{bmpvi1} with $k=2$, i.e. it has the smaller second eigenvalue $\lambda_2$ among all sets of prescribed measure. An explicit construction of an optimal set for higher eigenvalues is an extremely difficult task. Balls are not always optimal, in fact it was proved by Keller and Wolf in 1994 (see \cite{kewo}) that a union of disjoint balls is not optimal for $\lambda_{13}$ in two dimensions. It was recently proved by Berger and Oudet\footnote{private communication} that the later result holds for all $k\in\N$ large enough, which confirmed the previous numerical results obtained in \cite{ou04} and \cite{anfr12}.

The classical variational approach of proving existence and regularity of minimizers failed to provide a solution to the spectral problems \eqref{intro130930-1} until the $1990$s, the main reason being the lack of an appropriate topology on the space of domains $\Omega\subset\R^d$. A suitable convergence called \emph{$\gamma$-convergence} was introduced by Dal Maso and Mosco (see \cite{dmmo86, dmmo87}) in the 1980s and was used by Buttazzo and Dal Maso (see \cite{budm93}) for proving in 1993 a very general existence result for \eqref{intro130930-1}, under the additional constraint $\Omega\subset\Dr$. The presence of the open bounded set $\Dr\subset\R^d$ as a geometric obstacle (a \emph{box}) provided the necessary compactness, needed to obtain the existence of an optimal domain in the class of \emph{quasi-open}\footnote{A quasi-open set is a level set $\{u>0\}$ of a Sobolev function $u\in H^1(\R^d)$} sets. The proof of existence of a quasi-open minimizer for \eqref{bmpvi1} and, more generally, of \eqref{intro130930-1} in the entire space $\R^d$ was concluded in 2011 with the independent results of Bucur (see \cite{bulbk}) and Mazzoleni and Pratelli (see \cite{mp}). Moreover, it was proved that the optimal sets are bounded (see \cite{bulbk} and \cite{mbound}) and of finite perimeter (see \cite{bulbk}).

The regularity of the optimal sets or the corresponding eigenfunctions turned out to be quite difficult question, due to the min-max nature of the spectral cost functionals, and is an open problem since the general Buttazzo-Dal Maso existence theorem. The only result that provides the complete regularity of the free boundary $\partial\Omega$ of the optimal set $\Omega$ concerns only the minimizers of \eqref{bmpvi1}\footnote{under the additional constraint $\Omega\subset\Dr$, where $\Dr\subset\R^d$ is a bounded open set.} in the special case $k=1$ and is due to Brian\c{c}on and Lamboley (\cite{brla}) who proved that the free boundary of the optimal sets is smooth. The implementation of this result for higher eigenvalues presents some major difficulties since the techniques, developed by Alt and Caffarelli in \cite{altcaf}, used in the proof are exclusive for functionals defined through a minimization and not min-max procedure on the Sobolev space $H^1_0(\Omega)$.

   In this paper we study the regularity of the eigenfunctions (or state functions) on the optimal set $\Omega^\ast$ for the problem \eqref{bmpvi1}. Our main tool is a result proved by Brian\c{c}on, Hayouni and Pierre (\cite{bhp05}), inspired by the pioneering work of  Alt and Caffarelli (see \cite{altcaf}) on the regularity for a free boundary problem. It states that a function $u\in H^1(\R^d)$, satisfying an elliptic PDE on the set $\Omega=\{|u|>0\}$, is Lipschitz continuous on the whole $\R^d$, if it satisfies the following \emph{quasi-minimality} property:
\begin{equation}\label{bmpvi50}
\int_{\R^d}|\nabla u|^2\,dx\le \int_{\R^d}|\nabla v|^2\,dx+cr^d,\qquad\forall v\in H^1(\R^d)\ \ \hbox{s.t.}\ u=v\ \hbox{on}\  \R^d\setminus B_r(x),
\end{equation}
for every ball $B_r(x)\subset\R^d$. 

   Since the variational characterization of the eigenvalue $\lambda_k$ is given through a min-max procedure the transfer of the minimality properties of $\Omega$ to an eigenfunction $u_k$ is a non-trivial task. In fact, it can be proved that the eigenfunction $u_k$ is a quasi-minimizer in the sense of \eqref{bmpvi50}, provided that the eigenvalue $\lambda_k(\Omega^\ast)$ is simple. Since the latter is expected not to be true in general, we use an approximation procedure with sets $\Omega_\eps$, which are solutions of a spectral optimization problems of the form 
$$\min\Big\{(1-\eps)\lambda_k(\Omega)+\eps\lambda_{k-1}(\Omega)+c|\Omega|:\ \Omega^\ast\subset\Omega\subset\R^d\Big\}.$$   
We study the Lipschitz continuity of the eigenfunctions $u_k^\eps$ on each $\Omega_\eps$ and then pass to the limit to recover the Lipschitz continuity of $u_k$ on $\Omega^\ast$ (see Theorem \ref{thk}). The uniformity of the Lipschitz constants is assured, roughly speaking, by the optimality condition on the free boundary of $\Omega_\eps$, which, in the case of regular $\Omega_\eps$ and simple eigenvalues, reads as
$$(1-\eps)|\nabla u_k^\eps|^2+\eps|\nabla u^\eps_{k-1}|^2=c.$$  
 
   The main result of the paper is Theorem \ref{thf.1}, which applies to shape supersolutions of functionals of the form
  $$\Omega\mapsto F\big(\lambda_{k_1}(\Omega),\dots,\lambda_{k_p}(\Omega)\big)+|\Om|,$$ 
where $F:\R^p\to\R$ is increasing and bi-Lipschitz in each variable.   Precisely, if  a set $\Om^*$ satisfies
$$F\big(\lb_{k_1}(\Omega^*), \dots, \lb_{k_p}(\Omega^*)\big)+|\Om^*|\le  F\big(\lb_{k_1}(\Omega), \dots, \lb_{k_p}(\Omega)\big)+ |\Om|, $$ 
for all measurable sets $\Om$ containing $\Om^*$,
then there exists a family of $L^2$-orthonormal eigenfunctions $u_{k_1},\dots,u_{k_p}$, corresponding respectively to $ \lambda_{k_1}(\Omega),\dots,\lambda_{k_p}(\Omega)$, which are Lipschitz continuous on $\R^d$.

In some particular cases, as for example linear combinations of the form
$$F\big( \lb_1(\Om),\dots, \lb_p(\Om)\big)=\sum_{i=1}^p \alpha_i \lb_i(\Om),$$
with strictly positive $\alpha_i$, for every $i=1, \dots, p$, the minimizers are moreover proved to be open sets (see Corollary \ref{funsum}), since in this case there exists an open set $\Om^{**}\subseteq\Om^*$ which has the same eigenvalues of $\Om^*$ up to order $p$.
For this easier case, in two dimensions, it is also possible to give a more direct proof which does not rely on the Alt-Caffarelli regularity techniques (see \cite{phdm}).


In \cite{bulbk}, the analysis of shape subsolutions gave some qualitative information on the optimal sets, in particular their boundedness and finiteness of the perimeter. Nevertheless, it is known that a subsolution may not be equivalent to an open set. Continuity of the state functions in free boundary problems relies, in general, on outer perturbations. Consequently the study of supersolutions became a fundamental target, which is partially attained in this paper.     In the case of subsolutions, the problem could be reduced to the analysis of a unique state function, precisely the torsion function, by controlling the variation of the $k^{th}$ eigenvalue for an inner geometric domain perturbation with the variation of the torsional rigidity. As far as we know, an analogous approach for the analysis of shape supersolutions can not be performed since one can not control the variation of the torsional rigidity by the variation of the $k^{th}$ eigenvalue. 

\medskip

This paper is organized as follows: in Section~\ref{prel} we recall some tools about Sobolev-like spaces, capacity and $\gamma$-convergence; in Section~\ref{quasimin} we deal with the Lipschitz regularity for quasi-minimizers of the Dirichlet energy and then, in Section~\ref{gap}, we apply these results to eigenfunctions of the Dirichlet Laplacian corresponding to a simple eigenvalue.
Then in Section~\ref{supersol} we introduce the notion of shape supersolution and we prove our main results Theorem \ref{thk} and Theorem \ref{thf.1}, concerning the Lipschitz regularity of the eigenfunctions associated to the general problem~\eqref{intro130930-1}. 
At last, in Section~\ref{openness}, we show that for some functionals we are able to prove that optimal sets are open.

\section{Preliminary results}\label{prel}
In what follows, we will use the following notations and conventions: 
\begin{itemize}
\item $C_d$ denotes a constant depending only on the dimension $d$ and if it is not specified it might change from line to line;
\item $\omega_d$ denotes the volume of the unit ball in $\R^d$ and thus $d\omega_d$ is the area of the unit sphere;
\item $\H^m$ denotes the $m$-dimensional Hausdorff measure in $\R^d$;
\item if the domain of integration is not specified, then it is assumed to be the whole space $\R^d$;
\item we denote the mean value of a function $u:\Omega\to\R$ with 
$$\mean{\Omega}u\,dx:=\frac{1}{|\Omega|}\int_\Omega u\,dx.$$ 
\end{itemize}

\subsection{Sobolev spaces and spectral minimizers}

Let $H\subset H^1(\R^d)$ be a closed linear subspace of $H^1(\R^d)$ such that the embedding $H\subset L^2(\R^d)$ is compact. We define the spectrum of the Laplace operator $-\Delta$ on $H$ as 
$$\lambda_k(H)=\min_{S_k}\max_{u\in S_k}\frac{\int|\nabla u|^2\,dx}{\int u^2\,dx},$$
where the minimum is over all $k$-dimensional subspaces $S_k$ of $H$. In the case when $H=H^1_0(\Omega)$, where $\Omega$ is an open set of finite measure, we use the usual notation $\lambda_k(\Omega):=\lambda_k(H^1_0(\Omega))$ and thus the $k^{th}$ eigenvalue of the Dirichlet Laplacian $\lambda_k$ on $\Omega$ can be seen as a functional on the open sets $\Omega\subset\R^d$. In this paper we are interested in the regularity of the eigenfunctions on the sets $\Omega$ which are minimal with respect to exterior perturbations, i.e.
\begin{equation}\label{mprbmpv}
F\big(\lambda_1(\Omega),\dots,\lambda_k(\Omega)\big)\le F\big(\lambda_1(\widetilde\Omega),\dots,\lambda_k(\widetilde\Omega)\big)+c|\widetilde\Omega\setminus\Omega|,\qquad\forall \widetilde\Omega\supset\Omega,
\end{equation}
where $F$ is a given function in $\R^k$, increasing in each variable. This is a property satisfied, for example, from the spectral minimizers, solution of the problem
\begin{equation}\label{minlbkbmpv}
\min\left\{\lambda_k(\Omega)+|\Om|:\ \Omega\subset\R^d\right\}.
\end{equation}
Since, at the moment, the problem \eqref{minlbkbmpv} is known to have solution only in the wider class of quasi-open sets (see \cite{budm93,bulbk,mp}), we extend the definition of $\lambda_k$ to a wider class of sets. Indeed, for every measurable $\Omega\subset\R^d$, we define the Sobolev space $H^1_0(\Omega)$ as
\begin{equation}\label{capsob}
H^1_0(\Omega)=\Big\{u\in H^1(\R^d):\ \cp(\{u\neq 0\}\setminus\Omega)=0\Big\},
\end{equation}
where for every $E\subset\R^d$ the capacity of $E$ is defined as
$$\cp(E)=\min\left\{\|v\|^2_{H^1(\R^d)}\;:\;v\in H^1(\R^d),\;v\geq 1 \mbox{  a.e. in a neighborhood of }E\right\}.$$
In the case when $\Omega$ is an open set the space defined in \eqref{capsob} coincides with the Sobolev space $H^1_0(\Om)$ defined as the closure of $C^\infty_c(\Omega)$ with respect to the norm $\|\cdot\|_{H^1}$. 
We say that the set $\Omega$ is quasi-open, if it is a level set $\Omega=\{u>0\}$ of a Sobolev function $u\in H^1(\R^d)$. In every measurable $\Omega$, there is a largest quasi-open set $\omega\subset\Omega$ (defined up to a set of zero capacity), which is also such that $H^1_0(\omega)=H^1_0(\Omega)$. 

In this paper we will deal mainly with the Sobolev-like spaces $\widetilde H^1_0(\Omega)$, defined for every measurable $\Omega\subset\R^d$ as 
\begin{equation}\label{meassob}
\widetilde H^1_0(\Omega)=\Big\{u\in H^1(\R^d):\ \big|\{u\neq 0\}\setminus\Omega\big|=0\Big\}.
\end{equation}
The inclusion $H^1_0(\Omega)\subset\widetilde H^1_0(\Omega)$ always holds, while the equality is achieved for open sets with Lipschitz boundary (see, for example, \cite{deve}) and it is not hard to construct open sets for which this equality is false (for example a ball minus one of its diameters). Thus we have $\lambda_k(\widetilde H^1_0(\Omega))\le \lambda_k(H^1_0(\Omega))$. Moreover, for every measurable $\Omega$ there is a largest quasi-open set $\omega$ such that $\omega\subset\Omega$ a.e. and $H^1_0(\omega)=\widetilde H^1_0(\omega)=\widetilde H^1_0(\Omega)$. Thus, for every set $\Omega$ satisfying \eqref{mprbmpv} there is a quasi-open set $\omega$ such that $\omega=\Omega$ a.e. and $H^1_0(\omega)=H^1_0(\Omega)$. Moreover, $\omega$ also satisfies \eqref{mprbmpv} with the functional $\lambda_k$ defined as 
\begin{equation}\label{lambdakdef}
\lambda_k(\Omega):=\lambda_k(\widetilde H^1_0(\Omega)),\ \hbox{for every}\ \Omega\subset\R^d. 
\end{equation}
From now on, we will use \eqref{lambdakdef} as a definition for $\lambda_k$. The main reason we use this definition is that, if the set of finite measure $\Omega$ satisfies \eqref{mprbmpv}, then for every $\eps>0$, $\Omega$ is the unique solution of
\begin{equation}\label{mprbmpv1}
\min\left\{F\big(\lambda_1(\widetilde\Omega),\dots,\lambda_k(\widetilde\Omega)\big)+(c+\eps)|\widetilde\Omega|:\ \widetilde\Omega\supset\Omega\right\}.
\end{equation}
Indeed, one can easily check that if $\Omega_1$ is a solution of \eqref{mprbmpv1}, then $|\Omega_1\Delta\Omega|=0$ and so, $\widetilde H^1_0(\Omega_1)=\widetilde H^1_0(\Omega)$.

\subsection{PDEs and eigenfunctions on measurable sets}
Let $\Omega\subset\R^d$ be a set of finite Lebesgue measure and $f\in L^2(\Omega)$. We say that the function $u$ satisfies the equation 
\begin{equation}\label{eleqbmpv}
-\Delta u=f,\qquad u\in\widetilde H^1_0(\Omega),
\end{equation}
if, for every $v\in \widetilde H^1_0(\Omega)$, we have 
$$\langle\Delta u+f,v\rangle:=-\int_{\R^d}\nabla u\cdot\nabla v\,dx+\int_{\R^d}fv\,dx=0,$$
or analogously, if $u\in \widetilde H^1_0(\Omega)$ is the minimizer of 
$$\min\left\{\frac12\int_\Omega |\nabla v|^2\,dx-\int_\Omega vf\,dx:\ v\in\widetilde H^1_0(\Omega)\right\}.$$

If $u$ is a solution of \eqref{eleqbmpv}, then there is a signed Radon measure $\mu$ such that for every $v\in H^1(\R^d)$, we have 
$$\langle\Delta u+f,v\rangle=\int_{\R^d} v\,d\mu.$$
In particular, $\Delta u$ is a signed Radon measure on $\R^d$. Indeed, if $u\ge 0$, then the functional $\Delta u+f$ (defined on $H^1(\R^d)$) is positive. Applying the Riesz's Theorem we obtain the existence of the measure $\mu$, which in this case is positive and finite on the compact sets (i.e. $\mu$ is a Radon measure). In the general case, we consider the functions $u^+=\sup\{u,0\}$ and $u^-=\sup\{-u,0\}$. Each of the functionals $\Delta u^++fI_{\{u>0\}}$ and $\Delta u^-+fI_{\{u<0\}}$ is positive and so, there are measures $\mu_1$ and $\mu_2$ such that $\mu_1=\Delta u^++fI_{\{u>0\}}$ and $\mu_2=\Delta u^-+fI_{\{u<0\}}$. Thus, we have that the signed measure $\mu=\mu_1-\mu_2$ is such that $\mu=\Delta u+f$. Moreover, we have that:
\begin{enumerate}
\item the support $supp(\mu)$ of $\mu$ is contained in the topological bondary $\partial\Omega$ of $\Omega$,
\item the measure $\mu$ is \emph{capacitary}, i.e. for each set $E$ of zero capacity, $\mu(E)=0$.
\end{enumerate}

 If $u\in H^1(\R^d)$ is a solution of \eqref{eleqbmpv} with $f\in L^\infty(\Omega)$, then every point $x_0\in\R^d$ is a Lebesgue point for $u$, i.e.
$$u(x_0)=\lim_{r\to0}\mean{B_r(x_0)}{u(x)\, dx}.$$ 
Moreover, the following result was proved in \cite{bhp05}.
\begin{prop}\label{bhpprop}
Suppose that $\Omega\subset\R^d$ is a set of finite Lebesgue measure and  $f\in L^\infty(\Omega)$. If $u\in \widetilde H^1_0(\Omega)$ is a solution of \eqref{eleqbmpv}, then for every $x\in\R^d$,
\begin{equation}
u(x)=\lim_{r\to0}\mean{\partial B_{r}(x)}{u\,d\H^{d-1}},
\end{equation}
and, for every $R>0$,
\begin{equation}\label{pwde4}
\mean{\partial B_{R}(x)}u\,d\H^{d-1}-u(x)=\frac{1}{d\omega_d}\int_{0}^{R} r^{1-d}\Delta u(B_r(x))\,dr.
\end{equation}
\end{prop}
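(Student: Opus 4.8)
The plan is to read \eqref{pwde4} as the Green representation formula for $u$ on the ball $B_R(x)$, the only preliminary input being that $\Delta u$ is a signed Radon measure. By the discussion preceding the statement, $\Delta u=\mu-f\,\ind_\Omega=:\nu$ in the sense of distributions on $\R^d$, where $f$ is extended by $0$ outside $\Omega$ and $\mu$ is a capacitary signed measure supported on $\partial\Omega$; in particular $\nu$ is a signed Radon measure. First I would record that the Green function $G_R^x$ of $B_R(x)$ with pole $x$ (i.e. $-\Delta G_R^x=\delta_x$ in $B_R(x)$, $G_R^x=0$ on $\partial B_R(x)$) can be written, uniformly in the dimension, as
\be\label{greensketch}
G_R^x(y)=\frac{1}{d\omega_d}\int_{|y-x|}^{R}r^{1-d}\,dr\quad\text{for }y\in B_R(x),\qquad G_R^x\equiv0\ \text{ on }\ \R^d\setminus B_R(x),
\ee
using $|y-x|^{2-d}-R^{2-d}=(d-2)\int_{|y-x|}^{R}r^{1-d}\,dr$ when $d\ge3$ and $\log(R/|y-x|)=\int_{|y-x|}^{R}r^{1-d}\,dr$ when $d=2$. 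Plugging \eqref{greensketch} into the Green identity $\ds\mean{\partial B_R(x)}u\,d\H^{d-1}-u(x)=\int_{B_R(x)}G_R^x\,d\nu$ and applying Tonelli's theorem to $\nu^+$ and $\nu^-$ (both finite on $B_R(x)$) turns it precisely into \eqref{pwde4}, because $\int_{B_R(x)}G_R^x\,d\nu=\frac{1}{d\omega_d}\int_0^R r^{1-d}\nu(B_r(x))\,dr$. So everything reduces to proving the Green identity for our (merely $H^1$, measure-Laplacian) function $u$.

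For this I would mollify: set $u_\ep=u\ast\rho_\ep\in C^\infty(\R^d)$, so that $\Delta u_\ep=\nu\ast\rho_\ep=:\nu_\ep$ is a smooth function, and use the classical Green representation formula for smooth functions,
\be\label{molgreen}
\mean{\partial B_R(x)}u_\ep\,d\H^{d-1}-u_\ep(x)=\int_{B_R(x)}G_R^x(y)\,\nu_\ep(y)\,dy .
\ee
Then I would let $\ep\to0$ in \eqref{molgreen}. Since $f\in L^\infty$, every point of $\R^d$ is a Lebesgue point of $u$ (recalled just above the statement), hence $u_\ep(x)\to u(x)$; since $u_\ep\to u$ in $H^1_{loc}(\R^d)$, the traces on the fixed sphere $\partial B_R(x)$ converge in $L^2(\partial B_R(x))$, so $\ds\mean{\partial B_R(x)}u_\ep\to\mean{\partial B_R(x)}u$; and since $\nu^{\pm}\ast\rho_\ep\rightharpoonup\nu^{\pm}$ weakly-$*$, writing $\nu_\ep=\nu^+\ast\rho_\ep-\nu^-\ast\rho_\ep$, truncating $G_R^x$ at a large height $M$ (which makes $\min\{G_R^x,M\}\ind_{B_R(x)}$ a bounded continuous function with compact support), passing to the limit in $\ep$, and then sending $M\to\infty$ by monotone convergence, one obtains $\int_{B_R(x)}G_R^x\,\nu_\ep\,dy\to\int_{B_R(x)}G_R^x\,d\nu$. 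This establishes the Green identity, hence \eqref{pwde4}, and in particular shows that its right-hand side is finite for every $R>0$.

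The remaining assertion $u(x)=\lim_{r\to0}\mean{\partial B_r(x)}u\,d\H^{d-1}$ then follows from \eqref{pwde4} itself: with $R$ replaced by $r$ it reads $\ds\mean{\partial B_r(x)}u\,d\H^{d-1}-u(x)=\int_{B_r(x)}G_r^x\,d\nu$, and since $0\le G_r^x\le G_R^x$ on $B_r(x)\subset B_R(x)$, since $G_r^x\,\ind_{B_r(x)}\to0$ pointwise on $B_R(x)\setminus\{x\}$ as $r\to0$, since $G_R^x\,\ind_{B_R(x)}\in L^1(|\nu|)$ by the previous step, and since $\nu(\{x\})=0$ ($\mu$ is capacitary and a point has zero capacity for $d\ge2$, while $f\,\ind_\Omega$ has no atoms), dominated convergence forces $\int_{B_r(x)}G_r^x\,d\nu\to0$.

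I expect the only real difficulty to be the passage to the limit in \eqref{molgreen} near the pole $x$: the contribution $\int_{B_\rho(x)}G_R^x\,d\nu_\ep$ of a small ball around $x$ must be shown small uniformly in $\ep$, which, via the pointwise bound $|\nu_\ep|\le|\nu|\ast\rho_\ep$, amounts to the finiteness of the Newtonian potential of $|\nu|$ at $x$. This is exactly where the hypotheses enter: locally one has $u=h+k\ast(f\,\ind_\Omega)-k\ast\mu$ with $k$ the fundamental solution and $h$ harmonic, the term $k\ast(f\,\ind_\Omega)$ is continuous because $f\in L^\infty$ and $|\Omega|<\infty$, and $h$ is smooth, so the finiteness of $u(x)$ (a Lebesgue point) forces $(k\ast\mu)(x)<\infty$; running the same argument for $u^+$ and $u^-$, which satisfy elliptic equations with $L^\infty$ data and to which the Lebesgue-point statement applies as well, controls the potentials of the positive and negative parts of $\mu$ separately. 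The case $d=2$ is handled identically, with the logarithmic kernel in place of $r^{2-d}$.
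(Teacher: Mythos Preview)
The paper does not actually prove Proposition~\ref{bhpprop}; it is quoted from \cite{bhp05} without argument, so there is no in-paper proof to compare your proposal against.

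That said, your approach is correct. The scheme---Green representation on $B_R(x)$, mollification to reduce to the smooth case, truncation of $G_R^x$ near the pole to pass to the limit, and then dominated convergence for the first assertion---is one of the standard routes to this identity, and you have isolated the one genuine technicality: the finiteness of the Newtonian potential of $|\Delta u|$ at every point. Your reduction of that point to $u^+$ and $u^-$ is the right mechanism; note that it works because $u^\pm\ge0$, so the subharmonic functions $u^\pm+\tfrac{\|f\|_\infty}{2d}|\cdot|^2$ are bounded below and their Riesz measures therefore have everywhere-finite potential. One small step you might make explicit is the uniform-in-$\ep$ bound on $\int_{B_\rho(x)}G_R^x\,|\nu_\ep|\,dy$: using a radial mollifier and the superharmonicity of the fundamental solution $k$ gives $k\ast\rho_\ep\le k$, hence $\int G_R^x\,(|\nu|\ast\rho_\ep)\,dy\le\int_{B_{R+1}(x)} k(\cdot-x)\,d|\nu|$, which is exactly the finite quantity you produced.

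A slightly more elementary alternative, which avoids the Green function, is to differentiate the spherical mean directly: for $u\in H^1$ with $\Delta u$ a Radon measure one has $\frac{d}{dr}\big(\meantext{\partial B_r(x)}u\big)=\frac{r^{1-d}}{d\omega_d}\,\Delta u(B_r(x))$ for a.e.\ $r$, so integrating from $\rho$ to $R$ gives \eqref{pwde4} with $u(x)$ replaced by $\meantext{\partial B_\rho(x)}u$; applying this to $u^\pm$ and using their nonnegativity forces the right-hand side to have a finite limit as $\rho\to0$, whence the spherical means converge, necessarily to the known ball-average limit $u(x)$. Both routes rest on the same finiteness-of-potential ingredient; yours packages it through $G_R^x$, this one through direct integration.
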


\begin{oss}
We note that the above propositions applies to the eigenfunctions of the Dirichlet Laplacian on $\Omega$. Indeed, if $u_k$ is a solution of 
$$-\Delta u_k=\lambda_k(\Omega)\,u_k,\qquad u_k\in \widetilde H^1_0(\Omega),\ \int_\Omega u_k^2\,dx=1,$$
then we have the estimate (see \cite[Example 2.1.8]{davies})
\begin{equation}\label{davies218}
\|u_k\|_{L^\infty}\le e^{1/8\pi}\lambda_k(\Omega)^{d/4},
\end{equation}
and so $u_k$ satisfies the hypotheses of Proposition \ref{bhpprop}.
\end{oss}

Most of the perturbation techniques, that we will use in order to prove the Lipschitz continuity if the state functions $u$ on the optimal sets $\Omega$, provide us with information on the mean values $\meantext{B_r}u\,dx\ $ or $\meantext{\partial B_r}u\,d\H^{d-1}$. In order to transfer this information to the gradient $|\nabla u|$, we will need the following classical result.
 
\begin{oss}[Gradient estimate]\label{gradest}
Let $u\in H^1(B_r)$ is such that $-\Delta u=f$ in $B_r$ and $f\in L^\infty(B_r)$. Then we have 
\begin{equation}\label{gradest1}
\|\nabla u\|_{L^\infty(B_{r/2})}\le C_d\|f\|_{L^\infty(B_{r})}+\frac{2d}{r}\|u\|_{L^{\infty}(B_r)}.
\end{equation}
Note that, one can replace in \eqref{gradest1} $\|u\|_{L^{\infty}(B_r)}$ with the integral of $|u|$ on the boundary $\partial B_r$. In fact, we have the following estimate:
\begin{equation}
\|u\|_{L^\infty(B_{2r/3})}\le \frac{r^2}{2d}\|f\|_{L^\infty(B_r)}+ C_d\mean{\partial B_r}{|u|\,d\H^{d-1}}.
\end{equation}

Since, $\Delta u^++fI_{\{u>0\}}\ge0$ and $\Delta u^--fI_{\{u<0\}}\ge0$ in $B_r$, we have that $\Delta |u|+\|f\|_{L^\infty}\ge0$ in $B_r$.
Let $u_h$ be the harmonic function in $B_r$ with boundary values $u_h=|u|$ on $\partial B_r$. Since $|u|$ is non-negative, the same holds for $u_h$ and applying the Poisson's formula for the disk we have that 
\begin{equation}
\|u_h\|_{L^\infty(B_{2r/3})}\le C_d\mean{\partial B_r}{|u|\,d\H^{d-1}}.
\end{equation}
Moreover, by the maximum principle, we have that for any $x\in B_r$
\begin{equation}
\big| |u|-u_h \big|(x)\le \frac{r^2-|x|^2}{2d}\|f\|_{L^\infty(B_r)}.
\end{equation} 
Putting together the two estimates, we have:
\begin{align*}
\|u\|_{L^\infty(B_{2r/3})}&\le\frac{r^2}{2d}\|f\|_{L^\infty(B_r)}+\|u_h\|_{L^\infty(B_{2r/3})}\\
\\ 
&\le\frac{r^2}{2d}\|f\|_{L^\infty(B_r)}+ C_d\mean{\partial B_r}{|u|\,d\H^{d-1}}.
\end{align*}
\end{oss}

\subsection{The $\gamma$ and weak-$\gamma$ convergences}
In the proof of our main result (Theorem \ref{thk}) we will use a variational convergence defined on the measurable sets of finite Lebesgue measure. Indeed, for every $\Omega\subset\R^d$ with $|\Omega|<+\infty$, we will denote with $w_\Omega$ the solution of 
$$-\Delta w_\Omega=1,\qquad w_\Omega\in\widetilde H^1_0(\Omega).$$
We note that a measurable set $\Omega\subset\R^d$ is precisely determined, as a domain of the Sobolev space $\widetilde H^1_0(\Omega)$, by the \emph{energy function} $w_\Omega$. In fact, we have the equality 
$$\widetilde H^1_0(\Omega)=\widetilde H^1_0\big(\{w_\Omega>0\}\big).$$
If the measurable set $\Omega$ is such that $\big|\Omega\Delta\{w_\Omega>0\}\big|=0$, then we can choose its representative in the family of measurable set to be precisely the set $\{w_\Omega>0\}$.
   
\begin{deff}
We say that the sequence of sets of finite measure $\Omega_n$
\begin{itemize}
\item $\gamma$-\emph{converges} to the set $\Omega$, if the sequence $w_{\Omega_n}$ converges strongly in $L^2(\R^d)$ to the function $w_\Omega$;
\item \emph{weak-$\gamma$-converges} to the set $\Omega$, if the sequence $w_{\Omega_n}$ converges strongly in $L^2(\R^d)$ to the function $w\in H^1(\R^d)$ and $\Omega=\{w>0\}$.
\end{itemize}
%
\end{deff}
We note that in the case of a weak-$\gamma$-converging sequence $\Omega_n\to\Omega$, there is a comparison principle between the limit function $w=L^2-\lim_{n\to\infty}w_{\Omega_n}$ and the energy function $w_\Omega$. Indeed, we have the inequality $w\le w_\Omega$, which follows by the variational characterization of $w$, through the so called \emph{capacitary measures}, or it can also be proved directly by comparing the functions $w_{\Omega_n}$ to $w_\Omega$ (see \cite{buve}). Using only this weak maximum principle and the definitions above, one may deduce the following properties of the $\gamma$ and the weak-$\gamma$ convergences (for more details we refer the reader to the papers \cite{but10,budm93} and the books \cite{bubu05, hepi05}).
\begin{oss}[$\gamma$ and weak-$\gamma$-convergences]
If $\Omega_n$ $\gamma$-converges to $\Omega$, then it also weak-$\gamma$-converges to $\Omega$. Under the additional assumption $\Omega\subset\Omega_n$, for every $n\in\N$, we have that if $\Omega_n$ weak-$\gamma$-converges to $\Omega$, then $\Omega_n$ $\gamma$-converges to $\Omega$.
\end{oss}
\begin{oss}[measure and weak-$\gamma$-convergences]
If $\Omega_n$ converges to $\Omega$ in $L^1(\R^d)$, i.e. $\big|\Omega_n\Delta\Omega\big|\to0$, then up to a subsequence $\Omega_n$ weak-$\gamma$-converges to $\Omega$. On the other hand, if $\Omega_n$ weak-$\gamma$-converges to $\Omega$, then we have the following semi-continuity of the Lebesgue measure:
$$|\Omega|\le \liminf_{n\to\infty}|\Omega_n|.$$
\end{oss}
\begin{oss}[$\gamma$ and Mosco convergences]
\begin{enumerate}[(a)]
\item Suppose that $\Omega_n$ weak-$\gamma$-converges to $\Omega$. Then, if the sequence $u_n\in \widetilde H^1_0(\Omega_n)$ converges in $L^2(\R^d)$ to $u\in H^1(\R^d)$, we have that $u\in \widetilde H^1_0(\Omega)$. In particular, we obtain the semi-continuity of $\lambda_k$, with respect to the weak-$\gamma$-convergence:
$$\lambda_k(\Omega)\le\liminf_{n\to\infty}\lambda_k(\Omega_n).$$
\item Suppose that $\Omega_n$ $\gamma$-converges to $\Omega$. Then, for every $u\in \widetilde H^1_0(\Omega)$, there is a sequence $u_n\in \widetilde H^1_0(\Omega_n)$ converging to $u$ strongly in $H^1(\R^d)$. As a consequence, we have the continuity of $\lambda_k$ with respect to the $\gamma$-convergence:
$$\lambda_k(\Omega)=\liminf_{n\to\infty}\lambda_k(\Omega_n).$$
\end{enumerate}
\end{oss}
%

\section{Lipschitz continuity of energy quasi-minimizers}\label{quasimin}

In this section we study the properties of the local quasi-minimizers for the Dirichlet integral. More precisely, let $f\in L^2(\R^d)$ and let $u\in H^1(\R^d)$ satisfies
\begin{equation}
-\Delta u=f, \qquad u\in\widetilde H^1_0\big(\{u\neq0\}\big).
\end{equation}
\begin{deff}
We say that $u$ is a \emph{quasi-minimizer} for the functional 
\begin{equation}\label{defJf}
J_f(u)=\frac12\int_{\R^d}|\nabla u|^2\,dx-\int_{\R^d}uf\,dx,
\end{equation}
if there is a positive constant $C$ such that for every $r>0$ 
we have 
\begin{equation}\label{qminJ}
J_f(u)\le J_f(v)+Cr^d,\qquad \forall v\in \mathcal{A}_r(u),
\end{equation}  
where the admissible set $\mathcal{A}_r(u)$ is defined as
$$\mathcal{A}_r(u):=\Big\{v\in H^1(\R^d)\,:\, \exists x_0\in\R^d\ \hbox{such that}\ \ v-u\in H^1_0(B_r(x_0))\Big\}.$$
\end{deff}
\begin{deff}\label{defqmin}
We say that $u$ is a \emph{local quasi-minimizer}, if there are positive constants $\alpha$ and $r_0$ such that for every $0<r\le r_0$ we have
\begin{equation}
J_f(u)\le J_f(v)+Cr^d,\qquad \forall v\in \mathcal{A}_{r,\alpha}(u),
\end{equation}  
where the admissible set $\mathcal{A}_{r,\alpha}(u)$ is defined as
$$\mathcal{A}_{r,\alpha}(u):=\Big\{v\in H^1(\R^d)\,:\, \exists x_0\in\R^d\ \hbox{s.t.}\ \ v-u\in H^1_0(B_r(x_0)), \int |\nabla(u-v)|^2\,dx\le\alpha\Big\}.$$
\end{deff}
\begin{oss}
The local quasi-minimality condition is equivalent to suppose that for every ball $B_r(x_0)$, of radius smaller than $r_0$, and every $\varphi\in H^1_0(B_r(x_0))$, such that $\int|\nabla\varphi|^2\,dx\le \alpha$, we have 
\begin{equation}\label{qminequi1}
\vert\langle\Delta u+f,\varphi\rangle\vert\le \frac12\int_{B_r(x_0)}|\nabla\varphi|^2\,dx+Cr^d.
\end{equation}
Moreover, if for some constant $C>0$ $u$ satisfies 
\begin{equation}\label{qminequi2}
\vert\langle\Delta u+f,\varphi\rangle\vert\le C\left(\int_{B_r(x_0)}|\nabla\varphi|^2\,dx+r^d\right),
\end{equation}
for $r$ and $\varphi$, as above, then setting $\widetilde\varphi=(2C)^{-1}\varphi$, we have that $u$ satisfies \eqref{qminequi1} and so, is a quasi-minimizer.
\end{oss}

\begin{oss}
Let $\psi\in H^1_0(B_r(x_0))$. Testing \eqref{qminequi1} with $\varphi:=r^{d/2}\|\nabla \psi\|_{L^2}^{-1}\psi$, we obtain that the quasi-minimality of $u$ gives 
\begin{equation}\label{qminequi3}
\vert\langle\Delta u+f,\psi\rangle\vert\le Cr^{d/2}\left(\int_{B_r(x_0)}|\nabla\psi|^2\,dx\right)^{1/2}.
\end{equation}
Moreover, by the mean geometric-mean quadratic inequality, we have that condition \eqref{qminequi3} is equivalent to the quasi-minimality of $u$.
\end{oss}

\begin{oss}
If $f\in L^\infty(\R^d)$ and the support $\Omega$ is of finite Lebesgue measure, then the quasi-minimality of $u$ with respect to $J$ is equivalent to the quasi-minimality of $u$ with respect to the Dirichlet integral
$$J_0(u)=\int_{\R^d}|\nabla u|^2\,dx.$$ 
\end{oss}

In what follows we prove a Theorem concerning the Lipschitz continuity of the local quasi-minimizers. This result is a consequence of the techniques introduced by Brian\c{c}on, Hayouni and Pierre \cite{bhp05}.

\begin{teo}\label{main}
Let  $\Omega\subset\R^d$ be a measurable set of finite measure, $f\in L^\infty(\Omega)$ and the function $u\in H^1(\R^d)$ be such that
\begin{enumerate}[(a)]
\item $u$ is a solution of the following equation on $\Omega$
$$-\Delta u=f,\qquad u\in\widetilde H^{1}_0(\Omega);$$
\item $u$ is a local quasi-minimizer for the functional $J_f$, i.e. there are constants $r_0\le 1$ and $C_b$ such that for every $x\in\R^d$, every $0<r\le r_0$ and every $\varphi\in H^1_0(B_r(x))$ we have
\begin{equation}\label{b}
|\langle\Delta u+f,\varphi\rangle|\le C_b\|\nabla\varphi\|_{L^2}|B_r|^{1/2}.
\end{equation}
\end{enumerate} 
Then: 
\begin{enumerate}[(1)]
\item $u$ is Lipschitz continuous on $\R^d$ and the Lipschitz constant depends on $d$, $\|f\|_\infty$, $|\Om|$, $C_b$ and $r_0$.
\item the distribution $\Delta |u|$ is a Borel measure satisfying
\begin{equation}\label{cirm05}
|\Delta |u||(B_r(x))\le C\,r^{d-1},
\end{equation}
for every $x\in\R^d$ such that $u(x)=0$, where the constant $C$ depends on $d$, $\|f\|_\infty$, $|\Om|$ and $C_b$ (but not on $r_0$). 
\end{enumerate}
\end{teo}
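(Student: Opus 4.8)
The plan is to follow the iteration scheme of Brian\c con--Hayouni--Pierre \cite{bhp05}, but let me lay out the steps as I would carry them out. The heart of the matter is a decay estimate for the spherical means of $u$ near a free boundary point. Fix a point $x_0$ with $u(x_0)=0$; without loss of generality take $x_0=0$ and set
$$\phi(r):=\mean{\partial B_r}{|u|\,d\H^{d-1}}.$$
By Proposition \ref{bhpprop} applied to $|u|$ (which satisfies $\Delta|u|+\|f\|_\infty \ge 0$, so the monotonicity-type formula there is available), $\phi$ is essentially monotone and controls $|u|$ pointwise; moreover $\phi(r)/r$ is, up to the lower-order term $\|f\|_\infty r$, the quantity I want to bound by a constant. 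The key claim to establish is a \emph{dichotomy at scale $r$}: there is $\varepsilon_0=\varepsilon_0(d,\|f\|_\infty,|\Om|,C_b)>0$ and $\tau\in(0,1)$ such that
$$\phi(r)\le \varepsilon_0\, r \quad\Longrightarrow\quad \phi(\tau r)\le \tfrac12\,\tau\,\varepsilon_0\, r + C\|f\|_\infty (\tau r)^2 ,$$
while in the opposite regime $\phi(r)> \varepsilon_0 r$ one can only lose a bounded multiplicative factor. Iterating this dyadically from $r=r_0$ down to $0$ yields $\phi(r)\le C r$ for all small $r$, with $C$ depending on the stated quantities; combined with the gradient estimate in Remark \ref{gradest} (in its boundary form, with the $\meantext{\partial B_r}|u|\,d\H^{d-1}$ on the right-hand side) applied on $B_r(x_0)$, this gives $\|\nabla u\|_{L^\infty(B_{r/2}(x_0))}\le C$ near every free boundary point, and standard interior elliptic estimates handle points where $u\ne 0$; together these give the Lipschitz bound (1).

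To prove the dichotomy I would compare $u$ on $B_r$ with a competitor: let $h$ be the harmonic function in $B_r$ with $h=u$ on $\partial B_r$, and test the quasi-minimality bound \eqref{b} with (a truncation/cutoff of) $\varphi = h - u \in H^1_0(B_r)$ — this is where hypothesis (b) is used and where the term $cr^d$, i.e.\ $|B_r|^{1/2}$ in \eqref{b}, enters. One extracts that $\int_{B_r}|\nabla(u-h)|^2 \le C\big(r^{d/2}\|\nabla(u-h)\|_{L^2} + \text{l.o.t.}\big)$, hence $\int_{B_r}|\nabla(u-h)|^2\le C r^d$, which says $u$ is close to harmonic at scale $r$ in an $L^2$-gradient sense. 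Since $u$ vanishes at the center while a nonnegative harmonic function $h$ (here $h\ge 0$ because $|u|\ge 0$ on the sphere — one works with $|u|$) satisfying $h(0)$ small forces $h$ small on $B_{\tau r}$ by Harnack/Poisson, the smallness of $\phi(r)$ propagates to the smaller sphere, with the quadratic error coming from the $\|f\|_\infty$ Poisson correction as in Remark \ref{gradest}. I should also record the nondegeneracy/measure bound: once $\phi(r)\le C r$ is known, feeding it back through \eqref{pwde4} of Proposition \ref{bhpprop} gives $|\Delta|u||(B_r(x))\le C r^{d-1}$ for $u(x)=0$, which is statement (2); note this estimate only needs the decay of $\phi$ and not $r_0$, which is why the constant there is independent of $r_0$.

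The main obstacle is the iteration bookkeeping: one must choose $\tau$ and $\varepsilon_0$ so that the "good" step strictly contracts $\phi(r)/r$ \emph{and} absorbs the lower-order $\|f\|_\infty r$ term uniformly across all dyadic scales, while ensuring that only finitely many "bad" steps (where $\phi(r)>\varepsilon_0 r$) occur before the process locks into the good regime; controlling the accumulation of the additive errors $cr^d$ summed over dyadic scales — a convergent geometric-type series once $d\ge 1$ — and keeping track that all constants depend only on $d,\|f\|_\infty,|\Om|,C_b,r_0$ (and, for (2), not on $r_0$) is the delicate part. A secondary technical point is the use of the competitor $h-u$: since it is not a priori bounded in $\|\nabla\cdot\|_{L^2}$ by $\alpha$, one has to invoke the equivalence noted in the Remarks (inequality \eqref{qminequi3} is equivalent to the full quasi-minimality, hence to the unconstrained form \eqref{b} after rescaling $\varphi$), so that \eqref{b} may be applied with this $\varphi$ directly at every scale $r\le r_0$. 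The rest — interior regularity away from $\{u=0\}$, passing from the spherical-mean decay to the gradient bound via Remark \ref{gradest} — is routine.
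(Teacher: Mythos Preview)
Your proposal has a genuine gap: it does not handle the sign-changing case, which is the whole point of the Brian\c{c}on--Hayouni--Pierre argument. You write ``here $h\ge 0$ because $|u|\ge 0$ on the sphere --- one works with $|u|$'', but the quasi-minimality condition \eqref{b} is satisfied by $u$, not by $|u|$; the measure $\Delta|u|+f$ has no reason to be controlled by \eqref{b}, so you cannot test with $\varphi=h-|u|$. If instead you work with $u$ itself and let $h$ be its harmonic extension in $B_r$, then $h$ changes sign and the Harnack/Poisson step you invoke (``$h(0)$ small forces $h$ small on $B_{\tau r}$'') fails outright: a harmonic function with $h(0)=0$ can have arbitrarily large oscillation on $B_{\tau r}$. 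Your dichotomy--iteration scheme is essentially the Alt--Caffarelli argument for nonnegative $u$, and the paper explicitly notes at the start of the Appendix that this approach does not go through when $u$ changes sign.

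The paper's route is structurally different and you have the logical order reversed. One first proves continuity of $u$ (Lemma~\ref{contstate}), then establishes the measure bound \eqref{cirm05} directly (Lemma~\ref{l2}): on each ball one introduces the separate replacements $v^r_\pm$ of $u^\pm$, uses \eqref{b} to bound $\int_{B_r}|\nabla(u-v^r)|^2\le C r^d$, and crucially invokes the Alt--Caffarelli--Friedman monotonicity formula (Corollary~\ref{mon}) to bound the \emph{product} $\big(\meantext{B_r}|\nabla u^+|^2\big)\big(\meantext{B_r}|\nabla u^-|^2\big)$; combining the sum bound with the product bound yields control of each of $\int_{B_r}|\nabla(u^\pm-v^r_\pm)|^2$ separately, from which the bound $\mu_1(B_{r/4})+\mu_2(B_{r/4})\le C r^{d-1}$ follows. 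Only \emph{after} the measure bound is in hand does one obtain $\phi(r)\le Cr$ via \eqref{pwde4} and then the Lipschitz bound via Remark~\ref{gradest}; your proposal runs this implication backwards, and in particular the step ``$\phi(r)\le Cr$ fed back through \eqref{pwde4} gives \eqref{cirm05}'' is not justified (formula \eqref{pwde4} integrates the measure to produce the spherical mean, not the other way around). The missing ingredient in your outline is precisely the monotonicity formula, without which the two phases $\{u>0\}$ and $\{u<0\}$ cannot be decoupled.
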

A precise account on the Lipschitz constant of $u$ from Theorem \ref{main} is
$$\|u\|_{C^{0,1}}\le C_d\left(1+ |\Om|^ \frac{d+4}{2d}   +C_b + \frac{ |\Om|^ \frac{2}{d}}{r_0} \right)  \|f\|_\infty.$$
We notice that condition {\it (b)}  is also necessary for the Lipschitz continuity of $u$. In fact, it expresses in a weak form the boundedness of the gradient $|\nabla u|$ on the boundary $\partial\Omega$. 

The proof of this theorem is implicitly contained in \cite[Theorem 3.1]{bhp05}. For the sake of completeness, we reproduce it in the Appendix. 

\begin{teo}\label{cirm03}
Under the hypotheses of Theorem \ref{main}, assume that $u$ is a normalized eigenfunction  (i.e. there exists $\lambda >0$ such that $f=\lambda u$ and $\int u^2 dx =1$) satisfying condition (a) and (b). Then, the Lipschitz constant is  independent of $r_0$.
\end{teo}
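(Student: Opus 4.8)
The plan is to revisit the quantitative Lipschitz estimate
$$\|u\|_{C^{0,1}}\le C_d\Big(1+|\Om|^{\frac{d+4}{2d}}+C_b+\frac{|\Om|^{\frac{2}{d}}}{r_0}\Big)\|f\|_\infty$$
from Theorem \ref{main} and to remove the $r_0$-dependence by exploiting the special structure $f=\lambda u$. The only place where $r_0$ enters the estimate from Theorem \ref{main} is in the passage from the \emph{local} quasi-minimality condition (valid only for $0<r\le r_0$) to a \emph{global} bound on the oscillation of $u$ at all scales; concretely, the term $\frac{|\Om|^{2/d}}{r_0}\|f\|_\infty$ comes from controlling $\|u\|_{L^\infty}$ (or the mean of $|u|$ over spheres of radius $\sim r_0$) via a crude estimate on balls of the threshold size $r_0$. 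So the first step is to isolate that term in the proof of Theorem \ref{main} and to observe that it can be replaced by any a priori bound on $\|u\|_{L^\infty(\R^d)}$ together with the global $L^2$ information.

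First I would recall that for a normalized eigenfunction we already have the \emph{scale-free} bound \eqref{davies218}, namely $\|u\|_{L^\infty}\le e^{1/8\pi}\lambda^{d/4}$, which makes no reference to $r_0$. Next, note that $\|f\|_\infty=\lambda\|u\|_\infty\le e^{1/8\pi}\lambda^{1+d/4}$, again $r_0$-free. Then I would trace through the argument of Theorem \ref{main} (reproduced in the Appendix): the contradiction/iteration scheme there proves Lipschitz continuity by a dichotomy on balls $B_r(x_0)$ with $u(x_0)=0$, where small balls are handled by \eqref{b} (the quasi-minimality, which for an eigenfunction holds with $C_b$ independent of $r_0$ by Theorem \ref{cirm03}'s hypothesis) and large balls are handled trivially because one can bound $\meantext{\partial B_r(x_0)}|u|\,d\H^{d-1}$ by $r^{-(d-1)/2}\|u\|_{L^2}$ when $r\ge$ some fixed scale, using Cauchy–Schwarz, and $\|u\|_{L^2}=1$. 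Combining this with the gradient estimate of Remark \ref{gradest} converts the sphere-mean control into the pointwise Lipschitz bound. In each of these ingredients the only role previously played by $r_0$ is now played by the universal constants $e^{1/8\pi}$, $\lambda$, $|\Om|$, and $C_b$.

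The main obstacle is the ``large scale'' regime in the iteration of Theorem \ref{main}: there the original proof uses local quasi-minimality only for $r\le r_0$ and patches the rest by a brute-force estimate that costs a factor $1/r_0$. I expect the delicate point to be showing that, for an eigenfunction, one never actually needs quasi-minimality beyond a \emph{fixed} scale determined by $|\Om|$ and $\lambda$ alone: indeed, once $r$ exceeds, say, $(2|\Om|/\omega_d)^{1/d}$, the set $\Omega$ occupies at most half of $B_r(x_0)$, and one can combine the mean-value/subharmonicity identity \eqref{pwde4} with the measure bound $|\Delta u|(B_r(x))\le\|f\|_\infty|B_r\cap\Om|\le\|f\|_\infty|\Om|$ to get a bound on $\meantext{\partial B_R}|u|\,d\H^{d-1}$ that is uniform in $R$ and $r_0$. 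Feeding this back, together with \eqref{cirm05} (whose constant is already declared independent of $r_0$ in Theorem \ref{main}), closes the induction and yields a Lipschitz constant of the form $C_d\big(1+\lambda^{d/4}+\lambda^{1+d/4}+C_b\big)$ depending only on $d$, $\lambda$ and $C_b$, hence independent of $r_0$, which is the assertion.
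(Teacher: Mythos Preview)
Your plan has a concrete gap. You write that for large balls one can use the bound $|\Delta u|(B_r(x))\le\|f\|_\infty|B_r\cap\Omega|$; this is false. On $\R^d$ the distribution $\Delta u$ (and likewise $\Delta|u|$) is not $-f$ but $-f$ plus a singular measure supported on $\partial\{u\neq0\}$, precisely the measure $\mu=\mu_1-\mu_2$ from \eqref{l2e9}. Controlling that boundary part is the whole content of Lemma~\ref{l2} and of \eqref{cirm05}, and that control is available only for $r\le r_0/4$. Even if you replace your wrong bound by the correct global one, $|\Delta|u||(B_r)\le \mu_1(\R^d)+\mu_2(\R^d)+\int_{B_r}|f|\le C\|f\|_\infty|\Omega|$, this constant does not scale like $r^{d-1}$, so the integral $\int_{r_0/4}^{3R_x}s^{1-d}|\Delta|u||(B_s(y))\,ds$ in \eqref{maine1} behaves like $(r_0)^{2-d}$ for $d\ge3$ and blows up as $r_0\to0$. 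Your proposed splitting at a ``fixed scale $(2|\Omega|/\omega_d)^{1/d}$'' therefore does not bridge the intermediate range $r_0\lesssim r\lesssim 1$.

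The paper handles this by a completely different device that you are missing: a $P$-function maximum principle. One first observes (via Theorem~\ref{main}) that $u$ is Lipschitz, hence $\widetilde\Omega=\{u\neq0\}$ is open, and then defines $P:=|\nabla u|^2+\lambda u^2-2\lambda^2\|u\|_\infty^2\,w_{\widetilde\Omega}$ on $\widetilde\Omega$. A direct computation using $-\Delta u=\lambda u$ gives $\Delta P\ge0$, so $\sup_{\widetilde\Omega}P$ is attained arbitrarily close to $\partial\widetilde\Omega$. But in the strip $\{\hbox{dist}(\cdot,\partial\widetilde\Omega)<r_0/3\}$ the estimate \eqref{maine1} combined with \eqref{cirm05} already gives an $r_0$-free bound on $|\nabla u|$; feeding this into $P$ and using $\|w_{\widetilde\Omega}\|_\infty\le C_d|\Omega|^{2/d}$ yields an $r_0$-independent global bound on $\|\nabla u\|_\infty$. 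This propagation from the boundary to the interior via subharmonicity of $P$ is the key step; your proposal to push the measure/mean-value machinery to all scales does not substitute for it.
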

\begin{proof}
We first notice that by \eqref{davies218} we have $\|f\|_\infty=\lambda\|u\|_\infty\le 2\lambda^{\frac{d+4}{4}}$.
By Theorem \ref{main}, applied to $u$ and $f:=\lambda u$, we have that $u$  is Lipschitz continuous. We shall prove that the Lipschitz constant is independent on $r_0$. We set $\widetilde\Omega:=\{u\neq0\}$ and we note that $\widetilde\Omega$ is an open set. Let $x$ be such that $d(x, \widetilde\Omega^c)< \min\{r_0/3,1
\}$  and let $y \in \partial \widetilde\Omega$ such that $R_x:=d(x, \widetilde\Omega^c)=|x-y|$. 
Following Remark \ref{gradest}

\begin{equation}\label{maine1131001a}
\begin{array}{lll}
\ds|\nabla u(x)|&\ds\le C_d\lambda \|u\|_{L^\infty}+\frac{2d}{R_x}\|u\|_{L^\infty(B_{R_x}(x))}\\
\\
&\ds\le C_d\lambda \|u\|_{L^\infty}+\frac{2d}{R_x}\|u\|_{L^\infty(B_{2R_x}(y))}\\
\\
&\ds\le (C_d+R_x)\lambda \|u\|_{L^\infty}+\frac{C_d}{R_x}\mean{\partial B_{3R_x}(y)}{|u|\,d\H^{d-1}}.
\end{array}
\end{equation} 
The last inequality comes from the estimate on $B_{2R_x}(y)$ of the function
$$|u(\cdot)|-\frac{(3R_x)^2 -|\cdot|^2}{2d} \lambda \|u\|_{\infty},$$
which is sub-harmonic on the ball $B_{3R_x}(y)$ (see Remark \ref{gradest}). Hence
\begin{equation}\label{maine1131001b}
\begin{array}{lll}
\ds|\nabla u(x)|&\ds\le (C_d+R_x)\lambda \|u\|_{L^\infty}+\frac{C_d}{R_x}\int_0^{3R_x}s^{1-d}|\Delta |u||(B_s(y))\,ds\\
\\
&\ds\le (C_d+R_x)\lambda \|u\|_{L^\infty}+3C_d C,
\end{array}
\end{equation} 
where $C$ is the constant from  \eqref{cirm05}.

Consider the function $P\in C^\infty(\widetilde\Omega)$ defined as
\begin{equation}
P:=|\nabla u|^2+\lambda u ^2-2\lambda ^2\|u \|^2_\infty w_{\widetilde\Omega},
\end{equation}
where $w_{\widetilde\Omega}$ is the solution of the equation 
\begin{equation*}
-\Delta w_{\widetilde\Omega}=1,\qquad w_{\widetilde\Omega}\in H^1_0(\widetilde\Omega).
\end{equation*}

A direct computation gives that $P$ is sub-harmonic on the open set $\widetilde \Omega$, i.e.
\begin{equation}
\Delta P=\big(2[Hess(u )]^2-2\lambda |\nabla u |^2\big)+\big(2\lambda |\nabla u |^2-2\lambda ^2u ^2\big)+2\lambda ^2\|u \|^2_\infty\ge0.
\end{equation}
Thus, by the maximum principle we get 
$$\sup\big\{P(x)\,:\,x\in\widetilde\Omega\big\}\le\sup\big\{P(x)\,:\, x \in \widetilde\Omega,\ \hbox{dist}(x, \partial \widetilde\Omega) < r_0/3 \big\},$$
and so, using the boundary estimate \eqref{maine1131001b}, we obtain
\begin{equation}\label{the8}
\begin{array}{lll}
\|\nabla u \|_\infty^2&\le 2\lambda ^2\|u\|^2_\infty \|w_{\widetilde\Omega}\|_\infty+2\lambda \|u\|^2_\infty+\big((C_d+1)\lambda \|u\|_{L^\infty}+3C_d C\big)^2.
\end{array}
\end{equation}
Now the conclusion follows by \eqref{davies218} and the classical bound $\|w_{\widetilde\Omega}\|_\infty\le C_d |\widetilde\Omega|^{2/d}$ (see, for example, \cite[Theorem 1]{talenti}). 

\end{proof}
\begin{oss}\label{bvdp01}
Notice that the Lipschitz norm of $u$ satisfying the hypotheses of Theorem \ref{cirm03}, depends ultimately  on $d$, $|\Omega|$ and $\lambda$.
\end{oss}

\section{Shape quasi-minimizers for Dirichlet eigenvalues}\label{gap}
In this section we discuss the regularity of the eigenfunctions on sets which are minimal with respect to a given (spectral) shape functional.
In what follows we denote with $\mathcal{A}$ the family of subset of $\R^d$ with finite Lebesgue measure endowed with the equivalence relation $\Om \sim \tilde \Om$, whenever $|\Om\Delta \tilde \Om|=0$.

\begin{deff}\label{qmin}
We say that the measurable set $\Omega\in  {\mathcal A}$ is a shape quasi-minimizer for the functional $\mathcal F:\mathcal{A}\to\R$, if there exist constants $C>0$ and $r_0>0$ such that every $0<r\le r_0$ we have 
$$\mathcal F(\Omega)\le \mathcal F(\tilde \Omega)+ C|B_r|,\qquad \forall \widetilde\Omega\in\mathcal{A}_r(\Omega),$$
where the admissible set of perturbations $\mathcal{A}_r(\Omega)$ is given by
$$\mathcal{A}_r(\Omega)=\Big\{\widetilde\Omega\in\mathcal{A}\,:\,\exists x\in\R^d\ \hbox{such that}\quad \Omega \Delta \widetilde \Omega \subseteq B_r(x)\Big\}.$$
\end{deff}

\begin{oss}\label{cham1}
If the functional $\mathcal F$ is non-increasing with respect to inclusions, then $\Om$ is a shape quasi-minimizer, if and only if,
$$\mathcal F(\Omega)\le \mathcal F\big( \Omega\cup B_r(x)\big)+ C|B_r|.$$
\end{oss}

We expect that the property of shape quasi-minimality contains some information on the regularity of $\Omega$. In fact, for some shape functionals $\mathcal{F}$ one can easily deduce from the shape quasi-minimality of $\Omega$ the quasi-minimality of the state functions on $\Omega$. For example, suppose that $\Omega$ is a shape quasi-minimizer for the Dirichlet Energy
$$E(\Omega):=\min\Big\{J_1(u):\ u\in \widetilde H^1_0(\Omega)\Big\}.$$
Then, for $r>0$ small enough and $\widetilde \Omega\Delta\Omega \subset B_r(x)$, we have 
$$J_1(w_\Omega)=E(\Omega)\le E(\widetilde\Omega)+C|B_r|\le J_1(w_\Omega+\varphi)+C|B_r|,$$
where $w_\Omega\in\widetilde H^1_0(\Omega)$ is the energy function on $\Omega$ and   $\varphi$ is any function from $H^1_0(B_r)$.
Thus the function $w_\Omega$ is a quasi-minimizer for the functional $J_1$ in sense of Definition \ref{defqmin} and so, by Theorem \ref{main}, we can conclude that the energy function $w_\Omega$ is Lipschitz continuous on $\R^d$.

The case $\mathcal F=\lambda_k$ is more involved, since the $k^{th}$ eigenvalue is not defined through a single state function, but is variationally characterized by a min-max procedure involving an entire linear subspace of $\widetilde H^1_0(\Omega)$. In order to transfer the minimality information from $\Omega$ to its eigenfunctions $u_k$, we need an estimate on the variation of $\lambda_k$, with respect to external perturbation, in terms of the variation of the energy of $u_k$.

   In Lemma \ref{measl} below, we assume that $\Omega$ is a generic set of finite measure and $l\ge 1$ is such that  
\begin{equation}\label{cirm01}
\lambda_{k}(\Omega)=\dots=\lambda_{k-l+1}(\Omega)>\lambda_{k-l}(\Omega).
\end{equation}
We also choose $ u_{k-l+1},\dots, u_k$ to be $l$ normalized orthogonal eigenfunctions corresponding to $k^{th}$ eigenvalue $\lambda_k(\Omega)$ of the Dirichlet Laplacian on $\Omega$.

The following notation is used: given a vector $\alpha= (\alpha_{k-l+1},...,\alpha_k)\in \R^l$, we denote with
${\boldsymbol u}_\alpha$ the corresponding linear combination 
\begin{equation}\label{cirm02}
{\boldsymbol  u}_\alpha= \alpha_{k-l+1}u_{k-l+1}+...+\alpha_{k}u_k.
\end{equation}

\begin{lemma}\label{measl}
Let $\Omega\subset\R^d$ be a set of finite measure and $l\ge 1$ is such that \eqref{cirm01} holds. Then there is a constant $r_0>0$ such that for every $x\in\R^d$, every $0<r<r_0$ and every $l$-uple of functions $v_{k-l+1},\dots,v_{k}\in H^1_0(B_r(x))$ with $\int|\nabla v_j|^2\le 1$, for $j=k-l+1,\dots,k$, there is a unit vector $\alpha \in\R^l$ such that
\begin{equation}\label{l1}
\lambda_k(\Omega\cup B_r(x))\le\frac{\int |\nabla(\boldsymbol u_\alpha+\boldsymbol v_\alpha)|^2\,dx+(\lambda_{k-l}(\Omega)+1)\int |\nabla \boldsymbol v_\alpha|^2\,dx}{\int |\boldsymbol u_\alpha+\boldsymbol v_\alpha|^2\,dx-\frac12\int |\nabla \boldsymbol v_\alpha|^2\,dx},
\end{equation}
where $ \boldsymbol u_\alpha,  \boldsymbol v_\alpha$ are defined using notation \eqref{cirm02}.

 The constant $r_0$ depends on $\Omega$. In particular, if the gap $\lambda_{k-l+1}(\Omega)-\lambda_{k-l}(\Omega)$ vanishes, $r_0$ vanishes as well. 
\end{lemma}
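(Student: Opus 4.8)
The plan is to estimate $\lambda_k(\Omega\cup B_r(x))$ by exhibiting a convenient $k$-dimensional competitor subspace for the min-max characterization. The natural candidate is the span of the first $k-l$ eigenfunctions $u_1,\dots,u_{k-l}$ on $\Omega$ (which already lie in $\widetilde H^1_0(\Omega)\subset\widetilde H^1_0(\Omega\cup B_r(x))$) together with the $l$ perturbed functions $u_j+v_j$, $j=k-l+1,\dots,k$. Since $v_j\in H^1_0(B_r(x))$, each $u_j+v_j$ belongs to $\widetilde H^1_0(\Omega\cup B_r(x))$, so this is an admissible test space. First I would check it is $(k)$-dimensional for $r$ small: the $u_j+v_j$ are close in $H^1$ to the orthonormal $u_j$ once $r$ (hence, via $\|v_j\|_{L^2}\le C_d r\|\nabla v_j\|_{L^2}\le C_d r$ by Poincaré) is small, so the whole family is linearly independent; this is where the dependence of $r_0$ on $\Omega$ — specifically on the spectral gap — enters, since one needs the perturbation to stay below a threshold governed by $\lambda_{k-l+1}(\Omega)-\lambda_{k-l}(\Omega)$.

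Next I would run the Rayleigh quotient estimate on this test space. For a generic element $w=\sum_{i\le k-l}\beta_i u_i + \boldsymbol u_\alpha+\boldsymbol v_\alpha$ with $(\beta,\alpha)$ a unit vector, I would expand $\int|\nabla w|^2$ and $\int w^2$ using orthonormality of $u_1,\dots,u_k$ in $L^2$ and orthogonality in the Dirichlet form, together with $-\Delta u_i=\lambda_i(\Omega)u_i$ to handle the cross terms $\int\nabla u_i\cdot\nabla v_j = \lambda_i(\Omega)\int u_i v_j$. The point is to choose, for given $v_{k-l+1},\dots,v_k$, the unit vector $\alpha$ that makes the quotient as large as possible over the $\boldsymbol u_\alpha+\boldsymbol v_\alpha$ directions — equivalently, to reduce to the worst direction in the $l$-dimensional perturbed block — and on the complementary $\beta$-directions to bound the quotient by $\lambda_{k-l}(\Omega)$, which is strictly below $\lambda_k(\Omega)$. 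The min-max then gives $\lambda_k(\Omega\cup B_r(x))\le$ the maximum of the quotient over the test space, and after discarding the (now harmless, because they only help) $\beta$-directions one is left with a bound of the form \eqref{l1}, where the denominator correction $-\tfrac12\int|\nabla\boldsymbol v_\alpha|^2$ absorbs the (sign-indefinite) $L^2$ cross terms $2\int\boldsymbol u_\alpha\boldsymbol v_\alpha$ via Cauchy–Schwarz and Poincaré ($|2\int\boldsymbol u_\alpha\boldsymbol v_\alpha|\le 2\|\boldsymbol u_\alpha\|_{L^2}\|\boldsymbol v_\alpha\|_{L^2}\le C_d r\|\nabla\boldsymbol v_\alpha\|_{L^2}\le\tfrac12\|\nabla\boldsymbol v_\alpha\|_{L^2}^2$ once $r_0$ is small), and the numerator correction $(\lambda_{k-l}(\Omega)+1)\int|\nabla\boldsymbol v_\alpha|^2$ absorbs the cross terms $2\int\nabla\boldsymbol u_\alpha\cdot\nabla\boldsymbol v_\alpha=2\lambda_k(\Omega)\int\boldsymbol u_\alpha\boldsymbol v_\alpha$ plus the contribution of mixing with the $u_i$, $i\le k-l$.

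The main obstacle I anticipate is the bookkeeping that separates the two blocks cleanly: one must verify that optimizing over $\alpha$ while simultaneously controlling the $\beta$-block does produce the stated single inequality, rather than a messier mixed quotient. The key mechanism making this work is the strict gap in \eqref{cirm01}: the $\beta$-directions contribute Rayleigh quotient at most $\lambda_{k-l}(\Omega)<\lambda_k(\Omega)$, so as long as $r_0$ is chosen small enough (depending on $\lambda_{k-l+1}(\Omega)-\lambda_{k-l}(\Omega)$, which degenerates as that gap does — matching the final sentence of the statement) the maximizing direction in the test space lies, up to lower-order terms, in the perturbed block, and the selection of $\alpha$ as the worst unit vector there is exactly what yields \eqref{l1}. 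I would conclude by noting that the constant $r_0$ so obtained depends only on $\Omega$ through $\lambda_{k-l}(\Omega)$, $\lambda_k(\Omega)$, the gap, and $d$, and in particular vanishes together with the gap.
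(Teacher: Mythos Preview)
Your overall strategy --- use the test space $\hbox{span}\langle u_1,\dots,u_{k-l},\,u_{k-l+1}+v_{k-l+1},\dots,u_k+v_k\rangle$ in the min--max and exploit the gap so that the maximizing direction lies essentially in the perturbed $l$-block --- is the same as the paper's. The gap is in the step you yourself flag as the obstacle: the bookkeeping that separates the two blocks.

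Your proposed absorptions are misplaced. The cross terms $2\int\boldsymbol u_\alpha\boldsymbol v_\alpha$ and $2\int\nabla\boldsymbol u_\alpha\cdot\nabla\boldsymbol v_\alpha$ are \emph{already inside} $B=\int|\boldsymbol u_\alpha+\boldsymbol v_\alpha|^2$ and $A=\int|\nabla(\boldsymbol u_\alpha+\boldsymbol v_\alpha)|^2$; they need no further treatment. (Your inequality $C_dr\|\nabla\boldsymbol v_\alpha\|_{L^2}\le\tfrac12\|\nabla\boldsymbol v_\alpha\|_{L^2}^2$ is also false when $\|\nabla\boldsymbol v_\alpha\|_{L^2}$ is small.) The corrections $(\lambda_{k-l}(\Omega)+1)\int|\nabla\boldsymbol v_\alpha|^2$ in the numerator and $-\tfrac12\int|\nabla\boldsymbol v_\alpha|^2$ in the denominator are there to control the mixing with the \emph{lower} block $u_1,\dots,u_{k-l}$, and that mixing cannot be ``discarded'': the maximizer of the Rayleigh quotient over the full test space genuinely has a nonzero component there, and it is precisely this component that must be estimated.

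The paper does this quantitatively. It takes $\alpha$ to be the (normalized) projection of the full maximizer onto the perturbed block, writes the lower-block part as $tu$ with $u$ a unit vector in $\hbox{span}\langle u_1,\dots,u_{k-l}\rangle$, and studies the one-parameter quotient
\[
F(t)=\frac{\lambda_{k-l}(\Omega)\,t^2+2at+A}{t^2+2bt+B},\qquad a=\int_{B_r}\nabla u\cdot\nabla\boldsymbol v_\alpha,\quad b=\int_{B_r}u\,\boldsymbol v_\alpha.
\]
The critical points of $F$ solve a quadratic; one root $t_2$ is large and is a local minimum, while the relevant root satisfies $|t_1|\le(\int|\nabla\boldsymbol v_\alpha|^2)^{1/2}$ once $r_0$ is small enough (this smallness is where the dependence on the gap $\lambda_{k}(\Omega)-\lambda_{k-l}(\Omega)$ enters). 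Substituting this bound on $t_1$ into $F(t_1)$ produces exactly the two correction terms in \eqref{l1}. Your proposal contains no such quantitative control of the mixing parameter, so as it stands the argument does not close.
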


\begin{proof} Without loss of generality, we can suppose $x=0$.
By the definition of the $k^{th}$ eigenvalue, we know that
$$\lb_k(\Om \cup B_r)\le \max \left\{\frac{\int|\nabla u|^2 dx}{\int u^2 dx} : u \in \mbox{span}\big\langle u_1,..., u_{k-l}, u_{k-l+1}+ v_{k-l+1}, ..., u_{k}+ v_{k}\big\rangle\right\}.$$
The maximum is attained for a linear combination 
$$\alpha_1 u_1+...+\alpha_{k-l} u_{k-l}+ \alpha_{k-l+1} (u_{k-l+1}+ v_{k-l+1})+ ...+  \alpha_{k} (u_{k}+ v_{k}).$$
Note that if $\lb_{k-l}(\Om) < \lb_k(\Om\cup B_r)$, then the vector 
$$\alpha=(\alpha_{k-l+1},...,\alpha_k)\in \R^l,$$
is non zero, and moreover can be chosen to be unitary. 
The inequality $\lb_{k-l}(\Om) < \lb_k(\Om\cup B_r(x))$, is true for every $x$ and every $r < r_0$ provided $r_0$ is small enough. This can be proved for instance by contradiction, since for every $x_n\in \R^d$ and for every $r_n\ra 0$, we have that $\Om\cup B_{r_n}(x_n)$ $\g$-converges to $\Om$. 

For simplicity, we denote $\lb_j=\lb_j(\Om)$, for every $j$. 

Using the notation \eqref{cirm02}, 
 for $r_0$ small enough, we have
\begin{equation}\label{l3}
\begin{array}{ll}
\ds \frac{\int|\nabla (\au+\av)|^2\,dx}{\int|\au+\av|^2\,dx}&=\ds \frac{\lambda_k+2\int \nabla \au\cdot\nabla \av\,dx+\int|\nabla \av|^2\,dx}{1+2\int \au \av\,dx+\int \av^2\,dx}\\
\\
&\ds \ge\frac{\lambda_k-2\left(\int_{B_r}|\nabla \au|^2\,dx\right)^{1/2}\left(\int_{B_r} |\nabla \av|^2\,dx\right)^{1/2}}{1+2\left(\int_{B_r} \au^2\,dx\right)^{1/2}\left(\int_{B_r} \av^2\,dx\right)^{1/2}+\int \av^2\,dx}\\
\\
&\ds \ge\frac{\lambda_k-2\left(\int_{B_{r_0}}|\nabla \au|^2\,dx\right)^{1/2}\left(\int_{B_r} |\nabla \av|^2\,dx\right)^{1/2}}{1+2\left(\int_{B_r} \av^2\,dx\right)^{1/2}+\int_{B_r} \av^2\,dx}\\
\\
&\ds \ge\frac{\lambda_k-2\left(\int_{B_{r_0}}|\nabla \au|^2\,dx\right)^{1/2}\left(\int_{B_r} |\nabla \av|^2\,dx\right)^{1/2}}{1+2C_d|B_{r_0}|^{1/d}\left(\int_{B_r} |\nabla \av|^2\,dx\right)^{1/2}+|C_dB_{r_0}|^{2/d}\int_{B_r} |\nabla \av|^2\,dx}\\
\\
&\ds \ge\frac{\lambda_{k-l}+\lambda_k}{2}.
\end{array}
\end{equation}

If all $\alpha_i$ for $i=1,..,k-l$ are zero, then the assertion of the theorem is trivially true. Otherwise, we define
$$u= \frac{1}{\sqrt{\alpha_1^2+...+\alpha_{k-l}^2}} (\alpha_1 u_1+...+\alpha_{k-l} u_{k-l}).$$
So $\int u^2 =1$ and $\int|\nabla u|^2 \le \lb_{k-l}$.

Consequently, 
$$\lb_k(\Om \cup B_r)\le \max \left\{\frac{\int|\nabla ( \au+\av +tu)|^2 dx}{\int |\au+\av+tu|^2 dx} : t \in \R\right\}.$$

We have 
\begin{equation}
\begin{array}{c}
\ds \frac{\int|\nabla (tu+\au+\av)|^2\,dx}{\int(tu+\au+\av)^2\,dx}
\le\frac{t^2\lambda_{k-l}(\Omega)+2t\int\nabla u\cdot\nabla(\au+\av )\,dx+\int|\nabla (\au+\av)|^2\,dx}{t^2+2t\int u(\au+\av)\,dx+\int|\au+\av|^2\,dx}\\
\\
\ds  =\frac{t^2\lambda_{k-l}(\Omega)+2t\int\nabla u\cdot\nabla \au \,dx+2t\int_{B_r}\nabla u\cdot\nabla \av\,dx+\int_{\R^d}|\nabla( \au+\av)|^2dx}{t^2+2t\int u \au\,dx+2t\int_{B_r}u\av\,dx+\int_{\R^d}|\au+\av|^2\,dx}\\
\\
\ds  =\frac{t^2\lambda_{k-l}(\Omega)+2t\int_{B_r}\nabla u\cdot\nabla \av\,dx+\int_{\R^d}|\nabla(\au+\av)|^2dx}{t^2+2t\int_{B_r}u\av\,dx+\int|\au+\av|^2dx}:= F(t).
\end{array}
\end{equation} 

For sake of simplicity we pose:

\begin{equation}\label{ab}
\begin{array}{ll}
a=\int_{B_r}\nabla u\cdot\nabla \av\,dx,&\qquad b=\int_{B_r}u\av\,dx,\\
\\
A=\int_{\R^d}|\nabla (\au+\av)|^2\,dx,&\qquad B=\int_{\R^d}| \au+\av|^2\,dx.
\end{array}\end{equation}
Note that we can make $a$ and $b$ arbitrarily small, by choosing $r_0$ small enough. In fact, we have the following estimates:
\begin{equation}\label{ab2}
|a|\le\left(\int_{B_r}|\nabla u|^2\,dx\right)^{1/2}\left(\int_{B_r}|\nabla \av|^2\,dx\right)^{1/2}\le\left(\int_{B_{r_0}}|\nabla u|^2\,dx\right)^{1/2}\left(\int_{B_r}|\nabla \av|^2\,dx\right)^{1/2},
\end{equation}
\begin{equation}\label{ab3}
|b|\le\left(\int_{B_r} u^2\,dx\right)^{1/2}\left(\int_{B_r}\av^2\,dx\right)^{1/2}\le C_d r_0\left(\int_{B_r}|\nabla \av|^2\,dx\right)^{1/2}.
\end{equation}

 Moreover, we can suppose that 
 $$\lambda_k/2\le A\le 2\lambda_k+1,\qquad 1/2\le B\le 2.$$
 
By \eqref{l3} and the fact that $\lim_{t\to\pm\infty}F(t)=\lambda_{k-l}<\frac{\lambda_{k-l}+\lambda_k}{2}\le F(0)$, we have that the maximum of $F$ is attained in $\R$. Computing the derivative, the zeros $t$ of  $F'$ satisfy
$$(\lambda_{k-l} t+a)(t^2+2bt+B)-(t+b)(\lambda_{k-l} t^2+2at+A)=0,$$
or, after simplification,
$$t^2(\lambda_{k-l} b-a)+t(\lambda_{k-l} B-A)+(aB-bA)=0.$$
Thus, we have that $\|F\|_\infty=\max\left\{F(t_1),F(t_2)\right\}$, where
\begin{equation}
\begin{array}{ll}
t_{1,2}&=\ds \frac{A-\lambda_{k-l} B\pm\sqrt{(A-\lambda_{k-l} B)^2-4(\lambda_{k-l} b-a)(aB-bA)}}{2(\lambda_{k-l} b-a)}\\
\\
&=\ds \frac{A-\lambda_{k-l} B}{2(\lambda_{k-l} b-a)}\left(1\pm\sqrt{1-\frac{4(\lambda_{k-l} b-a)(aB-bA)}{(A-\lambda_{k-l} B)^2}}\right)
\end{array}
\end{equation}
We choose $r_0$ small enough, in order to have 
$$\left|\frac{4(\lambda_{k-l} b-a)(aB-bA)}{(A-\lambda_{k-l} B)^2}\right|<\frac{1}{2}.$$
Then, since the function $x\mapsto \sqrt{1-x}$ is bounded and $1$-Lipschitz on the interval $(-\frac{1}{2},\frac{1}{2})$, we have the following estimate
\begin{equation}\label{l6}
\begin{array}{c}
\ds  |t_1|=\ds  \left|\frac{A-\lambda_{k-l} B}{2(\lambda_{k-l} b-a)}\left(1-\sqrt{1-\frac{4(\lambda_{k-l} b-a)(aB-bA)}{(A-\lambda_{k-l} B)^2}}\right)\right| \\
\le\ds \left|\frac{A-\lambda_{k-l} B}{2(\lambda_{k-l} b-a)}\right|\cdot\left|\frac{4(\lambda_{k-l} b-a)(aB-bA)}{(A-\lambda_{k-l} B)^2}\right|\\
\\
\ds  \le 2\left|\frac{aB-bA}{A-\lambda_{k-l} B}\right|\le 2\frac{|a|B+|b|A}{A-\lambda_{k-l} B}\le 4\frac{|a|+\lambda_k|b|}{A-\lambda_{k-l} B}\\
\\
\ds  \le 16\frac{|a|+\lambda_k|b|}{\lambda_k-\lambda_{k-l}}\le \left(\int_{B_r}|\nabla \av|^2dx\right)^{1/2}.
\end{array}
\end{equation}
The last inequality is obtained using \eqref{ab2} and \eqref{ab3}, for $r_0$ small enough. On the other hand, for $t_2$, we have
\begin{equation}\label{l7}
\begin{array}{ll}
\ds  \frac{1}{2}\left|\frac{A-\lambda_{k-l} B}{\lambda_{k-l} b-a}\right|\le|t_2|\le2\left|\frac{A-\lambda_{k-l} B}{\lambda_{k-l} b-a}\right|.
\end{array}
\end{equation}

Note that if we chooose $r_0$ such that $|t_1|<|t_2|$, then the maximum cannot be attained in $t_2$. In fact, $(\lambda_{k-l}b-a)t_2>0$ and so, in $t_2$, the derivative $F'$ changes sign from negative to positive, if $t_2>0$ and from negative to positive, if $t_2<0$, which proves that the maximum is attained in $t_1$. Choosing $r_0$ such that 
$$|a|\le\frac12\left(\int_{B_r}|\nabla \av|^2dx\right)^{1/2},\qquad |b|\le\frac14\left(\int_{B_r}|\nabla \av|^2dx\right)^{1/2},$$
we have

\begin{equation}
\begin{array}{ll}
F(t_1)&\ds \le\frac{\lambda_{k-l} t_1^2+2at_1+A}{t_1^2+2bt_1+B}\le\frac{\lambda_{k-l} t_1^2+|2at_1|+A}{t_1^2-|2bt_1|+B}\\
\\
&\ds \le\frac{\lambda_{k-l}\int_{B_r}|\nabla \av|^2dx+2|a|\left(\int_{B_r}|\nabla \av|^2dx\right)^{1/2}+A}{B-2|b|\left(\int_{B_r}|\nabla \av|^2dx\right)^{1/2}}\\
\\
&\ds \le\frac{A+\left(\lambda_{k-l}+1\right)\int_{B_r}|\nabla \av|^2dx}{B-\frac12\int_{B_r}|\nabla \av|^2dx},
\end{array}
\end{equation}
and so, the conclusion.
\end{proof}

\begin{oss}
The preceding Lemma \ref{measl} points out the main difficulty in the study of the regularity of spectral minimizers. Indeed, let $\Omega^\ast$ be a solution of a spectral optimization problem of the form \eqref{intro130930-1} involving $\lambda_k$ and
such that \eqref{cirm01} holds for some $l>1$. Then every perturbation $\widetilde u_k=u_k+v$ of the eigenfunction $u_k\in \widetilde H^1_0(\Omega^\ast)$ gives information on a linear combination $\au$ of eigenfunctions $u_k,\dots,u_{k-l+1}$, instead on the function $u_k$. Recovering some information on $u_k$ from an estimate on the linear combination is a difficult task since the combination itself depends on the perturbation $v$.
\end{oss}

\begin{oss}\label{meas2}
In case $\lb_k(\Om)>\lb_{k-1}(\Om)$, the result of the lemma above, states as
\begin{equation}\label{cham}
\lambda_k(\Omega\cup B_r(x))\le\frac{\int |\nabla(u_k+v)|^2\,dx+(\lambda_{k-1}(\Omega)+1)\int |\nabla v|^2\,dx}{\int |u_k+v|^2\,dx-\frac12\int |\nabla v|^2\,dx},
\end{equation}
for every $r<r_0$ and every $v \in H^1_0(B_r(x))$ such that $\int|\nabla v|^2\,dx\le 1$. 
\end{oss}

\begin{lemma}\label{double3}
Let $\Omega\subset\R^d$ be a shape quasi-minimizer for $\lambda_k$ such that $\lambda_k(\Omega)>\lambda_{k-1}(\Omega)$. Then every eigenfunction $u_k\in\widetilde H^1_0(\Omega)$, normalized in $L^2$ and corresponding to the eigenvalue $\lambda_k(\Omega)$, is Lipschitz continuous on $\R^d$. 
\end{lemma}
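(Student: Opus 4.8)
The plan is to transfer the shape quasi-minimality of $\Omega$ for $\lambda_k$ to a quasi-minimality property (in the sense of Definition~\ref{defqmin}, condition (b) of Theorem~\ref{main}) for the single eigenfunction $u_k$, exploiting the spectral gap $\lambda_k(\Omega)>\lambda_{k-1}(\Omega)$. The key point is that, because the gap is strict, the linear combination $\boldsymbol u_\alpha$ appearing in Lemma~\ref{measl} (with $l=1$) is exactly $u_k$, so Remark~\ref{meas2} applies and gives us a clean one-function estimate rather than a combination depending on the perturbation. Once we have the quasi-minimality of $u_k$, Lemma~\ref{double3} follows immediately from Theorem~\ref{cirm03} (since $u_k$ is a normalized eigenfunction), which also yields that the Lipschitz constant is independent of $r_0$.

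First I would fix $x\in\R^d$, $0<r\le r_0$, and a test function $v\in H^1_0(B_r(x))$; by homogeneity it suffices to treat $\int|\nabla v|^2\,dx\le 1$ after rescaling. Using the shape quasi-minimality of $\Omega$ for $\lambda_k$ together with Remark~\ref{cham1} (as $\lambda_k$ is non-increasing under inclusions), I get $\lambda_k(\Omega)\le\lambda_k(\Omega\cup B_r(x))+C|B_r|$. Plugging in the bound from Remark~\ref{meas2},
\begin{equation*}
\lambda_k(\Omega)\le\frac{\int|\nabla(u_k+v)|^2\,dx+(\lambda_{k-1}(\Omega)+1)\int|\nabla v|^2\,dx}{\int|u_k+v|^2\,dx-\frac12\int|\nabla v|^2\,dx}+C|B_r|.
\end{equation*}
Now I would expand numerator and denominator using $\int u_k^2\,dx=1$, $\int|\nabla u_k|^2\,dx=\lambda_k(\Omega)$, and $-\Delta u_k=\lambda_k(\Omega)u_k$, cross-multiply, and discard the positive higher-order terms in $\int|\nabla v|^2\,dx$ and $C|B_r|\int|\nabla v|^2\,dx$ that have a favorable sign. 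The leading-order cancellation of the $\lambda_k(\Omega)$ terms on both sides is the mechanism that produces the quasi-minimality inequality; what survives is, up to constants depending on $\lambda_k(\Omega)$, $\lambda_{k-1}(\Omega)$ and $|\Omega|$, an estimate of the form
\begin{equation*}
2\int\nabla u_k\cdot\nabla v\,dx+\int|\nabla v|^2\,dx\ge -C'\Big(\int|\nabla v|^2\,dx+|B_r|\Big),
\end{equation*}
i.e. $\langle\Delta u_k+\lambda_k(\Omega)u_k,v\rangle$ is bounded below by $-\tfrac12\int|\nabla v|^2-C'|B_r|$. Applying the same computation to $-v$ (which is admissible whenever $v$ is) upgrades this to the two-sided bound \eqref{qminequi2}, hence, after the rescaling recorded in the remarks following Definition~\ref{defqmin}, to condition (b) of Theorem~\ref{main} with $f=\lambda_k(\Omega)u_k$.

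The main obstacle is the bookkeeping in the cross-multiplication step: one must carefully check that every error term discarded really does have the right sign or is genuinely of order $\int|\nabla v|^2$ or $|B_r|$, and in particular that the denominator $\int|u_k+v|^2\,dx-\tfrac12\int|\nabla v|^2\,dx$ stays bounded away from zero for $r\le r_0$ — this is where the Poincaré-type control $\int_{B_r}v^2\,dx\le C_d|B_r|^{2/d}\int|\nabla v|^2\,dx$ and the smallness of $r_0$ enter, exactly as in the proof of Lemma~\ref{measl}. A secondary point to be careful about: $u_k$ and $v$ need not be orthogonal, so the first-order term $\int u_k v\,dx$ in the denominator must be absorbed using Cauchy–Schwarz and the support of $v$ in $B_r$; it contributes only lower-order terms. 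Once the quasi-minimality inequality \eqref{b} is established, Theorem~\ref{cirm03} gives the Lipschitz continuity of $u_k$ on $\R^d$ with constant depending only on $d$, $|\Omega|$ and $\lambda_k(\Omega)$, as noted in Remark~\ref{bvdp01}, which concludes the proof.
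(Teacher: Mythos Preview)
Your proposal is correct and follows essentially the same approach as the paper: combine the shape quasi-minimality inequality $\lambda_k(\Omega)\le\lambda_k(\Omega\cup B_r(x))+C|B_r|$ with the one-function estimate of Remark~\ref{meas2}, expand and cross-multiply to obtain the two-sided bound $|\langle\Delta u_k+\lambda_k(\Omega)u_k,v\rangle|\le C|B_r|+(\lambda_k(\Omega)+1)\int|\nabla v|^2\,dx$, and conclude via Theorem~\ref{cirm03}. The paper's proof simply states the resulting inequality \eqref{cirm07} without spelling out the cross-multiplication and sign-checking that you (correctly) flag as the main bookkeeping obstacle.
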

\begin{proof}
Let $u_k$ be a normalized eigenfunction corresponding to $\lambda_k$. By the shape quasi-minimality of $\Omega$, we have
\begin{equation}
\lambda_k(\Omega)\le\lambda_k\big(\Omega\cup B_r(x)\big)+C|B_r|.
\end{equation}
Applying the estimate \eqref{cham} for $v\in H^1_0(B_r)$, we obtain

\begin{equation}\label{cirm07}
\left|\langle\Delta u_{k}+\lambda_k(\Omega)u_{k},v\rangle\right|\le C|B_r|+\big(\lambda_{k}(\Omega)+1\big)\int|\nabla v|^2\,dx,
\end{equation}
and so, the function $u_k$ is a quasi-minimizer for the functional
$$u\mapsto \frac12\int_{\R^d}|\nabla u|^2\,dx-\int_{\R^d}\lambda_k(\Omega)u_k u\,dx.$$
%
%
Since $u_k$ is bounded by \eqref{davies218}, the claim follows by Theorem \ref{cirm03}.
\end{proof}

\section{Shape supersolutions of spectral functionals}\label{supersol}
\begin{deff}
We say that the set $\Omega\subset\R^d$ is a \emph{shape supersolution} for the functional $\F:\mathcal{A}\to\R$, defined on the class of Lebesgue measurable sets $\mathcal{A}$, if $\Omega$ satisfies 
$$\mathcal{F}(\Omega)\le \mathcal{F}(\widetilde\Omega),\qquad \forall \widetilde\Omega\supset\Omega.$$
\end{deff}

\begin{oss}\label{bvdp02}
\begin{itemize}
\item If $\Om^*$ is a shape supersolution for $\mathcal{F}+\Lambda|\cdot|$, for some $\Lambda>0$, then for every $\Lb' > \Lb$ the set $\Om^*$
 is the unique solution of 
 $$\min\Big\{\mathcal{F}(\Om)+\Lb'|\Om| :\ \Om\ \hbox{Lebesgue measurable}, \;\; \Om \supset \Om^*\Big\}.$$

\item If the functional $\mathcal{F}$ is non-increasing with respect to the inclusion, we have, by Remark \ref{cham1}, that every shape supersolution $\Omega$ of $\mathcal{F}+\Lambda|\cdot|$, where $\Lambda>0$, is also a shape quasi-minimizer.
\end{itemize}
\end{oss}

In Lemma \ref{double3} we showed that the $k^{th}$ eigenfunctions of the the shape quasi-minimizers for $\lambda_k$ are Lipschitz continuous under the assumption $\lambda_k(\Omega)>\lambda_{k-1}(\Omega)$. In the next Theorem, we show that for shape supersolutions of $\lambda_k+\Lambda|\cdot|$ the later assumption can be dropped. 

\begin{teo}\label{thk}
Let $\Omega^\ast\subset\R^d$ be a bounded shape supersolution for the functional $\lambda_k+\Lambda|\cdot|$, for some $\Lambda>0$. Then there is an eigenfunction $u_k\in\widetilde H^1_0(\Omega^\ast)$, normalized in $L^2$ and corresponding to the eigenvalue $\lambda_k(\Omega^\ast)$, which is Lipschitz continuous on $\R^d$. 
\end{teo}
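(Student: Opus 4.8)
The plan is to run an approximation scheme that reduces Theorem \ref{thk} to the simple-eigenvalue case already handled in Lemma \ref{double3}, while keeping the Lipschitz constants uniform thanks to Remark \ref{bvdp01}. Suppose $\lambda_k(\Omega^\ast)$ has multiplicity $l>1$, so that \eqref{cirm01} holds. For $\eps\in(0,1)$ consider the shape optimization problem
$$\min\Big\{(1-\eps)\lambda_k(\Omega)+\eps\lambda_{k-l}(\Omega)+\Lambda'|\Omega|\;:\;\Omega^\ast\subset\Omega\subset\R^d\Big\},$$
for a fixed $\Lambda'>\Lambda$. By the Buttazzo--Dal Maso type existence theory (the functional is monotone and lower semicontinuous for the weak-$\gamma$-convergence, the admissible class is nonempty and the measure penalization gives compactness) this admits a minimizer $\Omega_\eps\supset\Omega^\ast$. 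First I would check that $\Omega_\eps$ is a shape quasi-minimizer for the combined spectral functional $F_\eps(\Omega):=(1-\eps)\lambda_k(\Omega)+\eps\lambda_{k-l}(\Omega)$, using Remark \ref{cham1} and Remark \ref{bvdp02}, and that the eigenvalues $\lambda_j(\Omega_\eps)$ stay bounded independently of $\eps$ (comparison with $\Omega^\ast$ bounds $F_\eps(\Omega_\eps)$ from above; hence $\lambda_k(\Omega_\eps)\le C$, since $\Omega^\ast\subset\Omega_\eps$ gives $\lambda_{k-l}\le\lambda_k$ anyway).

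The key point is that for $\Omega_\eps$ the relevant spectral gap \emph{does not vanish}: because of the strictly positive coefficient $\eps$ in front of $\lambda_{k-l}$, any external perturbation of $\Omega_\eps$ that would bring $\lambda_{k-l}$ up toward $\lambda_k$ is penalized, and one obtains an $\eps$-dependent but positive lower bound on $\lambda_k(\Omega_\eps)-\lambda_{k-l}(\Omega_\eps)$ — more precisely, the min-max inequality forces $\lambda_k(\Omega_\eps)>\lambda_{k-l}(\Omega_\eps)$ once $\eps>0$, and in particular the top eigenvalue $\lambda_k(\Omega_\eps)$ is, among $\lambda_{k-l+1},\dots,\lambda_k$, attained with a gap above $\lambda_{k-l}$. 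Combining this with the quasi-minimality of $\Omega_\eps$ and Lemma \ref{measl} (applied with the gap between $\lambda_k$ and $\lambda_{k-l}$, which is the level below the whole cluster), I get, for each $\eps$, that a normalized $k$-th eigenfunction $u_k^\eps\in\widetilde H^1_0(\Omega_\eps)$ is a quasi-minimizer for a functional of the form $u\mapsto \tfrac12\int|\nabla u|^2-\int\lambda_k(\Omega_\eps)u_k^\eps u$, exactly as in the proof of Lemma \ref{double3}; Theorem \ref{cirm03} then gives Lipschitz continuity of $u_k^\eps$ with constant depending only on $d$, $|\Omega_\eps|$ and $\lambda_k(\Omega_\eps)$ (Remark \ref{bvdp01}), hence \emph{uniform in $\eps$} thanks to the uniform bounds on measure and eigenvalues.

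Finally I would pass to the limit $\eps\to0$. The sets $\Omega_\eps$ all contain $\Omega^\ast$ and have uniformly bounded measure, so up to a subsequence $\Omega_\eps$ weak-$\gamma$-converges, and since $\Omega^\ast\subset\Omega_\eps$ one upgrades this to $\gamma$-convergence (Remark on $\gamma$ vs weak-$\gamma$). The $\gamma$-limit $\Omega_0$ satisfies $\Omega^\ast\subset\Omega_0$ and, by $\gamma$-continuity of all $\lambda_j$ together with the optimality of $\Omega^\ast$ for $\lambda_k+\Lambda|\cdot|$ and minimality of $\Omega_\eps$, a chain of inequalities forces $\lambda_k(\Omega_0)=\lambda_k(\Omega^\ast)$, $|\Omega_0|=|\Omega^\ast|$, so in fact $\widetilde H^1_0(\Omega_0)=\widetilde H^1_0(\Omega^\ast)$. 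The functions $u_k^\eps$ are uniformly Lipschitz, uniformly bounded (by \eqref{davies218}) and supported in a fixed bounded region, so by Ascoli--Arzel\`a a subsequence converges uniformly and weakly in $H^1$ to some $u_k$, which is Lipschitz with the uniform constant; the Mosco/$\gamma$-convergence guarantees $u_k\in\widetilde H^1_0(\Omega^\ast)$, and passing to the limit in the eigenvalue equation and in the normalization shows $u_k$ is a normalized $k$-th eigenfunction of $\Omega^\ast$. The main obstacle I anticipate is the bookkeeping around the spectral gap: verifying quantitatively that the coefficient $\eps$ really yields $\lambda_k(\Omega_\eps)>\lambda_{k-l}(\Omega_\eps)$ (so that Lemma \ref{measl} applies with $l$ replaced by the size of the cluster strictly above $\lambda_{k-l}$) and, in the limit, ruling out that the eigenvalues collapse in a way that destroys the identification $\lambda_k(\Omega_0)=\lambda_k(\Omega^\ast)$ — this is exactly where the monotonicity $\Omega^\ast\subset\Omega_\eps$ and the strict penalization $\Lambda'>\Lambda$ must be used carefully.
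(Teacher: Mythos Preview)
There is a genuine gap in the central step. You propose to penalize by $\eps\lambda_{k-l}$ and then to invoke Lemma \ref{measl} on $\Omega_\eps$ with the gap $\lambda_k(\Omega_\eps)-\lambda_{k-l}(\Omega_\eps)$, concluding that ``a normalized $k$-th eigenfunction $u_k^\eps$ is a quasi-minimizer''. But this is precisely what Lemma \ref{measl} does \emph{not} give when the cluster above $\lambda_{k-l}$ has size larger than $1$: its conclusion is the estimate \eqref{l1} for some unit vector $\alpha\in\R^{l}$ that depends on the chosen perturbations $v_{k-l+1},\dots,v_k$. The combination $\boldsymbol u_\alpha$ changes with the test $v$, so no inequality of the form \eqref{cirm07} for a fixed eigenfunction follows; see the remark right after Lemma \ref{measl}. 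Only in the simple case $l=1$ (Remark \ref{meas2}) does one obtain quasi-minimality of $u_k$ itself, which is exactly the content of Lemma \ref{double3}. Your claim that the $\eps$-term ``forces $\lambda_k(\Omega_\eps)>\lambda_{k-l}(\Omega_\eps)$'' is also not justified, and in any case would not suffice: what is needed is $\lambda_k(\Omega_\eps)>\lambda_{k-1}(\Omega_\eps)$, and nothing in your construction rules out $\lambda_k(\Omega_\eps)=\lambda_{k-1}(\Omega_\eps)$.

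The paper's argument is designed around this obstruction. It penalizes with $\eps\lambda_{k-1}$ (not $\lambda_{k-l}$) and then runs a dichotomy: either for a sequence $\eps_n\to0$ the minimizers satisfy $\lambda_k(\Omega_{\eps_n})>\lambda_{k-1}(\Omega_{\eps_n})$, and one is genuinely in the simple case of Lemma \ref{double3} (then your limiting argument applies, with uniform Lipschitz constants from Remark \ref{bvdp01}); or for some $\eps_0$ one has $\lambda_k(\Omega_{\eps_0})=\lambda_{k-1}(\Omega_{\eps_0})$, and the uniqueness in Remark \ref{bvdp02} forces $\Omega_{\eps_0}=\Omega^\ast$, so $\Omega^\ast$ becomes a supersolution for $\lambda_{k-1}+C\Lambda|\cdot|$ and one iterates downward. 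This descent terminates in at most $k$ steps. Your scheme, as written, tries to bypass this dichotomy by jumping directly below the whole cluster, and that is where it breaks.
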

\begin{proof}
We first note that if $\lambda_k(\Omega^\ast)>\lambda_{k-1}(\Omega^\ast)$, then the claim follows by Lemma \ref{double3}. Suppose now that $\lambda_k(\Omega^\ast)=\lambda_{k-1}(\Omega^\ast)$. For every $\eps\in(0,1)$ consider the problem
\begin{equation}\label{thk1}
\min\Big\{(1-\eps)\lambda_k(\Omega)+\eps\lambda_{k-1}(\Omega)+2\Lambda|\Omega|:\ \Omega\supset\Omega^\ast\Big\}.
\end{equation} 
We consider the following two cases:
\begin{enumerate}[(i)]
\item Suppose that there is a sequence $\eps_n\to0$ and a sequence $\Omega_{\eps_n}$ of corresponding minimizers for \eqref{thk1} such that $\lambda_k(\Omega_{\eps_n})>\lambda_{k-1}(\Omega_{\eps_n})$. For each $n\in\N$, $\Omega_{\eps_n}$ is a shape supersolution for the functional $\lambda_k+2(1-\eps_n)^{-1}\Lambda|\cdot|$ and so, by Lemma \ref{double3}, we have that for each $n\in\N$ the normalized eigenfunctions $u_k^{n}\in \widetilde H^1_0(\Omega_{\eps_n})$, corresponding to $\lambda_k(\Omega_{\eps_n})$, are Lipschitz continuous on $\R^d$.  We now prove that $\Omega_{\eps_n}$ $\gamma$-converges to $\Omega^*$ as $n\to\infty$. Indeed, by \cite[Proposition 5.12]{bubuve}, $\Omega_{\eps_n}$ are all contained in some ball $B_R$ with $R$ big enough. Thus, there is a weak-$\gamma$-convergent subsequence of $\Omega_{\eps_n}$ and let $\widetilde\Omega$ be its limit. Then $\widetilde\Omega$ is a solution of the problem 
\begin{equation}\label{thk2}
\min\Big\{\lambda_k(\Omega)+2\Lambda|\Omega|:\ \Omega\supset\Omega^\ast\Big\}.
\end{equation}   
On the other hand, by Remark \ref{bvdp02} we have that $\Omega^\ast$ is the unique solution of \eqref{thk2} and so, $\widetilde\Omega=\Omega^\ast$. Since the weak $\g$-limit $\Om^*$ satisfies $ \Omega^\ast\subset\Omega_{\eps_n}$ for every $n\in\N$, then $\Omega_{\eps_n}$ $\gamma$-converges to $\Om^*$. By the metrizability of the $\gamma$-convergence, we have that $\Omega^\ast$ is the $\gamma$-limit of $\Omega_{\eps_n}$ as $n\to\infty$. As a consequence, we have that $\lambda_k(\Omega_{\eps_n})\to\lambda_k(\Omega^\ast)$ and by Remark \ref{bvdp01} we have that the sequence $u_k^{n}$ is uniformly Lipschitz.

Then, we can suppose that, up to a subsequence $u_k^n\to u$ uniformly and weakly in $H^1_0(B_R)$, for some $u\in H^1_0(B_R)$, Lipschitz continuous on $\R^d$. By the weak convergence of $u_k^n$, we have that for each $v\in H^1_0(\Omega^\ast)$ 
$$\int \nabla u\cdot\nabla v\,dx=\lim_{n\to\infty}\int\nabla u_k^n\cdot\nabla v\,dx=\lim_{n\to\infty}\lambda_k(\Omega_{\eps_n})\int u_k^n v\,dx=\lambda_k(\Omega^\ast)\int u v\,dx.$$
By the $\gamma$-convergence of $\Omega_{\eps_n}$, we have that $u\in H^1_0(\Omega^\ast)$ and so $u$ is a $k^{th}$ eigenfunction of the Dirichlet Laplacian on $\Omega^\ast$. 
\item Suppose that there is some $\eps_0\in(0,1)$ such that $\Omega_{\eps_0}$ is a solution of \eqref{thk1}  and $\lambda_k(\Omega_{\eps_0})=\lambda_{k-1}(\Omega_{\eps_0})$. Then, $\Omega_{\eps_0}$ is also a solution of \eqref{thk2} and, by Remark \ref{bvdp02},  $\Omega_{\eps_0}=\Omega^\ast$. Thus we obtain that $\Omega^\ast$ is a shape supersolution for $\lambda_{k-1}+2\eps_0^{-1}\Lambda|\cdot|$. If we have 
$$\lambda_k(\Omega^\ast)=\lambda_{k-1}(\Omega^\ast)>\lambda_{k-2}(\Omega^\ast),$$
then, we apply Lemma \ref{double3} obtaining that each eigenfunction corresponding to $\lambda_{k-1}(\Omega^\ast)$ is Lipschitz continuous on $\R^d$. On the other hand, if 
$$\lambda_k(\Omega^\ast)=\lambda_{k-1}(\Omega^\ast)=\lambda_{k-2}(\Omega^\ast),$$
then we consider, for each $\eps\in(0,1)$, the problem
\begin{equation}\label{thk4}
\min\Big\{(1-\eps_0)\lambda_k(\Omega)+\eps_0\big[(1-\eps)\lambda_{k-1}(\Omega)+\eps\lambda_{k-2}(\Omega)\big]+3\Lambda|\Omega|:\ \Omega\supset\Omega^\ast\Big\}.
\end{equation} 
One of the following two situations may occur:
\begin{enumerate}[(a)]
\item There is a sequence $\eps_n\to0$ and a corresponding sequence $\Omega_{\eps_n}$ of minimizers of \eqref{thk4} such that 
$$\lambda_{k-1}(\Omega_{\eps_n})>\lambda_{k-2}(\Omega_{\eps_n}).$$
\item There is some $\eps_1\in(0,1)$ and $\Omega_{\eps_1}$, solution of \eqref{thk4}, such that
$$\lambda_{k-1}(\Omega_{\eps_1})=\lambda_{k-2}(\Omega_{\eps_1}).$$
\end{enumerate}
If the case $(a)$ occurs, then since $\Omega_{\eps_n}$ is a shape quasi-minimizer for $\lambda_{k-1}$, by Lemma \ref{double3} we obtain the Lipschitz continuity of the eigenfunctions $u_{k-1}^{n}$, corresponding to $\lambda_{k-1}$ on $\Omega_{\eps_n}$. Repeating the argument from $(i)$, we obtain that $\Omega_{\eps_n}$ $\gamma$-converges to $\Omega^\ast$ and that the sequence of eigenfunctions $u_{k-1}^{n}\in H^1_0(\Omega_{\eps_n})$ uniformly converges to an eigenfunctions $u_{k-1}\in H^1_0(\Omega^\ast)$, corresponding to $\lambda_k(\Omega^\ast)=\lambda_{k-1}(\Omega^\ast)$. Since the Lipschitz constants of $u_{k-1}^{n}$ are uniform, we have the conclusion.

If the case $(b)$ occurs, then reasoning as in the case $(ii)$, we have that $\Omega_{\eps_1}=\Omega^\ast$. Indeed, we have
\begin{equation}\label{thk5}
\begin{array}{ll}
(1-\eps_0)\lambda_k(\Omega_{\eps_1})+\eps_0\lambda_{k-1}(\Omega_{\eps_1})+3\Lambda|\Omega_{\eps_1}|\\
\qquad=(1-\eps_0)\lambda_k(\Omega_{\eps_1})+\eps_0\big[(1-\eps_1)\lambda_{k-1}(\Omega_{\eps_1})+\eps_1\lambda_{k-2}(\Omega_{\eps_1})\big]+3\Lambda|\Omega_{\eps_1}|\\
\qquad\le(1-\eps_0)\lambda_k(\Omega^\ast)+\eps_0\big[(1-\eps_1)\lambda_{k-1}(\Omega^\ast)+\eps_1\lambda_{k-2}(\Omega^\ast)\big]+3\Lambda|\Omega^\ast|\\
\qquad=(1-\eps_0)\lambda_k(\Omega^\ast)+\eps_0\lambda_{k-1}(\Omega^\ast)+3\Lambda|\Omega^\ast|.
\end{array}
\end{equation}
On the other hand, we supposed that $\Omega^\ast$ is a solution of \eqref{thk1} with $\eps=\eps_0$ and so, it is the unique minimizer of the problem
\begin{equation}\label{thk6}
\min\Big\{(1-\eps_0)\lambda_k(\Omega)+\eps_0\lambda_{k-1}(\Omega)+3\Lambda|\Omega|:\ \Omega\supset\Omega^\ast\Big\}.
\end{equation} 
Thus, we have $\Omega^\ast=\Omega_{\eps_1}$. We proceed considering, for any $\eps\in(0,1)$, the problem  
\begin{equation}\label{thk7}
\begin{array}{ll}
\min\Big\{(1-\eps_0)\lambda_k(\Omega)+\eps_0(1-\eps_1)\lambda_{k-1}(\Omega)\\
\qquad\qquad+\eps_0\eps_1\big[(1-\eps)\lambda_{k-2}(\Omega)+\eps\lambda_{k-3}(\Omega)\big]+4\Lambda|\Omega|:\ \Omega\supset\Omega^\ast\Big\},
\end{array}
\end{equation} 
and repeat the procedure described above. We note that this procedure stops after at most $k$ iterations. Indeed, if $\Omega^\ast$ is a shape quasi-minimizer for $\lambda_1$ and $\lambda_k(\Omega^\ast)=\dots=\lambda_1(\Omega^\ast)$, then we obtain the result applying Lemma \ref{double3} for $k=1$.
\end{enumerate}
\end{proof}

As a consequence, we obtain the following result for the optimal set for the $k^{th}$ Dirichlet eigenvalue. 
\begin{cor}
Let $\Omega$ be a solution of the problem
$$\min\Big\{\lambda_k(\Omega):\ \Omega\subset\R^d,\ \Omega\ \hbox{quasi-open},\ |\Omega|=1\Big\}.$$
Then there exists an eigenfunction $u_k\in H^1_0(\Omega)$, corresponding to the eigenvalue $\lambda_k(\Omega)$, which is Lipschitz continuous on $\R^d$.
\end{cor}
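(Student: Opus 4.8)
The strategy is to recognize the constrained minimizer as a bounded shape supersolution of the penalized functional $\lambda_k+\Lambda|\cdot|$ and then to apply Theorem~\ref{thk} directly. For the supersolution property I would argue by scaling: for any measurable $\widetilde\Omega\supset\Omega$ one has $|\widetilde\Omega|\ge|\Omega|=1$, and testing the minimality of $\Omega$ against the rescaled set $|\widetilde\Omega|^{-1/d}\widetilde\Omega$ (which has unit measure) gives, using $\lambda_k(t\,\widetilde\Omega)=t^{-2}\lambda_k(\widetilde\Omega)$,
$$\lambda_k(\widetilde\Omega)\ \ge\ |\widetilde\Omega|^{-2/d}\,\lambda_k(\Omega).$$
Choosing $\Lambda:=\tfrac{2}{d}\,\lambda_k(\Omega)>0$, the function $v\mapsto v^{-2/d}\lambda_k(\Omega)+\Lambda v$ has non-negative derivative on $[1,\infty)$ (its derivative vanishes at $v=1$ and is increasing in $v$), whence
$$\lambda_k(\widetilde\Omega)+\Lambda|\widetilde\Omega|\ \ge\ |\widetilde\Omega|^{-2/d}\lambda_k(\Omega)+\Lambda|\widetilde\Omega|\ \ge\ \lambda_k(\Omega)+\Lambda\ =\ \lambda_k(\Omega)+\Lambda|\Omega|.$$
This is exactly the defining inequality of a shape supersolution for $\lambda_k+\Lambda|\cdot|$. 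Boundedness of $\Omega$ is not part of this computation; I would simply invoke the known fact that volume-constrained minimizers of $\lambda_k$ are bounded (see \cite{bulbk,mbound}). Theorem~\ref{thk} then produces an $L^2$-normalized eigenfunction $u_k\in\widetilde H^1_0(\Omega)$ for $\lambda_k(\Omega)$ that is Lipschitz continuous on $\R^d$.

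The only point needing a little care is that the statement asks for $u_k\in H^1_0(\Omega)$ (the capacitary space), whereas $\lambda_k$ and Theorem~\ref{thk} are formulated with $\widetilde H^1_0$. I would handle this by first replacing $\Omega$ with a better representative: by the facts recalled in Section~\ref{prel}, there is a largest quasi-open set $\omega\subset\Omega$ (a.e.) with $H^1_0(\omega)=\widetilde H^1_0(\omega)=\widetilde H^1_0(\Omega)$. Since $|\omega|\le1$ and $\lambda_k(H^1_0(\omega))=\lambda_k(\widetilde H^1_0(\Omega))\le\lambda_k(H^1_0(\Omega))$, a further scaling forces $|\omega|=1$ and all these eigenvalues to coincide, so $\omega$ is again a minimizer of the original problem and satisfies $H^1_0(\omega)=\widetilde H^1_0(\omega)$. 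Working with this representative, the Lipschitz eigenfunction provided by Theorem~\ref{thk} lies in $\widetilde H^1_0(\omega)=H^1_0(\omega)$, which is the asserted conclusion.

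There is no genuine obstacle inside the corollary itself: the scaling computation is elementary and the representative bookkeeping is routine given Section~\ref{prel}. All the substance is in the results already established — Theorem~\ref{thk} (the $\gamma$-convergence/approximation scheme that removes the spectral-gap assumption $\lambda_k>\lambda_{k-1}$) together with Theorem~\ref{main} and Theorem~\ref{cirm03} (the Alt-Caffarelli-type quasi-minimality estimate yielding the Lipschitz bound). So the hard part was done before reaching this corollary; here one only has to notice that the constraint $|\Omega|=1$ can be traded for a volume penalization with an explicitly controlled multiplier.
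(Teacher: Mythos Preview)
Your proposal is correct and follows the paper's intended approach: the corollary is stated without proof as an immediate consequence of Theorem~\ref{thk}, and the explicit scaling argument you give (converting the measure constraint into a penalization $\lambda_k+\Lambda|\cdot|$) is precisely the computation the paper carries out later in the proof of Theorem~\ref{thf.100} for general $F$. Your handling of boundedness and of the $H^1_0$ versus $\widetilde H^1_0$ distinction just makes explicit details the paper leaves implicit.
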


%

\begin{oss}\label{thf}
We note that Theorem \ref{thk} can be used to obtain information for the supersolutions of general spectral functionals. Let $\mathcal{F}:\mathcal{A}\to\R$ be a functional defined on the family of sets of finite measure $\mathcal{A}$ and suppose that there exist non-negative real numbers $c_k,\ k\in\N$, such that for each couple of sets $\Omega\subset\widetilde\Omega\subset\R^d$ of finite measure we have
$$c_k\big(\lambda_k(\Omega)-\lambda_{k}(\widetilde\Omega)\big)\le \mathcal{F}(\Omega)-\mathcal{F}(\widetilde\Omega).$$
If $\Omega$ is a shape supersolution for $\mathcal{F}+\Lambda|\cdot|$, then for any $k\in\N$ such that $c_k>0$, there is an eigenfunction $u_k\in H^1_0(\Omega)$, normalized in $L^2$ and corresponding to $\lambda_k(\Omega)$, which is Lipschitz continuous on $\R^d$. Indeed, it is enough to note that, whenever $c_k>0$, we have 
$$\lambda_k(\Omega)-\lambda_k(\widetilde\Omega)\le c_k^{-1}\left(\mathcal{F}(\Omega)-\mathcal{F}(\widetilde\Omega)\right)\le c_k^{-1}\Lambda|\widetilde\Omega\setminus\Omega|.$$
The conclusion follows by Theorem \ref{thk}. 
\end{oss}

In order to prove a regularity result which involves all the eigenfunction corresponding to the eigenvalues that appear in functionals of the form $F\big(\lb_{k_1}(\Omega), \dots, \lb_{k_p}(\Omega)\big)$, we need the following preliminary result.

\begin{lemma}\label{double3treno2}
Let $\Omega^\ast\subset\R^d$ be a shape supersolution for the functional $$\Omega\mapsto \lambda_k(\Omega)+\lambda_{k+1}(\Omega)+\dots+\lambda_{k+p}(\Omega)+\Lambda|\Omega|,$$
for some constant $\Lambda>0$. Then there are $L^2$-orthonormal  eigenfunctions $u_k,\dots, u_{k+p}\in\widetilde H^1_0(\Omega^\ast)$, corresponding to the eigenvalues $\lambda_k(\Omega^\ast),\dots,\lambda_{k+p}(\Omega^\ast)$, which are Lipschitz continuous on $\R^d$. 
\end{lemma}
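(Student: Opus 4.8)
\medskip
\noindent\textbf{Proof strategy.} The plan is to combine a ``dropping of terms'' reduction with an induction on $p$, handling most configurations by Theorem~\ref{thk} plus a Gram--Schmidt correction, and to close the genuinely degenerate case by an approximation argument modelled on the proof of Theorem~\ref{thk}. First I would observe that if $\Om^*$ is a shape supersolution for $\sum_{i=k}^{k+p}\lb_i+\Lambda|\cdot|$, then it is also a shape supersolution for $\sum_{i\in I}\lb_i+\Lambda|\cdot|$ for every interval $I\sq\{k,\dots,k+p\}$: dropping the largest index is allowed since $\lb_{k+p}(\widetilde\Om)-\lb_{k+p}(\Om^*)\le0$ for $\widetilde\Om\supset\Om^*$, so that $\sum_{i=k}^{k+p-1}\lb_i(\Om^*)+\Lambda|\Om^*|\le\sum_{i=k}^{k+p-1}\lb_i(\widetilde\Om)+\Lambda|\widetilde\Om|$, and dropping the smallest index is allowed for the same reason; one iterates, and the same works for positively weighted sums $\sum a_i\lb_i$. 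In particular $\Om^*$ is a shape supersolution for $\lb_i+\Lambda|\cdot|$ for each single $i\in\{k,\dots,k+p\}$, so Theorem~\ref{thk} already provides, for each such $i$, \emph{one} Lipschitz eigenfunction for $\lb_i(\Om^*)$; the content of the lemma is to make them an $L^2$-orthonormal family, which is an issue only in the presence of multiplicities.

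I would then argue by induction on $p$, the case $p=0$ being Theorem~\ref{thk}. For $p\ge1$, by the reduction $\Om^*$ is a shape supersolution both for $\sum_{i=k}^{k+p-1}\lb_i+\Lambda|\cdot|$ and for $\lb_{k+p}+\Lambda|\cdot|$; the inductive hypothesis gives $L^2$-orthonormal Lipschitz eigenfunctions $u_k,\dots,u_{k+p-1}$ for $\lb_k(\Om^*),\dots,\lb_{k+p-1}(\Om^*)$, and Theorem~\ref{thk} gives a Lipschitz eigenfunction $\hat u$ for $\lb_{k+p}(\Om^*)$. Let $V$ be the span of those among $u_k,\dots,u_{k+p-1}$ lying in the $\lb_{k+p}(\Om^*)$-eigenspace. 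If $\hat u\notin V$, subtracting from $\hat u$ its $L^2$-projection onto $\mathrm{span}\{u_k,\dots,u_{k+p-1}\}$ and normalizing produces an eigenfunction $u_{k+p}$ for $\lb_{k+p}(\Om^*)$ which is a linear combination of eigenfunctions of that same eigenvalue (still an eigenfunction), is Lipschitz (a finite combination of Lipschitz functions), and is $L^2$-orthogonal to $u_k,\dots,u_{k+p-1}$; together they form the desired family.

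The hard case is $\hat u\in V$, i.e.\ when Theorem~\ref{thk} fails to reach a \emph{new} direction in the $\lb_{k+p}(\Om^*)$-eigenspace (so necessarily $\lb_{k+p}(\Om^*)=\lb_{k+p-1}(\Om^*)$); this is where I expect the main difficulty. One route is to mimic the proof of Theorem~\ref{thk}: for $\eps\in(0,1)$ take a minimizer $\Om_\eps$ of
$$\min\Big\{\sum_{i=k}^{k+p-1}\lb_i(\Om)+(1-\eps)\lb_{k+p}(\Om)+\eps\,\lb_{k+p-1}(\Om)+2\Lambda|\Om|\ :\ \Om\supset\Om^*\Big\}.$$
If along some $\eps_n\to0$ one has $\lb_{k+p}(\Om_{\eps_n})>\lb_{k+p-1}(\Om_{\eps_n})$, then $\Om_{\eps_n}$ is a shape supersolution for the corresponding positively-weighted sum whose top eigenvalue now has a gap, so (as in the preceding step) its first $k+p$ eigenfunctions can be chosen $L^2$-orthonormal and Lipschitz; since $\Om_{\eps_n}\supset\Om^*$ weak-$\g$-converges to the unique minimizer of $\lb_{k+p}+2\Lambda|\cdot|$ over supersets of $\Om^*$, namely $\Om^*$, this is a $\g$-convergence, the eigenvalues converge, the Lipschitz constants are uniform by Remark~\ref{bvdp01}, and a limit of these families works on $\Om^*$. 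If instead every minimizer satisfies $\lb_{k+p}(\Om_\eps)=\lb_{k+p-1}(\Om_\eps)$ for all small $\eps$, then by uniqueness (Remark~\ref{bvdp02}) $\Om_\eps=\Om^*$, and arguing as in case (ii) of Theorem~\ref{thk} one finds that $\Om^*$ is a shape supersolution for $\sum_{i=k}^{k+p-2}\lb_i+(1+\eps)\lb_{k+p-1}+2\Lambda|\cdot|$; one iterates, perturbing $\lb_{k+p-1}$ towards $\lb_{k+p-2}$, etc., decreasing the index until one reaches the bottom index $j$ of the $\lb_{k+p}(\Om^*)$-eigenspace, where $\lb_{j-1}(\Om^*)<\lb_j(\Om^*)$ and Lemma~\ref{double3} produces a Lipschitz eigenfunction for $\lb_j(\Om^*)=\lb_{k+p}(\Om^*)$. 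An alternative route, perhaps cleaner, is to prove directly a ``sum version'' of Lemma~\ref{double3}: fix \emph{any} $L^2$-orthonormal family $u_j,\dots,u_{j+q-1}$ in the $\lb_{k+p}(\Om^*)$-eigenspace and, running the computation of Lemma~\ref{measl} for the functional $\sum_i\lb_i$ (whose invariance under rotations of the eigenspace is exactly what lets one fix the orthonormal family \emph{before} choosing the perturbation), show that each $u_i$ is a quasi-minimizer and hence Lipschitz by Theorem~\ref{cirm03}.

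I expect the main obstacle in either route to be the same analytic point: controlling how the eigenvalues \emph{below} the block move under the perturbation $\Om^*\rightsquigarrow\Om^*\cup B_r(x)$, i.e.\ a bound of the form $\sum_{i<j}\big(\lb_i(\Om^*)-\lb_i(\Om^*\cup B_r(x))\big)\le C r^{d}$ uniformly in $x$, which is the error term that appears when one tests the sum of Rayleigh quotients against the lowest eigenfunctions of $\Om^*$; in the $\eps$-approximation route the analogous difficulty is the uniformity of the Lipschitz constants as $\eps_n\to0$ together with the bookkeeping, through the nested perturbations, that each newly produced Lipschitz eigenfunction is genuinely $L^2$-independent of the previously constructed ones. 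Once this is in place, the remaining steps (the Rayleigh-quotient expansions, the Gram--Schmidt corrections, and the fact that eigenfunctions attached to distinct eigenvalues are automatically $L^2$-orthogonal, so that the families produced block by block assemble into the required $u_k,\dots,u_{k+p}$) are routine and parallel the arguments already used for Theorem~\ref{thk} and Lemma~\ref{double3}.
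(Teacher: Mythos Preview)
Your proposal contains the right ingredients (the ``dropping of terms'' reduction, Lemma~\ref{double3}, and an $\eps$-perturbation modelled on Theorem~\ref{thk}) and could probably be pushed through, but it is substantially more involved than the paper's proof and misidentifies the main difficulty.

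The paper's argument is organised around a single dichotomy on the \emph{bottom} of the block: whether or not $\lambda_k(\Omega^\ast)>\lambda_{k-1}(\Omega^\ast)$. If there is a gap at the bottom (Step~1), one partitions $\{k,\dots,k+p\}$ into maximal clusters of equal eigenvalues; each cluster begins at some index $k_i$ with $\lambda_{k_i}(\Omega^\ast)>\lambda_{k_i-1}(\Omega^\ast)$. Since, by your own dropping observation, $\Omega^\ast$ is a shape quasi-minimizer for each single $\lambda_{k_i}$, Lemma~\ref{double3} gives that the \emph{entire} eigenspace for $\lambda_{k_i}(\Omega^\ast)$ consists of Lipschitz functions. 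One then simply picks an orthonormal basis cluster by cluster --- no induction on $p$, no Gram--Schmidt salvage of a $\hat u$, no worry about whether Theorem~\ref{thk} hits a ``new direction''. If instead $\lambda_k(\Omega^\ast)=\lambda_{k-1}(\Omega^\ast)$ (Step~2), the paper perturbs the \emph{bottom} index, replacing $\lambda_k$ by $(1-\eps)\lambda_k+\eps\lambda_{k-1}$, exactly as in Theorem~\ref{thk}: either some sequence of minimizers $\Omega_{\eps_n}$ has a bottom gap, in which case Step~1 applies to $\Omega_{\eps_n}$ and one passes to the limit with uniform constants via Remark~\ref{bvdp01}; or every minimizer coincides with $\Omega^\ast$ and one iterates further down.

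Your perturbation from the top, replacing $\lambda_{k+p}$ by $(1-\eps)\lambda_{k+p}+\eps\lambda_{k+p-1}$, is the wrong end: a top gap on $\Omega_{\eps_n}$ does not put you in Step~1 for $\Omega_{\eps_n}$, so you are forced to invoke the inductive hypothesis on each $\Omega_{\eps_n}$, which is exactly what generates the uniformity-through-nested-perturbations bookkeeping you worry about (and which the paper's route avoids entirely). Moreover, the ``main obstacle'' you flag --- a bound of the form $\sum_{i<j}\big(\lambda_i(\Omega^\ast)-\lambda_i(\Omega^\ast\cup B_r(x))\big)\le Cr^d$ --- never arises: one never tests the \emph{sum} of Rayleigh quotients, one always reduces to a single $\lambda_j$ with a gap from below and appeals to Lemma~\ref{double3}. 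The genuine difficulty is solely the absence of a gap at the bottom of the block, and it is handled by the downward $\eps$-perturbation, not by any estimate on the lower part of the spectrum.
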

\begin{proof}
We prove the lemma in two steps.

\emph{Step 1.} Suppose that $\lambda_{k}(\Omega^\ast)>\lambda_{k-1}(\Omega^\ast)$. We first note that, by Lemma \ref{double3}, if $j\in\{k,k+1,\dots,k+p\}$ is such that $\lambda_j(\Omega^\ast)>\lambda_{j-1}(\Omega^\ast)$, then any eigenfunction, corresponding to the eigenvalue $\lambda_j(\Omega^\ast)$, is Lipschitz continuous on $\R^d$. Let us now divide the eigenvalues $\lambda_k(\Omega^\ast),\dots,\lambda_{k+p}(\Omega^\ast)$ into clusters of equal consecutive eigenvalues. There exists    $k= k_1< k_2< \dots< k_s\le k+p$ such that
\begin{align*}
\lambda_{k-1}(\Omega^\ast)&<\lambda_{k_1}(\Omega^\ast)=\dots=\lambda_{k_2-1}(\Omega^\ast)\\
&<\lambda_{k_2}(\Omega^\ast)=\dots=\lambda_{k_3-1}(\Omega^\ast)\\
&\dots\\
&<\lambda_{k_s}(\Omega^\ast)=\dots=\lambda_{k+p}(\Omega^\ast).
\end{align*}
 Then, by the above observation, the eigenspaces corresponding to the eigenvalues
$$\lambda_{k_1}(\Omega^\ast),\lambda_{k_2}(\Omega^\ast),\dots,\lambda_{k+p}(\Omega^\ast),$$
consist on Lipschitz continuous functions. In particular, there exists a  sequence of consecutive eigenfunctions $u_k,\dots,u_{k+p}$ satisfying the claim of the lemma.  

\emph{Step 2.}  Suppose now that $\lambda_{k}(\Omega^\ast)=\lambda_{k-1}(\Omega^\ast)$. For each $\eps\in(0,1)$ we consider the problem 
\begin{equation}\label{thf.1e3}
\min\Big\{\sum_{j=1}^p\lambda_{k+j}(\Omega)+(1-\eps)\lambda_k(\Omega)+\eps\lambda_{k-1}(\Omega)+2\Lambda|\Omega|:\ \Omega^\ast\subset\Omega\subset\R^d\Big\}.
\end{equation}
As in Theorem \ref{thk}, we have that at least one of the following cases occur:
\begin{enumerate}[(i)]
\item There is a sequence $\eps_n\to0$ and a corresponding sequence $\Omega_{\eps_n}$ of minimizers of \eqref{thf.1e3} such that, for each $n\in\N$, 
$$\lambda_{k}(\Omega_{\eps_n})>\lambda_{k-1}(\Omega_{\eps_n}).$$

\item There is some $\eps_0\in(0,1)$ for which there is $\Omega_{\eps_0}$ a solution of \eqref{thf.1e3} such that
$$\lambda_{k}(\Omega_{\eps_0})=\lambda_{k-1}(\Omega_{\eps_0}).$$
\end{enumerate}
   In the first case $\Omega_{\eps_n}$ is a shape supersolution for the functional $$\Omega\mapsto\lambda_k(\Omega)+\dots+\lambda_{k+p}(\Omega)+(1-\eps_n)^{-1}\Lambda|\Omega|.$$
Thus, by \emph{Step 1}, there are orthonormal eigenfunctions $u_k^n,\dots,u_{k+p}^n\in H^1_0(\Omega_{\eps_n})$, which are Lipschitz continuous on $\R^d$. Using the same approximation argument from Theorem \ref{thk}, we obtain the claim. 
   In the second case, reasoning again as in Theorem \ref{thk}, we have that $\Omega_{\eps_0}=\Omega^\ast$ and we have to consider two more cases. If $\lambda_{k-1}(\Omega^\ast)>\lambda_{k-2}(\Omega^\ast)$, we have the claim by \emph{Step 1}. If $\lambda_{k-1}(\Omega^\ast)=\lambda_{k-2}(\Omega^\ast)$, then we consider the problem 
\begin{equation*}\label{thf.1e3.bis}
\min\Big\{\sum_{j=1}^p\lambda_{k+j}(\Omega)+(1-\eps_0)\lambda_k(\Omega)+\eps_0\big[(1-\eps)\lambda_{k-1}(\Omega)+\eps\lambda_{k-2}(\Omega)\big]+3\Lambda|\Omega|:\ \Omega^\ast\subset\Omega\subset\R^d\Big\},
\end{equation*}
and proceed by repeating the argument above, until we obtain the claim or until we have a functional involving $\lambda_1$, in which case we apply one more time the result from \emph{Step 1}.
\end{proof}

\medskip

Before we state our main result (Theorem \ref{thf.1}), we recall that:
\begin{itemize}
\item for two points $\ x:=(x_1,\dots,x_p)\in\R^p\ $ and $\ y:=(y_1,\dots,y_p)\in\R^p\ $,
we say that $x\ge y\ $ if and only if $\ x_i\ge y_i,\ $ for all $i=1,\dots,p$.
\item we say that a functions $F:\R^p\to\R$ is \emph{bi-Lipschitz in each variable}, if $F$ is Lipschitz and there are positive real constants $c_1,\dots,c_p\in(0,+\infty)$ such that 
\begin{equation}\label{bi.lip}
F(x)-F(y)\ge c_1(x_1-y_1)+\dots+c_p(x_p-y_p),\qquad \forall x,y\in\R^p\ \hbox{ s.t. }\ x\ge y.
\end{equation}
\item we say that we say $F:\R^p\to\R$ is \emph{locally bi-Lipschitz in each variable}, if the inequality \eqref{bi.lip} holds for each $y$ in a neighbourhood of $x$.  
\end{itemize}

\begin{teo}\label{thf.1}
Let $F: \R^p\rightarrow \R$ be an increasing and locally bi-Lipschitz function
in each variable and let $0< k_1< k_2<\dots < k_p$ be natural numbers. 
Then for every bounded shape supersolution $\Omega^\ast$ of the functional 
$$\Om \mapsto F\big(\lb_{k_1}(\Omega), \dots, \lb_{k_p}(\Omega)\big)+\Lambda|\Omega|,$$ 
there exists a sequence of orthonormal eigenfunctions $u_{k_1},\dots, u_{k_p}$, corresponding to the eigenvalues $ \lb_{k_j}(\Omega^\ast)$, $j=1,\dots, p$, which are Lipschitz continuous on $\R^d$. Moreover,
\begin{itemize}
\item
if for some $k_j$ we have $\lb_{k_j}(\Om^\ast)> \lb_{k_j-1}(\Om^\ast)$, then the full eigenspace corresponding to $\lb_{k_j}(\Om^\ast)$ consists only on Lipschitz functions;

\item
if $\lb_{k_j}(\Om^\ast)=\lb_{k_{j-1}}(\Om^\ast)$, then there exist at least $k_j-k_{j-1}+1$ orthonormal Lipschitz eigenfunctions corresponding to $\lb_{k_j}(\Om^\ast)$.
\end{itemize}
\end{teo}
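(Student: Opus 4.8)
The plan is to reduce Theorem~\ref{thf.1} to Lemma~\ref{double3treno2} by a localization-in-the-spectrum argument combined with the bi-Lipschitz hypothesis on $F$, exactly in the spirit of Remark~\ref{thf}. First I would fix a bounded shape supersolution $\Om^\ast$ of $\Om\mapsto F\big(\lb_{k_1}(\Om),\dots,\lb_{k_p}(\Om)\big)+\Lambda|\Om|$. Since $F$ is increasing and locally bi-Lipschitz in each variable near the point $\big(\lb_{k_1}(\Om^\ast),\dots,\lb_{k_p}(\Om^\ast)\big)$, there are constants $c_1,\dots,c_p>0$ and a neighbourhood of that point in which \eqref{bi.lip} holds. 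For $\widetilde\Om\supset\Om^\ast$ the eigenvalues satisfy $\lb_{k_j}(\widetilde\Om)\le\lb_{k_j}(\Om^\ast)$, so for $|\widetilde\Om\setminus\Om^\ast|$ small enough all the $\lb_{k_j}(\widetilde\Om)$ stay in the neighbourhood where \eqref{bi.lip} applies; for $\widetilde\Om$ far from $\Om^\ast$ in measure, the term $\Lambda|\widetilde\Om\setminus\Om^\ast|$ dominates, so supersolutionality of $F+\Lambda|\cdot|$ forces, in all cases, the scalar inequality
\[
\sum_{j=1}^p c_j\big(\lb_{k_j}(\Om^\ast)-\lb_{k_j}(\widetilde\Om)\big)\ \le\ F\big(\lb_{k_1}(\Om^\ast),\dots\big)-F\big(\lb_{k_1}(\widetilde\Om),\dots\big)\ \le\ \Lambda|\widetilde\Om\setminus\Om^\ast|,
\]
possibly after enlarging $\Lambda$. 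Dividing by $\min_j c_j$ this shows $\Om^\ast$ is a shape supersolution of $\Om\mapsto\sum_{j=1}^p\lb_{k_j}(\Om)+\Lambda'|\Om|$ for a suitable $\Lambda'>0$; some care is needed to interpolate between the ``local'' and the ``far'' regimes, which is the only slightly delicate point of this reduction.

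Next I would like to invoke Lemma~\ref{double3treno2}, but that lemma is stated for a block of \emph{consecutive} eigenvalues $\lb_k,\dots,\lb_{k+p}$, whereas here the indices $k_1<\dots<k_p$ need not be consecutive. The remedy is to note that, since each $\lb_{k_j}\le\lb_{k_j+1}\le\dots\le\lb_{k_{j+1}}$, the full sum $\lambda_{k_1}(\Om)+\dots+\lambda_{k_p}(\Om)$ is, up to nonnegative coefficients, dominated by the sum over the consecutive range $\lb_{k_1}(\Om)+\lb_{k_1+1}(\Om)+\dots+\lb_{k_p}(\Om)$, and that being a supersolution for the former (plus volume) implies being a supersolution for the latter: indeed adding the extra eigenvalues $\lb_m$ with $m\in\{k_1,\dots,k_p\}^c\cap[k_1,k_p]$ only adds nonnegative, monotone decreasing (under inclusion) terms, so the supersolution inequality is preserved after, again, adjusting $\Lambda$. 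Thus $\Om^\ast$ is a bounded shape supersolution of $\Om\mapsto\sum_{m=k_1}^{k_p}\lambda_m(\Om)+\Lambda''|\Om|$, and Lemma~\ref{double3treno2} (with $k:=k_1$, $k+p:=k_p$) produces $L^2$-orthonormal Lipschitz eigenfunctions $u_{k_1},\dots,u_{k_p}$ corresponding to $\lambda_{k_1}(\Om^\ast),\dots,\lambda_{k_p}(\Om^\ast)$. This already gives the first, main assertion.

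For the two refined statements I would unwind what Lemma~\ref{double3treno2} actually delivers. If $\lb_{k_j}(\Om^\ast)>\lb_{k_j-1}(\Om^\ast)$, then from the scalar supersolution inequality applied to the single index $k_j$ we get that $\Om^\ast$ is a shape supersolution for $\lambda_{k_j}+\Lambda|\cdot|$ with a \emph{strict} spectral gap below $\lb_{k_j}(\Om^\ast)$; $\Om^\ast$ is then a shape quasi-minimizer for $\lambda_{k_j}$ (Remark~\ref{bvdp02}), so Lemma~\ref{double3} applies and shows that \emph{every} normalized eigenfunction of $\lambda_{k_j}(\Om^\ast)$ is Lipschitz, i.e.\ the whole eigenspace consists of Lipschitz functions. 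If instead $\lb_{k_j}(\Om^\ast)=\lb_{k_{j-1}}(\Om^\ast)$, then all of $\lb_{k_{j-1}}(\Om^\ast),\lb_{k_{j-1}+1}(\Om^\ast),\dots,\lb_{k_j}(\Om^\ast)$ coincide, so the eigenspace of $\lb_{k_j}(\Om^\ast)$ has dimension at least $k_j-k_{j-1}+1$; inside \emph{Step 1} of the proof of Lemma~\ref{double3treno2} the construction yields a string of consecutive Lipschitz eigenfunctions, and restricting it to this cluster produces at least $k_j-k_{j-1}+1$ orthonormal Lipschitz eigenfunctions for $\lb_{k_j}(\Om^\ast)$, as claimed.

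The main obstacle I anticipate is the first reduction: making rigorous the passage from ``$F+\Lambda|\cdot|$ supersolution'' to ``$\sum\lambda_{k_j}+\Lambda'|\cdot|$ supersolution'' when $F$ is only \emph{locally} bi-Lipschitz. One must verify that competitors $\widetilde\Om\supset\Om^\ast$ whose eigenvalues $\lb_{k_j}(\widetilde\Om)$ fall outside the neighbourhood where \eqref{bi.lip} holds cannot violate the scalar inequality — this uses that such competitors must have $|\widetilde\Om\setminus\Om^\ast|$ bounded below by a positive constant (by continuity/semicontinuity of the eigenvalues under measure convergence), so the $\Lambda|\widetilde\Om\setminus\Om^\ast|$ term can absorb the bounded quantity $\sum_j c_j\big(\lb_{k_j}(\Om^\ast)-\lb_{k_j}(\widetilde\Om)\big)$ after enlarging $\Lambda$. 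Everything else is bookkeeping on nonnegative coefficients and monotonicity, plus direct citation of Lemmas~\ref{double3} and~\ref{double3treno2}.
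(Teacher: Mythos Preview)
There is a genuine gap in your reduction from the non-consecutive sum $\sum_{j=1}^p\lambda_{k_j}$ to the consecutive sum $\sum_{m=k_1}^{k_p}\lambda_m$. You claim that ``adding the extra eigenvalues $\lambda_m$ only adds nonnegative, monotone decreasing terms, so the supersolution inequality is preserved after adjusting $\Lambda$''. This goes the wrong way. The supersolution inequality reads $G(\Omega^\ast)-G(\widetilde\Omega)\le\Lambda|\widetilde\Omega\setminus\Omega^\ast|$ for $\widetilde\Omega\supset\Omega^\ast$; if you add to $G$ a term $h$ that is \emph{decreasing} under inclusion (like an extra $\lambda_m$), then $h(\Omega^\ast)-h(\widetilde\Omega)\ge0$ and the left-hand side \emph{increases}. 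Nothing in the hypotheses controls $\lambda_m(\Omega^\ast)-\lambda_m(\widetilde\Omega)$ by $|\widetilde\Omega\setminus\Omega^\ast|$ for the intermediate indices $m\notin\{k_1,\dots,k_p\}$, and no enlargement of $\Lambda$ can absorb a term that is not $O(|\widetilde\Omega\setminus\Omega^\ast|)$. (Dropping terms, by contrast, is harmless --- which is why your argument for the first bullet, where you isolate a single $\lambda_{k_j}$, is fine.)

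The paper repairs this by inserting a clustering step \emph{before} passing to consecutive indices. One partitions $\{k_1,\dots,k_p\}$ into groups on which the values $\lambda_{k_j}(\Omega^\ast)$ coincide; dropping the other groups (legal, as just noted) reduces to the case $\lambda_{k_1}(\Omega^\ast)=\dots=\lambda_{k_p}(\Omega^\ast)$. In that case every intermediate eigenvalue on $\Omega^\ast$ equals the common value, so for any $m\in[k_1,k_p]$ and $\widetilde\Omega\supset\Omega^\ast$,
\[
\lambda_m(\Omega^\ast)-\lambda_m(\widetilde\Omega)=\lambda_{k_1}(\Omega^\ast)-\lambda_m(\widetilde\Omega)\le\lambda_{k_1}(\Omega^\ast)-\lambda_{k_1}(\widetilde\Omega),
\]
whence $\sum_{m=k_1}^{k_p}\bigl(\lambda_m(\Omega^\ast)-\lambda_m(\widetilde\Omega)\bigr)\le(k_p-k_1+1)\sum_j\bigl(\lambda_{k_j}(\Omega^\ast)-\lambda_{k_j}(\widetilde\Omega)\bigr)$. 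Now the supersolution property for the consecutive sum follows with an adjusted constant, and Lemma~\ref{double3treno2} applies. The equality within the cluster is precisely what furnishes the missing control on the intermediate $\lambda_m$ that your direct argument lacks.
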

\begin{proof}
Let $c_1,\dots,c_p\in\R^+$ be the strictly positive real numbers from \eqref{bi.lip} 
We note that if $\Omega^\ast$ is a supersolution of $F(\lb_{k_1}, \dots, \lb_{k_p})$, then $\Omega^\ast$ is also a supersolution for the functional 
$$\widetilde F=\left[\min_{j\in\{1,\dots,p\}}c_j\right]\left(\lambda_{k_1}+\dots+\lambda_{k_p}\right),$$
and, since $\min_{j\in\{1,\dots,p\}}c_j>0$, we can assume $\min_{j\in\{1,\dots,p\}}c_j=1$.  

Reasoning as in Lemma \ref{double3treno2}, we divide the family  $ \big(\lambda_{k_1}(\Omega^\ast),\dots,\lambda_{k_p}(\Omega^\ast) \big)$ into clusters of equal eigenvalues with consecutive indexes. There exist $1\le i_1< i_2\dots< i_s\le p-1$ such that
\begin{align*}
\lambda_{k_1}(\Omega^\ast)=\dots=\lambda_{k_{i_1}}(\Omega^\ast)&<\lambda_{k_{(i_1+1)}}(\Omega^\ast)=\dots=\lambda_{k_{i_2}}(\Omega^\ast)\\
&<\lambda_{k_{(i_2+1)}}(\Omega^\ast)=\dots=\lambda_{k_{i_3}}(\Omega^\ast)\\
&\dots\\
&<\lambda_{k_{(i_s+1)}}(\Omega^\ast)=\dots=\lambda_{k_p}(\Omega^\ast).
\end{align*}
 Since the eigenspaces, corresponding to different clusters, are orthogonal to each other, it is enough to prove the claim for the functionals defined as the sum of the eigenvalues in each cluster. In  other words, it is sufficient to restrict our attention only to the case when $\Omega^\ast$ is a supersolution for the functional $F(\lambda_{k_1},\dots,\lambda_{k_p})+\Lambda|\cdot|:=\sum_{j=1}^p\lambda_{k_j}+\Lambda|\cdot|$ and is such that 
\begin{equation}\label{thf.1e1}
\lambda_{k_1}(\Omega^\ast)=\dots=\lambda_{k_p}(\Omega^\ast).
\end{equation}
Moreover, in this case $\Omega^\ast$ is also a shape supersolution (with possibly different constant $\Lambda$) for the sum of consecutive eigenvalues $\sum_{k=k_1}^{k_p}\lambda_k+\Lambda|\cdot|$. Indeed, 
it is enough to consider the functional
$$\widetilde F(\Om)=\frac12\sum_{j=1}^p\lambda_{k_j}(\Om)+\theta\sum_{k=k_1}^{k_p}\lambda_{k}(\Om),$$
for a suitable value of $\theta$ (e.g. $\theta= \frac{1}{2(k_p-k_1+1)}$). The conclusion then follows by Lemma \ref{double3treno2}.
   
\end{proof}

\section{Optimal sets for functionals depending on the first $k$ eigenvalues}\label{openness}
In this last Section we aim to show that, at least for some specific functionals, we can conclude that a minimizer is actually equivalent to an open set.
All the following results are, essentially, consequences of Theorem~\ref{thf.1}.

\begin{teo}\label{thf.100}
Let $F:\R^k\to\R$ be an increasing function locally bi-Lipschitz in each variable. Then every solution $\Omega^\ast$ of the problem 
\begin{equation}\label{sopmilan}
\min\Big\{F\big(\lambda_1(\Omega),\dots,\lambda_k(\Omega)\big):\ \Omega\subset\R^d\ \hbox{measurable},\ |\Omega|=1\Big\},
\end{equation}
is an open set. Moreover, the eigenfunctions of the Dirichlet Laplacian on $\Omega^\ast$, corresponding to the eigenvalues $\lambda_1(\Omega^\ast),\dots,\lambda_k(\Omega^\ast)$, are Lipschitz continuous on $\R^d$.
\end{teo}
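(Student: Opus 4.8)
The plan is to deduce both the Lipschitz regularity of the eigenfunctions and the openness of $\Omega^\ast$ from Theorem \ref{thf.1}, the only nontrivial point being to check that a volume-constrained minimizer of \eqref{sopmilan} is a shape supersolution of the corresponding penalized functional. First I would show that $\Omega^\ast$ is a shape supersolution for $F(\lambda_1,\dots,\lambda_k)+\Lambda|\cdot|$ for a suitable $\Lambda>0$. This is the standard ``removing the volume constraint'' argument: one compares $\Omega^\ast$ with competitors $\widetilde\Omega\supset\Omega^\ast$ obtained by a dilation, and exploits the scaling law $\lambda_i(t\Omega)=t^{-2}\lambda_i(\Omega)$ together with the fact that $F$ is increasing and locally Lipschitz, so that shrinking $\widetilde\Omega$ back to unit volume raises the eigenvalues by a controlled amount proportional to $|\widetilde\Omega|-1$. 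Since $F$ is only \emph{locally} bi-Lipschitz, one must keep the competitors in a neighbourhood of $\Omega^\ast$ in the sense of the eigenvalue vector, which is automatic because $\widetilde\Omega\supset\Omega^\ast$ forces $\lambda_i(\widetilde\Omega)\le\lambda_i(\Omega^\ast)$, and the relevant window of eigenvalues stays bounded. One also needs $\Omega^\ast$ bounded in order to apply Theorem \ref{thf.1}; this follows from known results (e.g.\ \cite{bulbk,mbound}) on minimizers of spectral functionals with the $\min$ being over all measurable sets of unit volume.

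Once $\Omega^\ast$ is known to be a bounded shape supersolution of $F(\lambda_1,\dots,\lambda_k)+\Lambda|\cdot|$, Theorem \ref{thf.1} applies directly with $(k_1,\dots,k_p)=(1,2,\dots,k)$ and yields a family of $L^2$-orthonormal eigenfunctions $u_1,\dots,u_k$, corresponding to $\lambda_1(\Omega^\ast),\dots,\lambda_k(\Omega^\ast)$, each Lipschitz continuous on $\R^d$. This already proves the second assertion of the theorem.

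It remains to deduce that $\Omega^\ast$ is (equivalent to) an open set. The point is that here the functional depends on the \emph{first} $k$ eigenvalues, hence on a complete consecutive block starting from $\lambda_1$, and the clusters of equal eigenvalues in Theorem \ref{thf.1} fill out the whole block $1,\dots,k$: for each cluster there are at least as many orthonormal Lipschitz eigenfunctions as its cardinality. Therefore the set
$$\Omega^{\ast\ast}:=\bigcup_{j=1}^k\{|u_j|>0\}$$
is open (a finite union of open sets, each being the positivity set of a continuous function), it satisfies $\Omega^{\ast\ast}\subseteq\Omega^\ast$ up to a set of zero capacity, and the $u_j$ belong to $H^1_0(\Omega^{\ast\ast})$, so they are also the first $k$ eigenfunctions there with $\lambda_i(\Omega^{\ast\ast})=\lambda_i(\Omega^\ast)$ for $i=1,\dots,k$. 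Since $|\Omega^{\ast\ast}|\le|\Omega^\ast|=1$, the minimality of $\Omega^\ast$ (and the fact that $F$ is increasing in each variable, so adding mass can only help) forces $F$ evaluated at the eigenvalues to be unchanged and the measures to agree, hence $|\Omega^\ast\setminus\Omega^{\ast\ast}|=0$, i.e.\ $\Omega^\ast$ is equivalent to the open set $\Omega^{\ast\ast}$.

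I expect the main obstacle to be the first step, namely the rigorous passage from the volume-constrained problem to the shape-supersolution property under the mere \emph{local} bi-Lipschitz hypothesis on $F$: one has to argue that the perturbations involved in the supersolution inequality never leave the region where $F$ is quantitatively monotone, and to produce a single penalization constant $\Lambda$ that works uniformly, which requires a little care with the scaling argument and with the boundedness of the eigenvalues in play.
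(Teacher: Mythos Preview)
Your proposal is correct and follows essentially the same route as the paper: the scaling argument to pass from the volume constraint to the supersolution property of $F(\lambda_1,\dots,\lambda_k)+\Lambda|\cdot|$, then Theorem~\ref{thf.1} with $(k_1,\dots,k_p)=(1,\dots,k)$, and finally the open set $\Omega^{\ast\ast}=\bigcup_{j=1}^k\{u_j\neq0\}$ together with minimality to conclude $|\Omega^\ast\setminus\Omega^{\ast\ast}|=0$. Your treatment is in fact slightly more careful than the paper's on two points---you explicitly invoke boundedness of $\Omega^\ast$ (needed in the hypotheses of Theorem~\ref{thf.1}) and you flag the locality of the bi-Lipschitz assumption---but these are refinements of the same argument, not a different approach.
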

\begin{proof}
We first note that the existence of a solution of \eqref{sopmilan} follows by the results from~\cite{bulbk} and~\cite{mp}. Then, we prove that every solution $\Omega^\ast$ is a local shape supersolution of the functional 
$$\Omega\mapsto F\big(\lambda_1(\Omega),\dots,\lambda_k(\Omega)\big)+\Lambda|\Omega|,$$
for some suitably chosen $\Lambda>0$. Indeed, let $\Omega^\ast\subset\Omega$ and let $\ds t:=\left(\frac{|\Omega|}{|\Omega^\ast|}\right)^{1/d}>1$. By the optimality of $\Omega^\ast$, we have
\begin{align*}
F\big(\lambda_1(\Omega^\ast),\dots,\lambda_k(\Omega^\ast)\big)&\le F\big(\lambda_1(\Omega/t),\dots,\lambda_k(\Omega/t)\big)\\
&\le F\big(\lambda_1(\Omega),\dots,\lambda_k(\Omega)\big)\\
&\qquad+\Big(F\big(t^2\lambda_1(\Omega),\dots,t^2\lambda_k(\Omega)\big)-F\big(\lambda_1(\Omega),\dots,\lambda_k(\Omega)\big)\Big)\\
&\le F\big(\lambda_1(\Omega),\dots,\lambda_k(\Omega)\big)+\hbox{Lip}(F)(t^2-1)\sum_{i=1}^k\lambda_i(\Omega)\\
&\le F\big(\lambda_1(\Omega),\dots,\lambda_k(\Omega)\big)+\hbox{Lip}(F)(t^d-1)\sum_{i=1}^k\lambda_i(\Omega^\ast)\\
&\le F\big(\lambda_1(\Omega),\dots,\lambda_k(\Omega)\big)+\hbox{Lip}(F)\left(\sum_{i=1}^k\lambda_i(\Omega^\ast)\right)|\Omega^\ast|^{-1}\Big(|\Omega|-|\Omega^\ast|\Big),
\end{align*}
where $\hbox{Lip}(F)$ is the Lipschitz constant of $F$ and we finally set $\ds\Lambda:=\frac{\hbox{Lip}(F)}{|\Omega^\ast|}\left(\sum_{i=1}^k\lambda_i(\Omega^\ast)\right).$
Now the Lipschitz continuity of the eigenfunctions $u_1,\dots,u_k$ on $\Omega^\ast$ follows by Theorem \ref{thf.1}. The openness of the set $\Omega^\ast$ follows by the observation that the set 
$$\Omega^{\ast\ast}:=\bigcup_{i=1}^k\{u_k\neq0\}\subset\Omega^\ast,$$
is open and has the same eigenvalues, up to order $k$, as $\Omega^\ast$. By the optimality of $\Omega^\ast$ we have $|\Omega^\ast\Delta \Omega^{\ast\ast}|=0$.
\end{proof}

\begin{oss}
The openness of the optimal set from Theorem \ref{thf.1} can also be obtained reasoning on each connected component of $\Omega^\ast$ and applying the Alt-Caffarelli technique from \cite{altcaf} for the functional $\lambda_1(\Omega)+\Lambda|\Omega|$ as in \cite{brla} and \cite{bubuve}.  
\end{oss}

\begin{oss}
In two dimensions, it is possible to obtain the continuity of the eigenfunctions from Theorem \ref{thf.100} by a more direct method involving only elementary tools (see \cite{phdm}). Roughly speaking, using the argument from Remark \ref{rmkcont}, one can prove that in each level set of some of the eigenfunctions, there cannot be holes of small diameter, since otherwise it is more convenient to ``fill" them. More precisely, for every $\xi>0$ and every $x\in\R^2$ such that $u_1^2(x)+\dots+u_k^2(x)>\xi$ there is a constant $r=r(\xi)>0$ and a ball, of radius $r(\xi)$ and centred in $x$, which is entirely contained in $\Omega^\ast$. In particular, this fact provides an estimate on the modulus of continuity of the function $U:=u_1^2+\dots+u_k^2$ on the boundary of $\Omega^\ast$.
\end{oss}

By the definition of the open set $\Omega^{\ast\ast}$, we have that the first $k$ elements of the spectrum of the Dirichlet Laplacian, defined on the space $\widetilde H^1_0(\Omega^{\ast\ast})$, and those, defined on the classical Sobolev space $H^1_0(\Omega^{\ast\ast})$, coincide. Thus, we have a solution of the shape optimization problem \eqref{sopmilan} in its classical formulation.   

\begin{cor}\label{corf.100}
Let $F:\R^k\to\R$ be an increasing function locally bi-Lipschitz in each variable. Then there is a solution $\Omega^\ast$ of the problem 
\begin{equation}\label{sopmil}
\min\Big\{F\big(\lambda_1(\Omega),\dots,\lambda_k(\Omega)\big):\ \Omega\subset\R^d\ \hbox{open},\ |\Omega|=1\Big\}.
\end{equation}
Moreover, the eigenfunctions of the Dirichlet Laplacian on $\Omega^\ast$, corresponding to the eigenvalues $\lambda_1(\Omega^\ast),\dots,\lambda_k(\Omega^\ast)$, are Lipschitz continuous on $\R^d$.
\end{cor}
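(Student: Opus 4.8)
The plan is to deduce this result directly from Theorem \ref{thf.100}, which already produces a \emph{measurable} minimizer $\Omega^\ast$ of \eqref{sopmilan} together with the open set $\Omega^{\ast\ast}=\bigcup_{i=1}^k\{u_i\neq 0\}\subset\Omega^\ast$. The key point established in the proof of Theorem \ref{thf.100} is that $\Omega^{\ast\ast}$ is open, satisfies $|\Omega^\ast\setminus\Omega^{\ast\ast}|=0$, and — crucially — has the same first $k$ eigenvalues as $\Omega^\ast$; moreover the eigenfunctions $u_1,\dots,u_k$ belong to $\widetilde H^1_0(\Omega^\ast)$ and are Lipschitz continuous on $\R^d$.

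First I would observe that since $\Omega^{\ast\ast}$ is open and $|\Omega^{\ast\ast}|=|\Omega^\ast|=1$, it is an admissible competitor for the classical problem \eqref{sopmil}. Next I would check that for an \emph{open} set the two notions of spectrum agree up to the relevant order: each Lipschitz eigenfunction $u_i$ vanishes (in the $\widetilde H^1_0$, hence a.e.) outside $\Omega^{\ast\ast}$, and being continuous it actually vanishes outside the open set $\Omega^{\ast\ast}$ pointwise, so $u_i\in H^1_0(\Omega^{\ast\ast})$ in the classical sense (closure of $C^\infty_c$); this gives $\lambda_i(H^1_0(\Omega^{\ast\ast}))\le\lambda_i(\widetilde H^1_0(\Omega^{\ast\ast}))$, and combined with the general inclusion $H^1_0\subset\widetilde H^1_0$ we get equality $\lambda_i(H^1_0(\Omega^{\ast\ast}))=\lambda_i(\widetilde H^1_0(\Omega^{\ast\ast}))=\lambda_i(\Omega^\ast)$ for $i=1,\dots,k$ (this is exactly the remark made in the paragraph preceding the corollary). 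Since $F$ is increasing in each variable, $F\big(\lambda_1(\Omega^{\ast\ast}),\dots,\lambda_k(\Omega^{\ast\ast})\big)=F\big(\lambda_1(\Omega^\ast),\dots,\lambda_k(\Omega^\ast)\big)$, which is the minimal value of \eqref{sopmilan}.

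Finally I would argue that this minimal value is also the infimum for the classical problem \eqref{sopmil}: any open competitor $\Omega$ with $|\Omega|=1$ is in particular measurable with the classical and capacitary $H^1_0$ spaces coinciding on open sets (as recalled in Section \ref{prel}), so $F(\lambda_1(\Omega),\dots,\lambda_k(\Omega))$ computed classically equals the value entering \eqref{sopmilan}; hence \eqref{sopmil} has infimum no smaller than that of \eqref{sopmilan}, and $\Omega^{\ast\ast}$ attains it. Thus $\Omega^\ast:=\Omega^{\ast\ast}$ solves \eqref{sopmil}, and its first $k$ eigenfunctions are the same Lipschitz functions $u_1,\dots,u_k$ from Theorem \ref{thf.100}. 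The only mildly delicate point — and the one I would write out carefully — is the identity $\lambda_i(H^1_0(\Omega^{\ast\ast}))=\lambda_i(\widetilde H^1_0(\Omega^{\ast\ast}))$ for $i\le k$, i.e. that the Lipschitz eigenfunctions produced abstractly can genuinely be approximated in $H^1$ by $C^\infty_c(\Omega^{\ast\ast})$ functions; everything else is essentially bookkeeping on top of Theorem \ref{thf.100}.
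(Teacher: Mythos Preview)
Your proposal is correct and follows exactly the route the paper takes: the paper does not give a separate proof of the corollary but deduces it from Theorem~\ref{thf.100} via the paragraph immediately preceding the statement, observing that the open set $\Omega^{\ast\ast}=\bigcup_{i=1}^k\{u_i\neq0\}$ has the same first $k$ eigenvalues whether computed in $\widetilde H^1_0(\Omega^{\ast\ast})$ or in the classical $H^1_0(\Omega^{\ast\ast})$. One small imprecision: in your last paragraph you assert that for an arbitrary open competitor the classical and $\widetilde H^1_0$ spectra coincide, but the paper explicitly notes this can fail (e.g.\ a ball minus a diameter); what you actually need there is only the inequality $\lambda_i(H^1_0(\Omega))\ge\lambda_i(\widetilde H^1_0(\Omega))$, which follows from $H^1_0(\Omega)\subset\widetilde H^1_0(\Omega)$ and suffices for the comparison of infima.
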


\begin{oss}
Theorem \ref{thf.100} and Corollary \ref{corf.100} apply, in particular, to the functional 
$$F\big(\lambda_1(\Omega),\dots,\lambda_k(\Omega)\big):=\sum_{i=1}^k\lambda_i(\Omega).$$
\end{oss}

In Theorem \ref{thf.100} we proved that every solution of \eqref{sopmilan} contains another solution, which is an open set. The analogous result holds also for supersolutions. 

\begin{prop}
Let $F:\R^k\to\R$ be an increasing locally bi-Lipschitz function in each variable and $\Omega^\ast$ be a subsolution for the functional
\begin{equation}\label{funsum}
\Omega\mapsto F\big(\lambda_1(\Omega),\dots,\lambda_k(\Omega)\big)+\Lambda|\Omega|.
\end{equation}
Then
\begin{enumerate}[(i)]
\item There are eigenfunctions $u_1,\dots,u_k\in \widetilde H^1_0(\Omega^\ast)$, corresponding to the eigenvalues\\ $\lambda_1(\Omega^\ast),\dots,\lambda_k(\Omega^\ast)$, which are Lipschitz continuous on $\R^d$. 
\item There is an open set $\Omega^{\ast\ast}\subset\Omega^\ast$ such that $u_1,\dots,u_k\in H^1_0(\Omega^{\ast\ast})$; $\lambda_i(\Omega^{\ast\ast})=\lambda_i(\Omega^\ast)$, for every $i=1,\dots,k$; $\Omega^{\ast\ast}$ is still a supersolution for the functional \eqref{funsum}. 
\end{enumerate}
\end{prop}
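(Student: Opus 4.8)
The plan is to read off part~(i) directly from Theorem~\ref{thf.1} and to obtain part~(ii) by the same surgery used in the proof of Theorem~\ref{thf.100}, adapted so that the supersolution property survives the cut. For (i), note that by hypothesis $\Omega^\ast$ is a bounded shape supersolution of $\Omega\mapsto F\big(\lambda_1(\Omega),\dots,\lambda_k(\Omega)\big)+\Lambda|\Omega|$, which is exactly the assumption of Theorem~\ref{thf.1} with the choice of indices $k_j=j$, $j=1,\dots,k$. Hence there exist $L^2$-orthonormal eigenfunctions $u_1,\dots,u_k\in\widetilde H^1_0(\Omega^\ast)$, associated to $\lambda_1(\Omega^\ast),\dots,\lambda_k(\Omega^\ast)$, that are Lipschitz continuous on $\R^d$.

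For (ii), set $\Omega^{\ast\ast}:=\bigcup_{i=1}^k\{u_i\neq 0\}$. Each $u_i$ is continuous, so every $\{u_i\neq0\}$ is open and $\Omega^{\ast\ast}$ is open; since $\{u_i\neq0\}\subset\Omega^{\ast\ast}$ we get $\cp\big(\{u_i\neq0\}\setminus\Omega^{\ast\ast}\big)=0$, i.e. $u_i\in H^1_0(\Omega^{\ast\ast})$, and choosing for $\Omega^\ast$ a representative containing $\Omega^{\ast\ast}$ (which does not change $\widetilde H^1_0(\Omega^\ast)$) we may assume $\Omega^{\ast\ast}\subset\Omega^\ast$. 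Because the weak formulation of $-\Delta u_i=\lambda_i(\Omega^\ast)u_i$ over $\widetilde H^1_0(\Omega^\ast)$ restricts to test functions in $H^1_0(\Omega^{\ast\ast})$, each $u_i$ is also a Dirichlet eigenfunction on $\Omega^{\ast\ast}$ with the same eigenvalue; applying the min-max principle to the span of $u_1,\dots,u_i$ inside $H^1_0(\Omega^{\ast\ast})$ gives $\lambda_i(\Omega^{\ast\ast})\le\lambda_i(\Omega^\ast)$, while the inclusion $\Omega^{\ast\ast}\subset\Omega^\ast$ gives the reverse inequality. Hence $\lambda_i(\Omega^{\ast\ast})=\lambda_i(\Omega^\ast)$ for $i=1,\dots,k$, and $u_1,\dots,u_k\in H^1_0(\Omega^{\ast\ast})$ as required.

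It remains to check that $\Omega^{\ast\ast}$ inherits the supersolution property. Fix any $\widetilde\Omega\supset\Omega^{\ast\ast}$ and apply the supersolution inequality for $\Omega^\ast$ to the competitor $\widetilde\Omega\cup\Omega^\ast\supset\Omega^\ast$. Using that $F$ is increasing together with the eigenvalue monotonicity $\lambda_i(\widetilde\Omega\cup\Omega^\ast)\le\lambda_i(\widetilde\Omega)$, and the measure bound $|\widetilde\Omega\cup\Omega^\ast|=|\widetilde\Omega|+|\Omega^\ast\setminus\widetilde\Omega|\le|\widetilde\Omega|+|\Omega^\ast|-|\Omega^{\ast\ast}|$ (valid because $\Omega^{\ast\ast}\subset\widetilde\Omega$), one obtains
\begin{align*}
F\big(\lambda_1(\Omega^{\ast\ast}),\dots,\lambda_k(\Omega^{\ast\ast})\big)+\Lambda|\Omega^\ast|
&=F\big(\lambda_1(\Omega^\ast),\dots,\lambda_k(\Omega^\ast)\big)+\Lambda|\Omega^\ast|\\
&\le F\big(\lambda_1(\widetilde\Omega\cup\Omega^\ast),\dots,\lambda_k(\widetilde\Omega\cup\Omega^\ast)\big)+\Lambda|\widetilde\Omega\cup\Omega^\ast|\\
&\le F\big(\lambda_1(\widetilde\Omega),\dots,\lambda_k(\widetilde\Omega)\big)+\Lambda|\widetilde\Omega|+\Lambda\big(|\Omega^\ast|-|\Omega^{\ast\ast}|\big),
\end{align*}
and cancelling $\Lambda\big(|\Omega^\ast|-|\Omega^{\ast\ast}|\big)$ from both sides gives exactly $F\big(\lambda_1(\Omega^{\ast\ast}),\dots,\lambda_k(\Omega^{\ast\ast})\big)+\Lambda|\Omega^{\ast\ast}|\le F\big(\lambda_1(\widetilde\Omega),\dots,\lambda_k(\widetilde\Omega)\big)+\Lambda|\widetilde\Omega|$, i.e. $\Omega^{\ast\ast}$ is a supersolution for \eqref{funsum}.

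The only genuine obstacle is this last step: the competitors $\widetilde\Omega$ for $\Omega^{\ast\ast}$ need not contain $\Omega^\ast$, so the supersolution property of $\Omega^\ast$ cannot be used directly and must be funneled through $\widetilde\Omega\cup\Omega^\ast$, simultaneously controlling the spectral term (via monotonicity of the eigenvalues and of $F$) and the Lebesgue-measure term (which is precisely where $\Omega^{\ast\ast}\subset\widetilde\Omega$ is used). Everything else — the continuity of the $u_i$ and the capacitary description of $H^1_0$ in the construction of $\Omega^{\ast\ast}$, and the application of Theorem~\ref{thf.1} — is routine.
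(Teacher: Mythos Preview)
Your proof is correct and follows essentially the same route as the paper: part~(i) is read off from Theorem~\ref{thf.1}, and for part~(ii) you set $\Omega^{\ast\ast}=\bigcup_{i=1}^k\{u_i\neq0\}$, verify $\lambda_i(\Omega^{\ast\ast})=\lambda_i(\Omega^\ast)$, and then pass from the supersolution inequality for $\Omega^\ast$ (tested against $\widetilde\Omega\cup\Omega^\ast$) to the one for $\Omega^{\ast\ast}$ by absorbing the measure defect $\Lambda\big(|\Omega^\ast|-|\Omega^{\ast\ast}|\big)$ on both sides. Your write-up is in fact slightly more detailed than the paper's, spelling out the min-max argument for the eigenvalue equality and the measure estimate $|\widetilde\Omega\cup\Omega^\ast|\le|\widetilde\Omega|+|\Omega^\ast|-|\Omega^{\ast\ast}|$.
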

\begin{proof}
%
%
%
The first claim follows from Theorem \ref{thf.1}. For {\it (ii)} we define $\Omega^{\ast\ast}$ as in Theorem \ref{thf.100}:
\[
\Om^{**}:=\bigcup_{i=1}^k{\left\{u_i\neq0\right\}},
\]
thus we have $\lambda_i(\Om^*)=\lambda_i(\Om^{**})$ for every $i=1,\dots, k$.
For all $\Om\supset\Om^{**}$ we compute
\[
\begin{array}{ll}
\ds F\big(\lambda_1(\Omega^{\ast\ast}),\dots,\lambda_k(\Omega^{\ast\ast})\big)+&\ds\Lambda|\Omega^{**}|+\Lambda|\Omega^*\setminus\Omega^{**}|= F\big(\lambda_1(\Omega^\ast),\dots,\lambda_k(\Omega^\ast)\big)+\Lambda|\Omega^*|\\
\\
&\ds\le F\big(\lambda_1(\Omega\cup\Omega^\ast),\dots,\lambda_k(\Omega\cup\Omega^\ast)\big)+\Lambda|\Omega\cup\Omega^*|\\
\\
&\ds\le F\big(\lambda_1(\Omega),\dots,\lambda_k(\Omega)\big)+\Lambda|\Omega|+\Lambda|\Omega^*\setminus\Omega^{\ast\ast}|,
\end{array}
\]
hence $\Om^{**}$ is also a supersolution for \eqref{funsum}. 
\end{proof}


For functionals of the form 
$$\Omega\mapsto F\big(\lambda_{k_1}(\Omega),\dots,\lambda_{k_p}(\Omega)\big),$$
depending on some non-consecutive eigenvalues $\lambda_{k_1},\dots,\lambda_{k_p}$, it is still possible to obtain that an optimal set $\Omega^\ast$  for the problem 
\begin{equation}\label{bilipincr}
\min\Big\{F\big(\lambda_{k_1}(\Omega),\dots,\lambda_{k_p}(\Omega)\big)\;:\;\Omega\subset\R^d\;\mbox{measurable, }|\Omega|=1\Big\},
\end{equation} 
is open, provided that an additional condition on the eigenvalues of $\Omega^\ast$ is satisfied.
\begin{prop}\label{openmeasure}
Let $F:\R^p\ra \R$ be an increasing and locally bi-Lipschitz function in each variable, $0< k_1< k_2<\dots < k_p$ be natural numbers and $\Om^*$ a minimizer for the problem \eqref{bilipincr}.
If for all $j=1,\dots,p$ we have $\la_{k_j}(\Om^*)>\la_{k_j-1}(\Om^*)$ then the set $\Omega^\ast$ is open. Moreover all the eigenfunctions corresponding to $\lambda_{k_j}(\Omega^\ast)$, for all $j=1,\dots, p$ are Lipschitz continuous on $\R^d$.
\end{prop}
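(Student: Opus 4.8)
The plan is to follow the scheme of the proof of Theorem~\ref{thf.100}, reducing openness to an application of Theorem~\ref{thf.1} combined with the volume constraint. First I would check that $\Om^\ast$ is a local shape supersolution for the functional $\Om\mapsto F\big(\lb_{k_1}(\Om),\dots,\lb_{k_p}(\Om)\big)+\Lambda|\Om|$ for a suitable $\Lambda>0$. This is exactly the rescaling computation already carried out in Theorem~\ref{thf.100}: given a measurable $\Om\supset\Om^\ast$, set $t:=(|\Om|/|\Om^\ast|)^{1/d}\ge 1$ and compare $\Om^\ast$ with the unit-volume competitor $\Om/t$; using that $F$ is increasing and Lipschitz, that $t^2-1\le t^d-1$ (since $d\ge 2$), and that $\lb_{k_j}(\Om)\le\lb_{k_j}(\Om^\ast)$ by monotonicity, one gets the supersolution inequality with $\Lambda=\hbox{Lip}(F)\,|\Om^\ast|^{-1}\sum_{j}\lb_{k_j}(\Om^\ast)$.

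Next I would apply Theorem~\ref{thf.1} to $\Om^\ast$. The hypothesis $\lb_{k_j}(\Om^\ast)>\lb_{k_j-1}(\Om^\ast)$ for every $j$ forces in particular $\lb_{k_1}(\Om^\ast)<\dots<\lb_{k_p}(\Om^\ast)$, and the second bullet of Theorem~\ref{thf.1} then says that the \emph{full} eigenspace of the Dirichlet Laplacian on $\widetilde H^1_0(\Om^\ast)$ associated to each $\lb_{k_j}(\Om^\ast)$ consists only of Lipschitz continuous functions on $\R^d$. This is precisely the last assertion of the statement.

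For the openness of $\Om^\ast$ I would argue as in Theorem~\ref{thf.100}. The aim is to produce an \emph{open} set $\Om^{\ast\ast}\subset\Om^\ast$ carrying the eigenvalues $\lb_{k_1}(\Om^\ast),\dots,\lb_{k_p}(\Om^\ast)$ unchanged. To this end it suffices to exhibit an $L^2$-orthonormal system $u_1,\dots,u_{k_p}\in\widetilde H^1_0(\Om^\ast)$ realizing $\lb_1(\Om^\ast),\dots,\lb_{k_p}(\Om^\ast)$ all of which are Lipschitz continuous: then $\Om^{\ast\ast}:=\bigcup_{i=1}^{k_p}\{u_i\neq0\}$ is open (a finite union of supports of continuous functions) and contained in $\Om^\ast$, and since $\hbox{span}\langle u_1,\dots,u_{k_j}\rangle$ is a $k_j$-dimensional subspace of $\widetilde H^1_0(\Om^{\ast\ast})$ on which the Rayleigh quotient is at most $\lb_{k_j}(\Om^\ast)$, together with the monotonicity inequality $\lb_{k_j}(\Om^{\ast\ast})\ge\lb_{k_j}(\Om^\ast)$ this gives $\lb_{k_j}(\Om^{\ast\ast})=\lb_{k_j}(\Om^\ast)$ for every $j$. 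If $|\Om^{\ast\ast}|<|\Om^\ast|=1$, rescaling $\Om^{\ast\ast}$ to unit volume strictly lowers every $\lb_{k_j}$ and hence, $F$ being strictly increasing in each variable, strictly lowers $F\big(\lb_{k_1},\dots,\lb_{k_p}\big)$, contradicting the minimality of $\Om^\ast$; thus $|\Om^\ast\setminus\Om^{\ast\ast}|=0$ and $\Om^\ast$ coincides with the open set $\Om^{\ast\ast}$, and the eigenfunctions $u_{k_j}$ are Lipschitz by the previous step.

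The main obstacle is the construction, in the last step, of the \emph{full} Lipschitz orthonormal system $u_1,\dots,u_{k_p}$: one must also obtain the Lipschitz continuity of the eigenfunctions corresponding to the eigenvalues $\le\lb_{k_p}(\Om^\ast)$ that are not among the $\lb_{k_j}(\Om^\ast)$, and this is where the gap assumptions on \emph{all} the indices $k_j$ are really used. The idea is to upgrade the shape-supersolution property: by bi-Lipschitzianity $\Om^\ast$ is a shape supersolution for $\sum_{j}\lb_{k_j}+\Lambda|\cdot|$, and I would show it is in fact a shape supersolution for the full consecutive sum $\sum_{i=1}^{k_p}\lb_i+\Lambda'|\cdot|$. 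For this, given an outer perturbation $\widetilde\Om\supset\Om^\ast$, one controls each decrease $\lb_i(\Om^\ast)-\lb_i(\widetilde\Om)$ by a fixed multiple of the decrease $\lb_{k_j}(\Om^\ast)-\lb_{k_j}(\widetilde\Om)$ of the smallest gap index $k_j\ge i$: for small perturbations this is possible because any drop in $\lb_i$ shifts the spectrum and the strict gap $\lb_{k_j}(\Om^\ast)-\lb_{k_j-1}(\Om^\ast)>0$ bounds the comparison constant by $\lb_{k_p}(\Om^\ast)\big/\min_j\big(\lb_{k_j}(\Om^\ast)-\lb_{k_j-1}(\Om^\ast)\big)$, while for large perturbations $|\widetilde\Om\setminus\Om^\ast|$ is bounded below and the left-hand side is bounded by $\sum_{i\le k_p}\lb_i(\Om^\ast)$. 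Once $\Om^\ast$ is known to be a shape supersolution for $\sum_{i=1}^{k_p}\lb_i+\Lambda'|\cdot|$, Lemma~\ref{double3treno2} (applied with $k=1$) furnishes exactly the required Lipschitz orthonormal eigenfunctions $u_1,\dots,u_{k_p}$, and the previous step concludes the proof.
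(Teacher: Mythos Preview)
Your first two steps are fine and coincide with the paper: the rescaling argument gives the supersolution property, and Theorem~\ref{thf.1} (with the gap assumption $\lb_{k_j}(\Om^\ast)>\lb_{k_j-1}(\Om^\ast)$) then yields the Lipschitz continuity of the \emph{full} eigenspaces at the eigenvalues $\lb_{k_j}(\Om^\ast)$.

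The problem is in your openness argument, specifically in the ``upgrade'' step. You claim that $\Om^\ast$ is a shape supersolution for the full consecutive sum $\sum_{i=1}^{k_p}\lb_i+\Lambda'|\cdot|$, and for this you want, for every outer perturbation $\widetilde\Om\supset\Om^\ast$ and every $i\le k_p$, an estimate of the form
\[
\lb_i(\Om^\ast)-\lb_i(\widetilde\Om)\le C\big(\lb_{k_j}(\Om^\ast)-\lb_{k_j}(\widetilde\Om)\big)
\]
for the smallest $k_j\ge i$. This is not justified by the sentence ``any drop in $\lb_i$ shifts the spectrum and the strict gap \dots\ bounds the comparison constant'', and in general it is false at the level of supersolutions. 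For instance, if $\Om^\ast=A\cup B$ is disconnected with $\lb_1(\Om^\ast)=\lb_1(A)<\lb_1(B)=\lb_{k_1}(\Om^\ast)$ and $\lb_2(A),\lb_2(B)>\lb_1(B)$, then enlarging $A$ slightly decreases $\lb_1$ while leaving $\lb_{k_1}=\lb_1(B)$ unchanged; the supersolution inequality for $\lb_{k_1}+\Lambda|\cdot|$ still holds trivially, so nothing rules this out. Without this upgrade you cannot invoke Lemma~\ref{double3treno2} to get Lipschitz eigenfunctions at the intermediate indices, and your construction of $\Om^{\ast\ast}$ collapses.

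The paper avoids this obstacle by not asking for Lipschitz continuity of \emph{all} $u_1,\dots,u_{k_p}$. It only takes the eigenfunctions $u_i$ with $i\in I:=\{i:\lb_i(\Om^\ast)=\lb_{k_j}(\Om^\ast)\ \hbox{for some }j\}$, which are Lipschitz by the first part, sets $\Om_A:=\{\sum_{i\in I}u_i^2>0\}$, and shows directly that $|\Om^\ast\setminus\Om_A|=0$. The point is that if $N:=\Om^\ast\setminus\Om_A$ had positive measure, one could remove from $\Om^\ast$ a small piece $N\cap B_\rho(x)$ around a density point $x$ of $N$; by $\gamma$-continuity the first $k_j-1$ eigenvalues of the new set stay strictly below $\lb_{k_j}(\Om^\ast)$, while the $u_i$ with $i\in I$ (vanishing on $N$) remain eigenfunctions, so $\lb_{k_j}$ does not increase. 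Rescaling then contradicts minimality. This bypasses entirely the need to control the intermediate eigenvalues by the ones in the functional.
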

\begin{proof}
The second part of the claim follows by Theorem \ref{thf.1}. In order to prove the openness of $\Omega^\ast$ we consider the family of indices 
$$I:=\Big\{i\in\N\;:\;\lambda_i(\Omega^*)=\lambda_{k_j}(\Omega^*),\;\mbox{for some }j\Big\},$$
and the set 
\[
\Om_A:=\left\{x\in\R^d\;:\;\sum_{i\in I}{u_i(x)^2}>0\right\}.
\]
We aim to prove that the set $N:=\Om^*\setminus \Om_A$ has zero Lebesgue measure. Suppose, by contradiction, that $|N|>0$ and let $x\in N$ be a point of density one for $N$, i.e. 
\begin{equation}
\lim_{\rho\rightarrow 0}{\frac{|N\cap B_\rho(x)|}{|B_{\rho}(x)|}}=1.
\end{equation}
Since, for $\rho\to0$, the sets $\Omega^*\setminus (N\cap B_\rho(x))$ $\gamma$-converge to $\Omega^*$ we have the convergence of the spectra $\lambda_k(\Omega^*\setminus (N\cap B_\rho(x)))\to\lambda_k(\Omega^\ast)$, for every $k\in\N$.

Since $\la_{k_j}(\Omega^*)>\la_{k_j-1}(\Omega^*)$, we can choose $\rho$ small enough such that the new set $\widetilde \Omega:=\Omega^*\setminus (N\cap B_{\rho}(x))$ satisfies 
\begin{equation}\label{simpleineq}
\lambda_i(\widetilde \Omega)<\lambda_{k_j}(\Omega^\ast),\qquad \forall\;i=1,\dots,k_j-1.
\end{equation}
We now note that for $i\in I$ the eigenfunction $u_i\in \widetilde H^1_0(\widetilde\Omega)$ and since $\widetilde\Omega\subset\Omega^\ast$, we get that $u_i$ satisfies the equation
\[
-\Delta u_i=\lambda_{k_j}(\Omega^*)u_i,\qquad u_i\in \widetilde H^1_0(\widetilde \Omega).
\]
Thus, for $i\in I$, the number $\lambda_i(\Omega^\ast)$ is also in the spectrum of the Dirichlet Laplacian on $\widetilde\Omega$. Combined with \eqref{simpleineq} this gives 
\begin{equation}\label{lipeig}
\lambda_k(\widetilde \Omega)\le \lambda_{k}(\Omega^\ast),\qquad \forall\  k=1,\dots,k_p.
\end{equation}
Since for $\rho>0$ small enough $|N\cap B_\rho(x)|>0$, we have that $|\widetilde\Omega|<|\Omega^\ast|$. By the strict monotonicity of $F$, we can rescale $\widetilde\Omega$ thus obtaining a better competitor than $\Omega^\ast$ in \eqref{bilipincr}, which is a contradiction with the optimality of $\Omega^\ast$. 
%
%
\end{proof}

\begin{oss}
Unfortunately, Proposition~\ref{openmeasure} provides the openness of optimal sets only up to zero Lebesgue measure. Hence we have that $\widetilde H^1_0(\Om^*)=\widetilde H^1_0(\Om_A)$, but we do not know in general if $H^1_0(\Om^*)=H^1_0(\Om_A)$.
\end{oss}

\appendix
\section{Appendix: Proof of Theorem \ref{main}}
For the sake of the completeness, we report here the proof of Theorem \ref{main}, given in \cite{bhp05}. We note that if the state function $u$, quasi-minimizer for the functional $J_f$, is positive, then the classical approach of Alt and Caffarelli (see \cite{altcaf}) can be applied to obtain the Lipschitz continuity of $u$. This approach is based on an external perturbation and on the following inequality (see \cite[Lemma 3.2]{altcaf})
\begin{equation}\label{altcafineq}
\frac{\big|B_r(x_0)\cap\{u=0\}\big|}{r^2}\Big(\mean{\partial B_r(x_0)}{u\,d\H^{d-1}}\Big)^2\le C_d\int_{B_r(x_0)}|\nabla(u-v)|^2\,dx,
\end{equation}
which holds for every $x_0\in\R^d$, $r>0$, $u\in H^1(\R^d)$, $u\ge 0$ and $v\in H^1(B_r)$ that solves
\begin{equation}\label{altcafv}
\min\Big\{\int_{B_r(x_0)}|\nabla v|^2\,dx:\ v-u\in H^1_0(B_r(x_0)),\ v\ge u \Big\}.
\end{equation}
Since for sign-changing state functions $u$, the inequality \eqref{altcafineq} is not known, one needs a more careful analysis on the common boundary of $\{u>0\}$ and $\{u<0\}$, which is based on the monotonicity formula of Alt-Caffarelli-Friedmann.
\begin{teo}\label{mth}
Let $U^+,U^-\in H^1(B_1)$ be continuous non-negative functions such that $\Delta U^\pm\ge-1$ on $B_1$ and $U^+U^-=0$. Then there is a dimensional constant $C_d$ such that for each $r\in(0,\frac12)$
\begin{equation}\label{mthe1}
\left(\frac{1}{r^2}\int_{B_r}\frac{|\nabla U^+(x)|^2}{|x|^{d-2}}\,dx\right)\left(\frac{1}{r^2}\int_{B_r}\frac{|\nabla U^-(x)|^2}{|x|^{d-2}}\,dx\right)\le C_d\left(1+\int_{B_1}|U^++U^-|^2\,dx\right).
\end{equation}   
\end{teo}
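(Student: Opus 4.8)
The plan is to establish the Alt--Caffarelli--Friedman monotonicity formula by showing that the functional
\[
\Phi(r):=\frac{1}{r^{4}}\left(\int_{B_{r}}\frac{|\nabla U^{+}|^{2}}{|x|^{d-2}}\,dx\right)\left(\int_{B_{r}}\frac{|\nabla U^{-}|^{2}}{|x|^{d-2}}\,dx\right)=:\frac{1}{r^{4}}\,I^{+}(r)\,I^{-}(r)
\]
is, up to an additive error controlled by $1+\int_{B_{1}}|U^{+}+U^{-}|^{2}$, non-decreasing in $r$ on $(0,\tfrac34)$, and then reading off the inequality by comparing a small radius with a fixed one. First I would reduce to the case $U^{\pm}\in C^{\infty}$ by a standard approximation (mollification preserves $U^{\pm}\ge 0$ and $\Delta U^{\pm}\ge -1$, and since the supports of $U^{\pm}$ are disjoint the constraint $U^{+}U^{-}=0$ is recovered in the limit). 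In this regularised setting $I^{\pm}$ are absolutely continuous and
\[
(I^{\pm})'(r)=\frac{1}{r^{d-2}}\int_{\partial B_{r}}|\nabla U^{\pm}|^{2}\,d\H^{d-1}\qquad\text{for a.e. }r.
\]

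\emph{Logarithmic differentiation and the spherical inequality.} Writing $\log\Phi(r)=-4\log r+\log I^{+}(r)+\log I^{-}(r)$, the main point is the pointwise-in-$r$ estimate
\[
\frac{(I^{\pm})'(r)}{I^{\pm}(r)}\ \ge\ \frac{2\,\gamma^{\pm}(r)}{r}\ -\ \frac{E^{\pm}(r)}{I^{\pm}(r)},
\]
where $\gamma^{\pm}(r)$ is the characteristic exponent of the relatively open set $\omega^{\pm}(r):=r^{-1}\big(\{U^{\pm}>0\}\cap\partial B_{r}\big)\subset\partial B_{1}$, determined by $\gamma^{\pm}\big(\gamma^{\pm}+d-2\big)=\lambda_{1}\big(\omega^{\pm}(r)\big)$ with $\lambda_{1}$ the first Dirichlet eigenvalue of the Laplace--Beltrami operator, and $E^{\pm}(r)$ is an error term vanishing when $\Delta U^{\pm}=0$ (it collects the cross terms produced when one splits $\nabla U^{\pm}$ into its radial and tangential parts on $\partial B_{r}$ and integrates by parts using the equation inside $B_{r}$). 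This inequality follows, as in the homogeneous case, by comparing the Rayleigh quotient of $U^{\pm}$ restricted to $\omega^{\pm}(r)$ with $\lambda_{1}(\omega^{\pm}(r))$. Since $\omega^{+}(r)$ and $\omega^{-}(r)$ are disjoint subsets of $\partial B_{1}$, the Friedland--Hayman inequality gives $\gamma^{+}(r)+\gamma^{-}(r)\ge 2$, so that the principal part of $(\log\Phi)'(r)$ is nonnegative:
\[
-\frac{4}{r}+\frac{2\gamma^{+}(r)}{r}+\frac{2\gamma^{-}(r)}{r}\ =\ \frac{2}{r}\big(\gamma^{+}(r)+\gamma^{-}(r)-2\big)\ \ge\ 0 .
\]

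\emph{Error control and conclusion.} It then remains to control the errors $E^{\pm}$. Using $\Delta U^{\pm}\ge-1$ together with the interior gradient and mean-value estimates of Remark~\ref{gradest}, one bounds $\int_{\partial B_{r}}|\nabla U^{\pm}|^{2}$ from above in terms of $\int_{\partial B_{2r}}(U^{\pm})^{2}$ and $r^{d-1}$, and one checks that $\sum_{\pm}E^{\pm}(r)/I^{\pm}(r)$ is integrable in $\tfrac{dr}{r}$ on $(0,\tfrac34)$ with total controlled by $C_{d}\big(1+\int_{B_{1}}|U^{+}+U^{-}|^{2}\big)$; this shows that $r\mapsto\Phi(r)+C_{d}\big(1+\int_{B_{1}}|U^{+}+U^{-}|^{2}\big)$ (or a suitable multiplicative variant) is essentially non-decreasing on $(0,\tfrac34)$. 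Evaluating at $0<r\le\tfrac12$ and at $r=\tfrac34$, and estimating $\Phi(\tfrac34)$ at unit scale by the Caccioppoli bound $\int_{B_{3/4}}|\nabla U^{\pm}|^{2}\,dx\le C_{d}\big(1+\int_{B_{1}}(U^{\pm})^{2}\,dx\big)$ (obtained by testing $-\Delta U^{\pm}\le 1$ against $U^{\pm}\varphi^{2}$ for a cutoff $\varphi$ supported in $B_{1}$, using $|x|^{-(d-2)}\in L^{1}_{\mathrm{loc}}$), one arrives at the stated inequality. The hard part is precisely this error analysis: unlike in the homogeneous Alt--Caffarelli--Friedman setting, the defect $\Delta U^{\pm}\ge-1$ destroys exact monotonicity of $\Phi$, and one must quantify how the resulting error accumulates so that it stays absorbed in the prescribed right-hand side; the other non-elementary ingredient, used as a black box, is the Friedland--Hayman isoperimetric inequality $\gamma^{+}+\gamma^{-}\ge 2$ on $S^{d-1}$.
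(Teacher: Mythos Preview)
The paper does not prove Theorem~\ref{mth}: it is stated as a known result (the Caffarelli--Jerison--Kenig almost-monotonicity formula, reference \cite{cajeca02}, extending the homogeneous Alt--Caffarelli--Friedman formula \cite{altcaffri}) and is then immediately applied in Corollary~\ref{mon}. So there is no ``paper's own proof'' to compare against; your outline is aimed at reproducing the CJK argument itself.

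Your scheme --- logarithmic differentiation of $\Phi$, the spherical Rayleigh-quotient inequality yielding the characteristic exponents $\gamma^{\pm}$, and the Friedland--Hayman inequality $\gamma^{+}+\gamma^{-}\ge 2$ --- is exactly the skeleton of the ACF/CJK proof, and for the homogeneous case $\Delta U^{\pm}\ge 0$ it is complete. The genuine gap is in your error analysis. You claim that $\sum_{\pm}E^{\pm}(r)/I^{\pm}(r)$ is integrable in $dr/r$ with total bounded by the right-hand side; this is not what happens and is generally false. The obstruction is that $I^{\pm}(r)$ can be arbitrarily small on many scales (nothing prevents, say, $U^{+}$ from vanishing to high order near the origin), so $E^{\pm}/I^{\pm}$ is not controlled pointwise in $r$, and your appeal to interior gradient estimates from Remark~\ref{gradest} is also unjustified since $U^{\pm}$ satisfy only a one-sided Laplacian bound. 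The actual CJK argument is a dyadic dichotomy: at each scale either the spherical trace of $U^{\pm}$ occupies a substantial portion of $\partial B_{r}$, in which case a Poincar\'e-type bound gives $I^{\pm}(r)$ large enough to absorb $E^{\pm}(r)$, or one of the supports is small, in which case the corresponding eigenvalue is large and the Friedland--Hayman term produces strict excess $\gamma^{+}+\gamma^{-}-2>0$ that compensates. Summing the resulting estimates over dyadic annuli is what yields the bound; a direct $dr/r$ integration of the naive error does not. You correctly flag this step as ``the hard part'', but the mechanism you sketch would not close.
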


For our purposes we will need the following rescaled version of this formula.

\begin{cor}\label{mon}
Let $\Omega\subset\R^d$ be a quasi-open set of finite measure, $f\in L^\infty(\Omega)$ and $u:\R^d\to\R$ be a continuous function such that
\begin{equation}\label{mone1}
-\Delta u=f\quad \hbox{in}\quad \Omega,\qquad u\in H^1_0(\Omega).
\end{equation}
Setting $u^+=\sup\{u,0\}$ and $u^-=\sup\{-u,0\}$, there is a dimensional constant $C_d$ such that for each $0<r\le1/2$
\begin{equation}\label{mone2}
\left(\frac{1}{r^2}\int_{B_r}\frac{|\nabla u^+(x)|^2}{|x|^{d-2}}\,dx\right)\left(\frac{1}{r^2}\int_{B_r}\frac{|\nabla u^-(x)|^2}{|x|^{d-2}}\,dx\right)\le C_d\left(\|f\|_{\infty}^2+\int_\Omega u^2\,dx\right)\le C_m,
\end{equation} 
where $C_m=C_d\|f\|_\infty^2\left(1+|\Omega|^{\frac{d+4}{d}}\right)$.
\end{cor}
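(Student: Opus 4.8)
The plan is to deduce Corollary~\ref{mon} from the Alt--Caffarelli--Friedman monotonicity formula (Theorem~\ref{mth}), after normalizing $u$ so as to land in the situation $\Delta U^{\pm}\ge -1$ on $B_1$. First I would record the elementary properties of $u^{\pm}=\sup\{\pm u,0\}$: since $u$ is continuous, $u^{\pm}$ are continuous and non-negative, $u^{+}u^{-}=0$ pointwise, and $u^{\pm}\in H^1(\R^d)$ because $u\in H^1_0(\Omega)\subset H^1(\R^d)$. The delicate point is the distributional inequality $\Delta u^{\pm}\ge -\|f\|_\infty$ on the whole of $\R^d$: this follows from $-\Delta u=f$ in $\Omega$ together with $u\in H^1_0(\Omega)$, exactly as in the structural discussion of Section~\ref{prel}, namely that $\Delta u^{+}+fI_{\{u>0\}}$ and $\Delta u^{-}-fI_{\{u<0\}}$ are non-negative measures (the extra mass carried by $\Delta u^{\pm}$ on $\partial\Omega$ has the favourable sign) and $|f|\le\|f\|_\infty$ a.e.\ on $\Omega$.

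If $\|f\|_\infty=0$ then testing \eqref{mone1} with $u$ gives $\int|\nabla u|^2=0$, hence $u$ is a.e.\ constant on $\R^d$, hence $u\equiv 0$ since $u\in L^2(\R^d)$; then both sides of \eqref{mone2} vanish. So I may assume $\|f\|_\infty>0$ and set $U^{\pm}:=u^{\pm}/\|f\|_\infty$. Then $U^{\pm}$ are continuous, non-negative, $U^{+}U^{-}=0$, $U^{\pm}\in H^1(B_1)$ and $\Delta U^{\pm}\ge -1$ on $B_1$, so Theorem~\ref{mth} applies and gives, for $0<r<1/2$,
\[
\left(\frac{1}{r^{2}}\int_{B_r}\frac{|\nabla U^{+}|^2}{|x|^{d-2}}\,dx\right)\left(\frac{1}{r^{2}}\int_{B_r}\frac{|\nabla U^{-}|^2}{|x|^{d-2}}\,dx\right)\le C_d\left(1+\int_{B_1}|U^{+}+U^{-}|^2\,dx\right).
\]
Multiplying back by the appropriate power of $\|f\|_\infty$, using $|U^{+}+U^{-}|^2=u^2/\|f\|_\infty^2$ together with $\int_{B_1}u^2\le\int_{\R^d}u^2=\int_\Omega u^2$ (the last equality because $u\in H^1_0(\Omega)$), and letting $r\uparrow 1/2$ to cover the endpoint, yields the first inequality of \eqref{mone2} with a dimensional constant.

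For the second inequality it remains to estimate $\int_\Omega u^2$. By the weak maximum principle, comparing $u$ with $\pm\|f\|_\infty\,w_\Omega$, where $w_\Omega\in H^1_0(\Omega)$ solves $-\Delta w_\Omega=1$, one gets $|u|\le\|f\|_\infty\,w_\Omega$ a.e., hence $\|u\|_\infty\le\|f\|_\infty\|w_\Omega\|_\infty\le C_d\|f\|_\infty|\Omega|^{2/d}$ by Talenti's bound \cite{talenti}. Therefore $\int_\Omega u^2\le\|u\|_\infty^2|\Omega|\le C_d\|f\|_\infty^2|\Omega|^{\frac{d+4}{d}}$, so that $C_d\big(\|f\|_\infty^2+\int_\Omega u^2\big)\le C_d\|f\|_\infty^2\big(1+|\Omega|^{\frac{d+4}{d}}\big)=C_m$.

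The genuinely substantive ingredient, Theorem~\ref{mth}, is assumed, so the corollary is mostly bookkeeping; the main obstacle is to secure the global inequality $\Delta u^{\pm}\ge -\|f\|_\infty$ on $\R^d$, in particular to control the singular part of $\Delta u$ concentrated on $\partial\Omega$ (this is precisely what the structural statements recalled in Section~\ref{prel} provide). A secondary, purely computational point is to track the powers of $\|f\|_\infty$ introduced by the normalization $U^{\pm}=u^{\pm}/\|f\|_\infty$ so that the constant comes out in the exact form of \eqref{mone2}.
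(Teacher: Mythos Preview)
Your proof is correct and follows the paper's argument essentially verbatim for the first inequality in \eqref{mone2}: normalize by $\|f\|_\infty$ and apply Theorem~\ref{mth}. For the second inequality the paper bounds $\int_\Omega u^2$ slightly differently, via the Faber--Krahn/Poincar\'e inequality $\|u\|_{L^2}^2\le C_d|\Omega|^{2/d}\|\nabla u\|_{L^2}^2$ combined with testing \eqref{mone1} against $u$ and H\"older, rather than your maximum-principle comparison $|u|\le\|f\|_\infty w_\Omega$ plus Talenti; both routes yield the same bound $\int_\Omega u^2\le C_d\|f\|_\infty^2|\Omega|^{(d+4)/d}$.
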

\begin{proof}
We apply Theorem \ref{mth} to $U^\pm=\|f\|_\infty^{-1}u^\pm$ and substituting in \eqref{mthe1} we obtain the first inequality  in \eqref{mone2}. The second one follows, using the equation \eqref{mone1}:
\begin{equation}\label{mone3}
\|u\|_{L^2}^2\le C_d|\Omega|^{2/d}\|\nabla u\|_{L^2}^2=C_d|\Omega|^{2/d}\int_\Omega fu\,dx\le C_d|\Omega|^{2/d+1/2}\|f\|_{\infty}\|u\|_{L^2}.
\end{equation} 
\end{proof}

The proof of the Lipschitz continuity of the quasi-minimizers for $J_f$ needs two preliminary results, precisely in Lemma \ref{contstate} we prove the continuity of $u$ and in Lemma \ref{l2}, we give an estimate on the Laplacian of $u$ as a measure on the boundary $\partial\{u\neq0\}$. 

\begin{lemma}\label{contstate}
Suppose that $u$ satisfies the conditions $(a)$ and $(b)$ from Theorem \ref{main}. Then $u$ is continuous. 
\end{lemma}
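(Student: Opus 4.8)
The plan is to establish the continuity of the quasi-minimizer $u$ by combining the pointwise representation formula from Proposition \ref{bhpprop} with the monotonicity formula (Corollary \ref{mon}), following the strategy of Brian\c{c}on, Hayouni and Pierre. Since $u$ solves $-\Delta u = f$ on the open set $\{u\neq 0\}$ with $f\in L^\infty$, the function $u$ is automatically smooth (hence continuous) in the interior of $\{u\neq 0\}$ by elliptic regularity. The only points at which continuity is in question are the points $x_0$ of the free boundary $\partial\{u\neq 0\}$, where we must show $u(x_0)=0$ and that $u$ has no jump there; by Proposition \ref{bhpprop} we already know every such point is a Lebesgue point, so it suffices to prove $\lim_{r\to 0}\meantext{\partial B_r(x_0)}u\,d\mathcal{H}^{d-1}=0$ whenever $x_0\in\partial\{u\neq 0\}$.

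First I would fix $x_0\in\partial\{u\neq0\}$ and, after a translation, assume $x_0=0$. The key is to control $u(0)=\lim_{r\to 0}\meantext{\partial B_r}u\,d\mathcal{H}^{d-1}$. Using the identity \eqref{pwde4} from Proposition \ref{bhpprop},
$$\mean{\partial B_R}u\,d\mathcal{H}^{d-1}-u(0)=\frac{1}{d\omega_d}\int_0^R r^{1-d}\Delta u(B_r)\,dr,$$
together with the quasi-minimality estimate \eqref{qminequi3} (equivalently condition (b)), one gets a bound of the form $|\Delta u(B_r)|\le C r^{d-1}$ once one knows $u(0)=0$ — but to get $u(0)=0$ in the first place one argues by contradiction. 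The plan is: if $u(0)\neq 0$, WLOG $u(0)>0$; then by the Lebesgue-point property and the mean-value representation, $u$ is bounded below by a positive constant on a small ball, so $0$ would be an interior point of $\{u>0\}$, contradicting $0\in\partial\{u\neq 0\}$. The delicate case is a \emph{two-phase} point, where both $\{u>0\}$ and $\{u<0\}$ have positive density at $0$: here one applies the Alt–Caffarelli–Friedman monotonicity formula in the rescaled form of Corollary \ref{mon}. Writing $\Phi^\pm(r) = \frac{1}{r^2}\int_{B_r}\frac{|\nabla u^\pm|^2}{|x|^{d-2}}\,dx$, the bound $\Phi^+(r)\Phi^-(r)\le C_m$ forces one of the two densities of the Dirichlet energy to vanish as $r\to 0$; combined with the sub-harmonicity of $u^+$ and $u^-$ (up to the bounded term $\|f\|_\infty$) and the boundary estimate in Remark \ref{gradest}, this yields $\meantext{\partial B_r}|u|\,d\mathcal{H}^{d-1}\to 0$, hence $u(0)=0$ and continuity at $0$.

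Concretely, the steps in order would be: (1) interior regularity of $u$ on $\{u\neq 0\}$, reducing the problem to free-boundary points; (2) for a point $0$ where (say) $\{u<0\}$ has zero density, use the Alt–Caffarelli inequality \eqref{altcafineq} applied to $u^+$ together with the quasi-minimality \eqref{qminequi3} to deduce that the density $|B_r\cap\{u^+=0\}|/|B_r|$ cannot stay bounded away from zero while $\meantext{\partial B_r}u^+\,d\mathcal{H}^{d-1}$ stays bounded away from zero, forcing $\meantext{\partial B_r}u^+\,d\mathcal{H}^{d-1}\to 0$; (3) for a genuine two-phase point, invoke Corollary \ref{mon} to rule out that both $u^+$ and $u^-$ have nontrivial normalized Dirichlet energy at all scales, reducing again to the one-phase analysis for whichever phase survives; (4) conclude $u(0)=0$ at every free-boundary point, and since every point is a Lebesgue point with the spherical-mean representation, $u$ is continuous on $\mathbb{R}^d$.

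The main obstacle I anticipate is step (2)–(3): the Alt–Caffarelli inequality \eqref{altcafineq} is stated only for nonnegative functions and for the obstacle-type competitor $v\ge u$ solving \eqref{altcafv}, so transferring it to the sign-changing setting requires either working separately with $u^+$ and $u^-$ (which are only subsolutions, not solutions, of the relevant equations, because of the contribution of the singular measure $\mu$ along the interface) or absorbing the interface contribution via the monotonicity formula. Making the monotonicity bound $\Phi^+(r)\Phi^-(r)\le C_m$ interact cleanly with the quasi-minimality hypothesis \eqref{qminequi3} — so that one really concludes the spherical mean tends to zero rather than merely staying bounded — is the technical heart of the argument and is precisely where the care of \cite{bhp05} is needed.
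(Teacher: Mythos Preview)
Your plan has two genuine gaps, and it also diverges substantially from the paper's argument.

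\textbf{First gap: the monotonicity step does not do what you claim.} In step (3) you write that the bound $\Phi^+(r)\Phi^-(r)\le C_m$ from Corollary~\ref{mon} ``forces one of the two densities of the Dirichlet energy to vanish as $r\to 0$.'' This is false: a uniform bound on a product says nothing about either factor tending to zero --- both $\Phi^+$ and $\Phi^-$ could stay bounded away from zero and from infinity for all small $r$. The Alt--Caffarelli--Friedman formula in its sharp form gives monotonicity of the product (hence a limit as $r\to 0$), but even that limit may be positive, and in any case Corollary~\ref{mon} as stated only provides a bound, not monotonicity. So step (3) does not reduce the two-phase case to a one-phase case, and the whole chain (2)--(3)--(4) collapses.

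\textbf{Second gap: you conflate $u(x_0)=0$ with continuity at $x_0$.} Even granting that $\lim_{r\to 0}\meantext{\partial B_r(x_0)}u\,d\mathcal{H}^{d-1}=0$ at every free-boundary point, this tells you $u(x_0)=0$ in the precise-representative sense; it does \emph{not} tell you that $u(x_n)\to 0$ for every sequence $x_n\to x_0$. Moreover, your reduction in step (1) to ``free-boundary points'' presupposes that $\{u\neq 0\}$ is open, which is exactly what continuity would give you --- so the decomposition is circular.

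\textbf{How the paper actually argues.} The paper's proof of Lemma~\ref{contstate} uses neither the monotonicity formula nor the Alt--Caffarelli density inequality~\eqref{altcafineq}. Instead it runs a blow-up: given $x_n\to x_\infty$ with $|B_{\delta_n}(x_\infty)\cap\{u=0\}|>0$ for all $n$, it rescales $u_n(\xi)=u(x_\infty+\delta_n\xi)$, compares $u_n$ with its harmonic replacement on large balls, and uses condition~(b) directly to show $u_n$ converges in $H^1_{loc}$ to a bounded harmonic (hence constant) limit $u_\infty$. The identification $u_\infty=0$ (Step~4) comes from testing condition~(b) with cutoff functions centred at points $y_n$ where $u$ vanishes, yielding $\mu_1(B_s(y_n))\le C s^{d-1}+\mu_2(B_{2s}(y_n))$ and hence a comparison of the spherical means of $u^+$ and $u^-$. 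Uniform convergence (Step~5) then gives honest continuity. The monotonicity formula enters only later, in Lemma~\ref{l2}, to upgrade continuity to a quantitative Lipschitz bound.
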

\begin{proof}
Let $x_n\to x_\infty\in\R^d$ and set $\delta_n:=|x_n-x_\infty|$. If for some $n$, $|B(x_\infty,\delta_n)\cap\{u=0\}|=0$, then $-\Delta u=f$ in $B(x_\infty,\delta_n)$ and so $u$ is continuous in $x_\infty$.\\
Assume now that for all $n$, $|B(x_\infty,\delta_n)\cap\{u=0\}|\neq0$ and consider the function $u_n:\R^d\to\R$ defined by $u_n(\xi)=u(x_\infty+\delta_n\xi)$. Since $\|u_n\|_\infty=\|u\|_\infty$, for any $n$, we can assume, up to a subsequence, that $u_n$ converges weakly-$\ast$ in $L^\infty$ to some function $u_\infty\in L^\infty(\R^d)$.\\
\emph{If we prove that $u_\infty=0$ and that $u_n\to u_\infty$ uniformly on $B_1$, then we would have that $u$ is continuous and $u(x_\infty)=0$.}\\
\emph{Step 1. $u_\infty$ is a constant.}\\
For all $R\ge1$ and $n\in\N$, we introduce the function $v_{R,n}$ such thay:
\begin{equation}
\begin{cases}
\begin{array}{rl}
-\Delta v_{R,n}=f,&\ \hbox{in}\ B_{R\delta_n}(x_\infty),\\
v_{R,n}=u,&\ \hbox{on}\ \partial B_{R\delta_n}(x_\infty).
\end{array}
\end{cases}
\end{equation}

Setting $v_n(\xi):=v_{R,n}(x_\infty+\delta_n\xi)$, we have that 
\begin{align*}
\int_{B_R}|\nabla(u_n-v_n)|^2d\xi&=\delta_n^{2-d}\int_{B(x_\infty,R\delta_n)}|\nabla(u-v_{R,n})|^2dx\\
&=\delta_n^{2-d}\int_{B(x_\infty,R\delta_n)}\nabla u\cdot\nabla(u-v_{R,n})dx-\delta_n^{2-d}\int_{B(x_\infty,R\delta_n)} f(u-v_{R,n})dx\\
&\le C_b\delta_n^{2-d}\left(\int_{B(x_\infty,R\delta_n)}|\nabla(u-v_{R,n})|^2dx\right)^{1/2}R^{d/2}\delta_n^{d/2}\\
&\le C_bR^{d/2}\delta_n\left(\int_{B_R}|\nabla(u_n-v_{n})|^2d\xi\right)^{1/2},
\end{align*}
and thus, for $\delta_n\le r_0$, we have 
\begin{equation}
\mean{B_R}{|\nabla(u_n-v_n)|^2\,d\xi}\le C_b^2\delta_n^2,
\end{equation}
where $C_b$ is the constant from \eqref{b}. In particular, $u_n-v_n\to 0$ in $H^1(B_R)$ for any $R\ge1$. On the other hand, we have that
\begin{equation}
\begin{cases}
\begin{array}{rl}
-\Delta v_n=\delta_n^2f,&\ \hbox{in}\ B_R,\\
v_n\le\|u\|_\infty,&\ \hbox{on}\ \partial B_R.
\end{array}
\end{cases}
\end{equation}
Thus, $v_n$ are equi-bounded (by the maximum principle) and equi-continuous (by Remark \ref{gradest}) on the ball $B_{R/2}$ and so, the sequence $v_n$ uniformly converges to some function which is harmonic on $B_{R/2}$. By the uniqueness of the weak-$\ast$ limit in $L^\infty$, we have that this function is precisely $L^\infty$. Thus, $u_\infty$ is a harmonic function on each $B_{R/2}$ and so, on $\R^d$. Since it is bounded, it is a constant.\\ 
\emph{Step 2. $u_n\to u_\infty$ in $H^1_{loc}(\R^d)$.}\\
 In fact, for the functions $\widetilde v_n=v_n-u_\infty$, we have that 
\begin{equation}
\begin{cases}
\begin{array}{rl}
-\Delta\widetilde v_n=\delta_n^2f,&\ \hbox{in}\ B_R,\\
\widetilde v_n\le2\|u\|_\infty,&\ \hbox{on}\ \partial B_R,
\end{array}
\end{cases}
\end{equation}
and $\widetilde v_n\to0$ uniformly on $B_{R/2}$. By Remark \ref{gradest}, we have that $\|\nabla\widetilde v_n\|_{L^\infty(B_{R/4})}\to0$ and so, $v_n\to u_\infty$ in $H^1(B_{R/4})$ and the same holds for $u_n$.\\

\emph{Step 3. If $u_\infty\ge 0$, then $u_n^-\to0$ uniformly on balls.}\\
Since on $\{u_n<0\}$, the equality $-\Delta u_n^-=-\delta_n^2f$ holds, we have that $-\Delta u_n^-\le -\delta_n^2fI_{\{u_n<0\}}\le\delta_n^2|f|$ on $\R^d$. Thus, it is enough to prove that for each $R\ge1$, $\widetilde u_n\to 0$ uniformly on $B_{R/2}$, where 
\begin{equation}
\begin{cases}
\begin{array}{rl}
-\Delta \widetilde u_n=\delta_n^2|f|, &\ \hbox{in}\ B_R,\\
\widetilde u_n=u_n^-,&\ \hbox{on}\  \partial B_R.  
\end{array}
\end{cases}
\end{equation}
Since $u_n^-\to0$ in $H^1(B_R)$, we have that $\int_{\partial B_R}u_n^-\to 0$. Writing $\widetilde u_n=\widetilde w_n+\widetilde u_h$, where $\widetilde w_n\in H^1_0(B_R)$, $-\Delta \widetilde w_n=\delta_n^2|f|$ and $\widetilde u_h$ is the harmonic function on $B_R$ with boundary values equal to $\widetilde u_n$, we have the thesis of Step 3.\\

\emph{Step 4. $u_\infty=0$}\\
Suppose that $u_\infty\ge0$. Let $y_n=x_\infty+\delta_n\xi_n$, where $\xi_n\in B_1$, be such that $u(y_n)=0$. For each $s>0$ consider the function $\phi_s\in C^{\infty}_c(B(y_n,2s))$ such that $0\le\phi_s\le1$, $\phi_s=1$ on $B(y_n,s)$ and $\|\nabla\phi_s\|_{L^\infty}\le\frac{C_d}{s}$, where $C_d$ is some constant depending only on the dimension $d$. Thus, we have that 
\begin{equation}
|\langle\Delta u+f,\phi_s\rangle|\le C_dC_bs^{d-1},
\end{equation} 
where $C$ is the constant from \eqref{b}. Denote with $\mu_1$ and $\mu_2$ the positive Borel measures $\Delta u^++fI_{\{u>0\}}$ and $\Delta u^--fI_{\{u<0\}}$. Then, we have
\begin{equation}
\mu_1(B_s(y_n))\le\langle\mu_1,\phi_s\rangle=\langle\mu_1-\mu_2,\phi_s\rangle+\langle\mu_2,\phi_s\rangle\le C_dC_bs^{d-1}+\mu_2(B_{2s}(y_n)).
\end{equation}
Moreover, since $f\in L^{\infty}$, we have that for each $s\le1$,
\begin{equation}
\Delta u^+(B_s(y_n))\le \big(C_dC_b+(1+2^d)\|f\|_\infty\big)s^{d-1}+\Delta u^-(B_{2s}(y_n)).
\end{equation}
Multiplying by $s^{1-d}$ and integrating, we obtain
\begin{equation}
\mean{\partial B_{\delta_n}(y_n)}{u^+\,d\H^{d-1}}\le\frac12\mean{\partial B_{2\delta_n}(y_n)}{u^-\,d\H^{d-1}}+\big(C_dC_b+(1+2^d)\|f\|_\infty\big)\delta_n,
\end{equation}
or, equivalently,
\begin{equation}
\mean{\partial B_1}{u_n^+(\xi_n+\cdot)\,d\H^{d-1}}\le\mean{\partial B_2}{u^-(\xi_n+\cdot)\,d\H^{d-1}}+\big(C_dC_b+(1+2^d)\|f\|_\infty\big)\delta_n.
\end{equation}
Since, the right-hand side goes to zero as $n\to\infty$, so does the left-hand side. Up to a subsequence, we may assume that $\xi_n\to\xi_\infty$ and so, $u_n(\xi_n+\cdot)\to u_\infty(\xi_\infty+\cdot)=u_\infty$ in $H^1_{loc}(\R^d)$. Thus $u_\infty=0$.\\
\emph{Step 5. The convergence $u_n\to 0$ is uniform on the ball $B_1$.}\\
We already know that $u_n\to 0$ in $H^1_{loc}(\R^d)$. Moreover, by the same argument as in Step 3, we have that
\begin{equation}
-\Delta |u_n|\le \delta_n^2|f|,
\end{equation}
 in $\R^d$ and that $|u_n|\to 0$ uniformly on any ball.
\end{proof} 

\begin{oss}\label{rmkcont}
In $\R^2$ the continuity of the state function $u$, from Theorem \ref{main}, can be deduced by the classical Alt-Caffarelli argument, which we apply after reducing the problem to the case when $u$ is positive. 
For example, if $u\in H^1(\R^2)$ is a function satisfying  
$$J_{\lambda u}(u)+c|\{u\neq0\}|\le J_{\lambda u}(v)+c|\{v\neq 0\}|,\qquad\forall v\in H^1(\R^2),$$
for some $\lambda>0$, then $u$ is continuous. Indeed, let $x_0\in\R^d$ be such that $u(x_0)>0$ and let $r_0>0$ and $\eps>0$ be small enough such that, for every $x\in\R^d$ and every $r\le r_0$, we have $\int_{B_r(x)}|\nabla u|^2\,dx\le \eps$. As a consequence, for every $x\in\R^d$ there is some $r_x\in[r_0/2,r_0]$ such that
$\int_{\partial B_{r_x}(x)}|\nabla u|^2\,dx\le 2\eps/r_0$ and 
\begin{equation}\label{osc}
\hbox{osc}_{\partial B_{r_x}(x)}u\le \int_{\partial B_{r_x}(x)}|\nabla u|\,d\H^1\le \sqrt{2\pi r_0}\sqrt{2\eps/r_0}\le\sqrt{4\pi\eps}.
\end{equation}
On the other hand, the positive part $u^+=\sup\{u,0\}$ of $u$ satisfies $\Delta u^++\lambda\|u\|_\infty\ge 0$ on $\R^d$ and so, there is a constant $C>0$ such that
$$u(x_0)\le \mean{\partial B_{r_{x_0}}(x_0)}{ u\,d\H^1}+ Cr_{x_0}^2,$$
which together with \eqref{osc} gives that, choosing $r_0>0$ small enough, we can construct a ball $B_r(x_0)$ of radius $r\le r_0$ such that $u\ge u(x_0)/2>0$ on $\partial B_r(x_0)$. 

We then notice that the set $\{u<0\}\cap B_r(x_0)$ has measure $0$. Indeed, if this is not the case, then the function $\widetilde u=\sup\{-u,0\}\ind_{B_r(x_0)}\in H^1_0(B_r(x_0))$ is such that $J_{\lambda u}(u)=J_{\lambda u}(-\widetilde u)+J_{\lambda u}(u+\widetilde u)$. By the maximum principle $\|\widetilde u\|_\infty\le Cr_0^2$ and so, for some constant $C>0$, we have 
$$ \big|J_{\lambda u}(-\widetilde u)\big|\le Cr_0^2\big|\{u<0\}\cap B_r(x_0)\big|< c\big|\{u<0\}\cap B_r(x_0)\big|,$$
for $r_0$ small enough.
Hence we have $J_{\lambda u}(-\widetilde u)+c|\{u<0\}\cap B_r(x_0)|>0$, that contradicts the quasi-minimality of $u$.

We conclude the proof by showing that the set $\{u=0\}\cap B_{r}(x_0)$ has measure $0$. We compare $u$ with the function $w=\ind_{B^c_{r}(x_0)}u+\ind_{B_{r}(x_0)}v$, where $v$ is the function from \eqref{altcafv}. 
\begin{align*}
c\big|\{u=0\}\cap B_r(x_0)\big|&\ge J_{\lambda u}(u)-J_{\lambda u}(w)\\
&=\frac12\int_{B_{r}(x_0)}\big(|\nabla u|^2-|\nabla v|^2\big)\,dx-\int_{B_{r}(x_0)}\lambda u(u-v)\,dx\\
&\ge \frac12 \int_{B_{r}(x_0)}|\nabla (u-v)|^2\,dx\\
&\ge \frac{C_2}{r^2}\big|\{u=0\}\cap B_r(x_0)\big|\Big(\mean{\partial B_{r}(x_0)}{u\,d\H^{1}}\Big)^2,
\end{align*} 
where the last inequality is due to \eqref{altcafineq}. If we suppose that $|\{u=0\}\cap B_r(x_0)|>0$, then for some constant $C>0$, we would have $u(x_0)\le  Cr_0^2$, which is absurd choosing $r_0>0$ small enough.

\end{oss}

\begin{lemma}\label{l2}
Let $u\in H^1(\R^d)$ satisfies the conditions $(a)$ and $(b)$ from Theorem \ref{main}. Then, for each  $x_0\in\R^d$, in which $u$ vanishes, and each $0<r\le r_0/4$, where $r_0$ is the constant from condition $(b)$ in Theorem \ref{main}, we have that
\begin{equation}
|\Delta |u||(B_r(x_0))\le C\,r^{d-1},
\end{equation}
where the constant $C$ is given by the expression $C=C_d(C_b+\sqrt{C_m}+1)$, where $C_d$ is a constant depending only on the dimension, $C_b$ is the constant from \eqref{b} and $C_m$ is the constant from the monotonicity formula \eqref{mon}.   
\end{lemma}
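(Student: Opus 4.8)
The plan is to bound, separately, the two boundary measures attached to the positive and negative parts of $u$. Write $u^\pm=\sup\{\pm u,0\}$ and recall (from Section~\ref{prel} and the sign information in Remark~\ref{gradest}) that $\mu_1:=\Delta u^++fI_{\{u>0\}}$ and $\mu_2:=\Delta u^--fI_{\{u<0\}}$ are non-negative Radon measures; since $u$ is continuous by Lemma~\ref{contstate}, the sets $\{u>0\}$, $\{u<0\}$ are open and $\mu_1,\mu_2$ are supported on $\partial\{u>0\}$, $\partial\{u<0\}$. As $\Delta|u|=\Delta u^++\Delta u^-=\mu_1+\mu_2+f\big(I_{\{u<0\}}-I_{\{u>0\}}\big)$, one has
$$|\Delta|u||(B_r(x_0))\le \mu_1(B_r(x_0))+\mu_2(B_r(x_0))+\|f\|_\infty|B_r(x_0)|,$$
and, since $r\le r_0/4\le1$, the last term is already $\le C_d\|f\|_\infty r^{d-1}$. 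Thus everything reduces to estimating $\mu_1(B_r(x_0))$ and $\mu_2(B_r(x_0))$ by $C\,r^{d-1}$.

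For this I would use two estimates, both tested on a single cutoff $\phi\in C^\infty_c(B_{2r}(x_0))$ with $0\le\phi\le1$, $\phi\equiv1$ on $B_r(x_0)$, $|\nabla\phi|\le C_d/r$ (admissible since $2r\le r_0$). First, integrating $\mu_i$ against $\phi$ and integrating by parts,
$$\mu_i(B_r(x_0))\le\langle\mu_i,\phi\rangle\le C_d\,r^{\frac{d-2}{2}}\,\|\nabla u^{\pm}\|_{L^2(B_{2r}(x_0))}+C_d\|f\|_\infty r^{d}$$
(with the sign matching $i$), because $\|\nabla\phi\|_{L^2}\le C_dr^{\frac{d-2}{2}}$ and $\|\phi\|_{L^1}\le C_dr^d$. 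Second, since $\mu_1-\mu_2=\Delta u+fI_{\{u\neq0\}}=(\Delta u+f)-fI_{\{u=0\}}$, the weak minimality \eqref{b} applied to $\phi$ gives $|\langle\Delta u+f,\phi\rangle|\le C_b\|\nabla\phi\|_{L^2}|B_{2r}|^{1/2}\le C_bC_d r^{d-1}$, hence
$$\big|\langle\mu_1-\mu_2,\phi\rangle\big|\le C_bC_d\,r^{d-1}+C_d\|f\|_\infty r^{d}.$$
Combining, $\mu_j(B_r(x_0))\le\langle\mu_j,\phi\rangle\le\langle\mu_i,\phi\rangle+|\langle\mu_1-\mu_2,\phi\rangle|$ for $\{i,j\}=\{1,2\}$; so as soon as \emph{one} of the two phases, say $u^{\pm}$, satisfies $\int_{B_{2r}(x_0)}|\nabla u^{\pm}|^2\,dx\le C\,r^{d}$, both $\mu_1(B_r(x_0))$ and $\mu_2(B_r(x_0))$ are $\le C'r^{d-1}$.

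The remaining step is to produce such an energy bound for one of the phases, and this is exactly what the Alt--Caffarelli--Friedman monotonicity formula gives. Applying Corollary~\ref{mon} to $u$ — licit because $u$ is continuous by Lemma~\ref{contstate} and satisfies condition $(a)$ on the finite-measure set $\{u\neq0\}$ — centred at $x_0$ (by a translation) and at radius $2r\le1/2$, the product of the two quantities $\frac{1}{(2r)^2}\int_{B_{2r}(x_0)}|x-x_0|^{2-d}|\nabla u^{\pm}|^2\,dx$ is $\le C_m$, so at least one of them is $\le\sqrt{C_m}$; since $|x-x_0|\le2r$ on $B_{2r}(x_0)$, the corresponding phase obeys $\int_{B_{2r}(x_0)}|\nabla u^{\pm}|^2\,dx\le(2r)^{d-2}\,\sqrt{C_m}\,(2r)^2=2^d\sqrt{C_m}\,r^{d}$. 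Feeding this into the estimates above and collecting constants (using $\|f\|_\infty\le C_d\sqrt{C_m}$ from the definition of $C_m$, and $C_m^{1/4}\le\sqrt{C_m}+1$) yields $|\Delta|u||(B_r(x_0))\le C_d(C_b+\sqrt{C_m}+1)\,r^{d-1}$, as claimed.

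The one genuinely delicate point is the decision to centre the monotonicity formula at $x_0$: it is precisely this that turns the scale-invariant \emph{weighted} energy controlled by the formula into an honest bound on $\int_{B_{2r}(x_0)}|\nabla u^{\pm}|^2$, and thereby lets one transfer the estimate to the other phase \emph{on the same scale} through \eqref{b}, instead of through an iteration over dyadic scales. A naive argument based only on \eqref{b} produces a recursion relating $\mu_i$ on $B_r$ to $\mu_j$ on $B_{2r}$, which never closes; the monotonicity formula breaks the two-phase structure at the fixed scale $r$ and removes the need for iteration, which is also why the resulting constant is independent of $r_0$ (as needed for Theorem~\ref{main}(2) and Theorem~\ref{cirm03}).
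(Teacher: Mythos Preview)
Your argument is correct and takes a genuinely different, more direct route than the paper's. The paper introduces harmonic-type replacements $v^r_\pm$ of $u^\pm$ on $B_r$, uses condition~(b) to bound $\int_{B_r}|\nabla(u-v^r)|^2$, combines this with the monotonicity formula to obtain $\int_{B_r}|\nabla(u^\pm-v^r_\pm)|^2\le C\,r^d$ for \emph{both} phases at once, and then extracts the measure estimate by a potential-theoretic self-improvement: from $\int_{B_r}v^r_+\,d\mu_1\le C r^d$ and the lower bound $v^r_+(z)\gtrsim\int_0^{3r/4}s^{1-d}\mu_1(B_s(z))\,ds$ (Proposition~\ref{bhpprop}) one gets $[\mu_1(B_{r/4})]^2\le C\,r^{2d-2}$. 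You bypass the replacements entirely: pairing $\mu_i$ with a cutoff gives $\mu_i(B_r)\lesssim r^{(d-2)/2}\|\nabla u^{\pm}\|_{L^2(B_{2r})}+\|f\|_\infty r^d$; the monotonicity formula forces one phase to satisfy $\|\nabla u^{\pm}\|_{L^2(B_{2r})}^2\le \sqrt{C_m}\,(2r)^d$, which controls that $\mu_i$; and the bound on $|\langle\mu_1-\mu_2,\phi\rangle|$ from~\eqref{b} transfers the estimate to the other phase. This is shorter and avoids both the auxiliary functions $v^r_\pm,w^r_\pm$ and the squaring trick of the paper. The paper's approach, on the other hand, treats both phases symmetrically and never needs to single out ``the small one'', which is perhaps conceptually cleaner; but for the stated lemma your method is entirely adequate and yields the same constant structure $C=C_d(C_b+\sqrt{C_m}+1)$.
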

\begin{proof}
Without loss of generality we can suppose $x_0=0$. For each $r>0$, consider the functions 
$$v^r:=v^r_+-v^r_-,\qquad w^r:=w^r_+-w^r_-,$$
where $v^r_\pm$ and $w^r_\pm$ are solutions of the following equations on $B_r$
\begin{equation}\label{l2e1}
\begin{array}{lll}
\begin{cases}
\begin{array}{rr}
-\Delta v^r_\pm=f^\pm,\ \hbox{in}\ B_r,\\
v^r_\pm=u^\pm,\ \hbox{on}\ \partial B_r,
\end{array}
\end{cases} & 
\begin{cases}
\begin{array}{rr}
-\Delta w^r_\pm=f^\pm,\ \hbox{in}\ B_r,\\
v^r_\pm=0,\ \hbox{on}\ \partial B_r.
\end{array}
\end{cases} 
\end{array}
\end{equation}

Thus we have that $v^r_\pm-w^r_\pm$ is harmonic in $B_r$ and so, the estimate
\begin{equation}\label{l2e2}
\int_{B_r}|\nabla(v^r_\pm-w^r_\pm)|^2\,dx\le \int_{B_r}|\nabla u^\pm|^2\,dx.
\end{equation}
Since $u^\pm-v^r_\pm+w^r_\pm\in H^1_0(B_r)$, we have 
\begin{align}
\int_{B_r}|\nabla (u^\pm-v^r_\pm+w^r_\pm)|^2\,dx & =\int_{B_r}\nabla u^\pm\cdot\nabla (u^\pm-v^r_\pm+w^r_\pm)\,dx\nonumber\\
&=\int_{B_r}|\nabla u^\pm|^2\,dx+\int_{B_r}\nabla u^\pm\cdot\nabla (w^r_\pm-v^r_\pm)\,dx\\
&\le 2\int_{B_r}|\nabla u^\pm|^2\,dx,\nonumber 
\end{align}
where the last inequality is due to \eqref{l2e2}. Thus, we obtain
\begin{equation}\label{l2e3}
\begin{array}{lll}
\ds\left(\mean{B_r}{|\nabla (u^+-v^r_++w^r_+)|^2\,dx}\right)\left(\mean{B_r}{|\nabla (u^--v^r_-+w^r_-)|^2\,dx}\right)\\
\ds\qquad\qquad\qquad\qquad\qquad\le 4\left(\mean{B_r}{|\nabla u^+|^2\,dx}\right)\left(\mean{B_r}{|\nabla u^-|^2\,dx}\right)\\
\ds\qquad\qquad\qquad\qquad\qquad\le 4C_m,
\end{array}
\end{equation}
where the last inequality is due to the monotonicity formula \eqref{mon} and $C_m$ is the constant that appears there.\\
On the other hand, for $0<r\le r_0\le1$, we have 
\begin{equation}\label{l2e4}
\begin{array}{lll}
\ds\int_{B_r}|\nabla (u-v^r+w^r)|^2\,dx&\ds \le 2\int_{B_r}|\nabla (u-v^r)|^2\,dx+2\int_{B_r}|\nabla w^r|^2\,dx\\
&\ds=2\int_{B_r}\left[\nabla u\cdot\nabla(u-v^r)+f(u-v_r)\right]\,dx+2\int_{B_r}|\nabla w^r|^2\,dx\\
&\ds\le C_b^2r^d+C_dr^d,
\end{array}
\end{equation}
where $C_b$ is the constant from condition $(b)$. Using \eqref{l2e3} and \eqref{l2e4}, we have

\begin{equation}\label{l2e5}
\begin{array}{lll}
\ds\int_{B_r}|\nabla (u^+-v^r_++w^r_+)|^2\,dx+\int_{B_r}|\nabla (u^--v^r_-+w^r_-)|^2\,dx\\
\ds\qquad\le \int_{B_r}|\nabla (u-v^r+w^r)|^2\,dx\\
\ds\qquad\qquad+2\left(\int_{B_r}|\nabla (u^+-v^r_++w^r_+)|^2\,dx\right)^{1/2}\left(\int_{B_r}|\nabla (u^--v^r_-+w^r_-)|^2\,dx\right)^{1/2}\\
\ds\qquad\le(C_b^2+4C_m+C_d)r^d.
\end{array}
\end{equation}

Denoting with $C_{b,m,d}$ the constant
\begin{equation}
C_{b,m,d}=2C_b^2+8C_m=C_d,
\end{equation}
we have the estimate
\begin{equation}\label{l2e6}
\int_{B_r}|\nabla (u^\pm-v^r_\pm)|^2\,dx\le C_{b,m,d}r^d.
\end{equation}

Note that $u^+\le v^r_+$. In fact, we have
\begin{equation}\label{l2e7}
\Delta(u^+-v^r_+)=\Delta u^++f^+\ge\Delta u^++fI_{ \{u>0\} },
\end{equation} 
and so, $u^+-v^r_+$ is sub-harmonic in $B_r$ and vanishes on $\partial B_r$ and thus, is negative. Analogously, $\Delta (u^--v^r_-)\ge \Delta u^--fI_{\{u<0\}}$ and $u^-\le v^r_-$. Moreover, by \eqref{l2e7} and the fact that $u^+-v^r_+\in H^1_0(B_r)$, we have that 
\begin{equation}\label{l2e8}
\begin{array}{lll}
\ds\int_{B_r}|\nabla (u^+-v^r_+)|^2\,dx&\ds\ge\int_{B_r}-\nabla(v^r_+-u^+)\cdot\nabla u^++(v^r_+-u^+)fI_{\{u>0\}}\,dx\\
\\
&\ds=\int_{B_r}(v^r_+-u^+)\,d\mu_1=\int_{B_r}v^r_+\,d\mu_1.
\end{array}
\end{equation} 
Applying the estimate \eqref{l2e6} and setting
\begin{equation}\label{l2e9}
\mu_1:=\Delta u^++fI_{\{u>0\}},\qquad\mu_2:=\Delta u^--fI_{\{u<0\}}, 
\end{equation}
we have that 
\begin{equation}\label{l2e10}
\int_{B_r}v^r_+\,d\mu_1\le C_{b,m,d}r^d,\qquad\qquad\int_{B_r}v^r_-\,d\mu_2\le C_{b,m,d}r^d
\end{equation}

Setting $U:=u^+-v^r_+\le 0$ on $B^r$, we have that for each $z\in B_{r/4}$
\begin{equation}\label{l2e11}
\mean{\partial B_{3r/4}(z)}{U\,d\H^{d-1}}\le 0\le u^+(z)=U(z)+v^r_+(z).
\end{equation}

Applying \eqref{pwde4} to $U\in H^1(B_r)$ and using \eqref{l2e7}, we obtain
\begin{equation}\label{l2e12}
\begin{array}{lll}
\ds v^r_+(z)&\ds \ge \int_0^{3r/4}s^{1-d}\int_{B_s(z)}\Delta U(B_s(z))\,ds\\
&\ds \ge \int_0^{3r/4}s^{1-d}\int_{B_s(z)}\mu_1(B_s(z))\,ds.
\end{array}
\end{equation}
Integrating both sides of \eqref{l2e12} on $B_{r/4}$ with respect to $d\mu_1(z)$, we obtain
\begin{equation}\label{l2e13}
\begin{array}{lll}
\ds C_{b,m,d}(r/4)^d &\ds\ge\int_{B_{r/4}}v^r_+(z)\,d\mu_1(z)\\
\\
&\ds\ge\frac{1}{d\omega_d}\int_{B_{r/4}}d\mu_1(z)\,\int_0^{3r/4}s^{1-d}\mu_1(B_s(z))\,ds\\
\\
&\ds\ge\frac{1}{d\omega_d}\int_{B_{r/4}}d\mu_1(z)\,\int_{r/2}^{3r/4}s^{1-d}\mu_1(B_s(z))\,ds\\
\\
&\ds\ge\frac{1}{d\omega_d}\int_{B_{r/4}}d\mu_1(z)\,\int_{r/2}^{3r/4}s^{1-d}\mu_1(B_{r/4})\,ds\\
\\
&\ds\ge C_dr^{2-d}\left[\mu_1(B_{r/4})\right]^2,  
\end{array} 
\end{equation}
which proves the claim.
\end{proof}

\begin{proof}[Proof of Theorem \ref{main}]
Note that we can assume $\Omega=\{u\neq0\}$. Since, by Lemma \eqref{l1}, $u:\R^d\to\R$ is continuous, we have that $\Omega:=\{u\neq0\}$ is open. For any $r>0$, denote with  $\Omega_r\subset\Omega$ the set $\left\{x\in\omega:\,d(x,\Omega^c)<r\right\}$. Choose $x\in\omega_{r_0/2}$ and let $y\in\partial\Omega$ such that $R_x:=|x-y|=d(x,\Omega^c)$. We use the gradient estimate from Remark \ref{gradest} of $u$ on the ball $B_{R_x}(x)$:
\begin{equation}\label{maine1}
\begin{array}{lll}
\ds|\nabla u(x)|&\ds\le C_d\|f\|_{L^\infty}+\frac{2d}{R_x}\|u\|_{L^\infty(B_{R_x}(x))}\\
\\
&\ds\le C_d\|f\|_{L^\infty}+\frac{2d}{R_x}\|u\|_{L^\infty(B_{2R_x}(y))}\\
\\
&\ds\le (C_d+r_0)\|f\|_{L^\infty}+\frac{C_d}{R_x}\mean{\partial B_{3R_x}(y)}{|u|\,d\H^{d-1}}\\
\\
&\ds\le (C_d+r_0)\|f\|_{L^\infty}+\frac{C_d}{R_x}\int_0^{3R_x}s^{1-d}|\Delta |u||(B_s(y))\,ds\\
\\
&\ds\le (C_d+r_0)\|f\|_{L^\infty}+3C_d C,
\end{array}
\end{equation} 
where $C=C_d(C_b+\sqrt{C_m}+1)$ is the constant from Lemma \ref{l2}. Since for $x\in\Omega\setminus\Omega_{r_0/2}$, we have that 
\begin{equation}\label{maine2}
|\nabla u(x)|\le C_d\|f\|_{L^\infty}+\frac{4d}{r_0}\|u\|_{L^\infty},
\end{equation} 
we obtain that $u$ is Lipschitz and
\begin{equation}\label{maine3}
\|\nabla u\|_{L^{\infty}}\le (C_d+r_0)\|f\|_{\infty}+C_d\max\left\{C_b+\sqrt{C_m}+1,\,\frac{\|u\|_\infty}{r_0}\right\}.
\end{equation}
\end{proof}


\bigskip
{\small\noindent
Dorin Bucur:
Laboratoire de Math\'ematiques (LAMA),
Universit\'e de Savoie\\
Campus Scientifique,
73376 Le-Bourget-Du-Lac - FRANCE\\
{\tt dorin.bucur@univ-savoie.fr}\\
{\tt http://www.lama.univ-savoie.fr/$\sim$bucur/}

\bigskip\noindent
Dario Mazzoleni:
Dipartimento di Matematica, Universit\`a degli Studi di Pavia\\
Via Ferrata, 1, 27100 Pavia - ITALY \\
{\texttt{dario.mazzoleni@unipv.it}} \\
Department Mathematik, Friederich-Alexander Universit\"at Erlangen-N\"urnberg \\
Cauerstrasse,11, 91058 Erlangen - GERMANY\\
{\texttt{mazzoleni@math.fau.de}}

\bigskip\noindent
Aldo Pratelli:
Department Mathematik, Friederich-Alexander Universit\"at Erlangen-N\"urnberg \\
Cauerstrasse,11, 91058 Erlangen - GERMANY\\
{\texttt{pratelli@math.fau.de}}

\bigskip\noindent
Bozhidar Velichkov:
Scuola Normale Superiore di Pisa\\
Piazza dei Cavalieri 7, 56126 Pisa - ITALY\\
{\tt b.velichkov@sns.it}

\end{document}